\documentclass[12pt]{amsart}

\usepackage{amssymb}
\usepackage{verbatim}
\usepackage[toc,page]{appendix}
\usepackage{mathrsfs}




\newtheorem{thm}{Theorem}[section]

\newtheorem{lem}[thm]{Lemma}
\newtheorem{cor}[thm]{Corollary}

\theoremstyle{definition}

\theoremstyle{remark}

\newtheorem*{rem}{Remark}

\numberwithin{equation}{section}


\newcommand{\C}{\mathbf{C}}

\newcommand{\Mod}[1]{\ (\textup{mod}\ #1)}

\providecommand{\sgn}{\operatorname{sgn}}

\providecommand{\sgn}{\operatorname{sgn}}
\providecommand{\sym}{\operatorname{sym}}

\DeclareMathOperator{\arcsinh}{arcsinh}
\DeclareMathOperator{\artanh}{artanh}


\begin{document}

\title[On the mean value of symmetric square $L$-functions]{On the mean value of symmetric square $L$-functions}

\author{Olga  Balkanova}
\address{University of Turku, Department of Mathematics and Statistics, Turku, 20014, Finland}
\email{olgabalkanova@gmail.com}
\thanks{Research of the first author is supported by Academy of Finland project no. $293876$.}

\author{Dmitry  Frolenkov}
\address{Steklov Mathematical Institute of Russian Academy of Sciences,  8 Gubkina st., Moscow, 119991, Russia}
\email{frolenkov@mi.ras.ru}
\begin{abstract}
This paper studies the first moment of symmetric-square $L$-functions at the critical point  in the weight aspect. Asymptotics with the best known error term $O(k^{-1/2})$ were obtained independently by Fomenko in 2005 and by Sun in 2013.
We prove that there is an extra main term of size $k^{-1/2}$ in the asymptotic formula and show that the remainder term decays exponentially in $k$. The twisted first moment was evaluated asymptotically by Ng Ming Ho with the error bounded by $lk^{-1/2+\epsilon}$. We improve the error bound to $l^{5/6+\epsilon}k^{-1/2+\epsilon}$ unconditionally and to $l^{1/2+\epsilon}k^{-1/2}$ under the Lindel\"{o}f hypothesis for quadratic Dirichlet $L$-functions. 
\end{abstract}

\keywords{symmetric square L-functions; weight aspect; Gauss hypergeometric function; Liouville-Green method; WKB approximation.}
\subjclass[2010]{Primary: 11F12; Secondary: 33C05, 34E05, 34E20.}

\maketitle

\tableofcontents


\section{Introduction}

Asymptotic behavior of high moments of $L$-functions within different families can be predicted using random matrix theory \cite{CFKRS} or multiple Dirichlet series \cite{DGH}. However, obtaining asymptotic formulas with sharp error bounds is a hard problem even in case of small moments.

One of the most challenging families is symmetric square $L$-functions in weight aspect. Gelbart and Jacquet \cite{GJ} proved that these are $L$-function attached to $GL(3)$ cusp forms.

Despite numerous efforts, even an upper bound for the second moment of symmetric square $L$-functions remains an open problem. See Conjecture $1.2$ \cite{K2}. 

The first moment  has been studied intensively during the last decades. See \cite{F,K,KS,Lau,NMH,Sun}. Nethertheless, even the best known asymptotic error estimates do not appear to be sharp.

The present paper aims to optimize error bounds in existing asymptotic formulas. 
With this goal, we  prove an exact formula for the twisted first moment of symmetric square $L$-functions, and apply  the Liouville-Green method (also called WKB approximation) to estimate remainder terms. This technique, originating from the theory of approximation of second-order differential equations, is quite unusual for analytic number theory, yet very effective. See, for example, \cite{BF, Zav}.

\section{Main results}
Let $S_{2k}(1)$ denote the space of holomorphic cusp forms of weight $2k\geq 2$ with respect to the full modular group. Denote by $H_{2k}$ the normalized Hecke basis for $S_{2k}(1)$. Every $f \in H_{2k}$ has a Fourier expansion of the form
\begin{equation}
f(z)=\sum_{n\geq 1}\lambda_f(n)n^{k-1/2}\exp(2\pi inz),
\end{equation}
\begin{equation}
\lambda_f(1)=1.
\end{equation}
For $\Re{s}>1$ the associated symmetric square $L$-function is given by
\begin{equation}
L(\sym^2f,s)=\zeta(2s)\sum_{n=1}^{\infty}\frac{\lambda_f(n^2)}{n^s}.
\end{equation}
Let $\Gamma(s)$ be the Gamma function and
\begin{equation}
L_{\infty}(s):=\pi^{-3s/2}\Gamma\left(\frac{s+1}{2}\right)\Gamma\left(\frac{s+k-1}{2}\right)\Gamma\left(\frac{s+k}{2}\right).
\end{equation}
Shimura \cite{S} showed that the completed $L$-function
\begin{equation*}
\Lambda(\sym^2f,s):=L_{\infty}(s)L(\sym^2f,s)
\end{equation*}
is entire and satisfies the functional equation
\begin{equation}
\Lambda(\sym^2f,s)=\Lambda(\sym^2f,1-s).
\end{equation}

Consider
\begin{equation}
M_1(l,s):=\sum_{f \in H_{2k}}^{h}\lambda_f(l^2)L(\sym^2f,s).
\end{equation}
The subscript $h$ in the formula above indicates that the expression in the sum is multiplied by the harmonic weight
$\Gamma(2k-1)/((4\pi)^{2k-1} \langle f,f\rangle_1)$, where $\langle f,f\rangle_1$ is the Petersson inner product on the space of level $1$ holomorphic modular forms.

Denote by $\gamma$ the Euler constant and by
$\psi(s)$ logarithmic derivative of the Gamma function.
Let ${}_2F_{1}(a,b,c;x)$ be the Gauss hypergeometric function and
\begin{equation}\label{defphi}
\Phi_k(x):=\frac{\Gamma(k-1/4)\Gamma(3/4-k)}{\Gamma(1/2)}{}_2F_{1}\left(k-\frac{1}{4},\frac{3}{4}-k,1/2;x \right),
\end{equation}
\begin{equation}
\Psi_k(x):=x^k\frac{\Gamma(k-1/4)\Gamma(k+1/4)}{\Gamma(2k)}{}_2F_{1}\left(k-\frac{1}{4},k+\frac{1}{4},2k;x \right).
\end{equation}
We prove the following exact formula for the twisted first moment.
\begin{thm}\label{thm:explicitformula}
For any $l \geq 1$ one has
\begin{multline}\label{mainformula}
M_1(l,1/2)=\frac{1}{2\sqrt{l}}\biggl(-2\log{l}-3\log{2\pi}+\frac{\pi}{2}+4\gamma+\psi(1)+\\ \psi(k-1/4)+ \psi(k+1/4) \biggr)+
\frac{\sqrt{2\pi}(-1)^k}{2\sqrt{l}}\frac{\Gamma(k-1/4)}{\Gamma(k+1/4)}\mathscr{L}_{-4l^2}(1/2)+\\
\frac{1}{\sqrt{l}} \sum_{1\leq n<2l}
\mathscr{L}_{n^2-4l^2}(1/2)\Phi_k\left(\frac{n^2}{4l^2}\right)+
\frac{1}{l\sqrt{2}} \sum_{n>2l}\mathscr{L}_{n^2-4l^2}(1/2)\sqrt{n}\Psi_k\left( \frac{4l^2}{n^2}\right),
\end{multline}
where
\begin{equation}\label{Lbyk}
\mathscr{L}_n(s)=\frac{\zeta(2s)}{\zeta(s)}\sum_{q=1}^{\infty}\frac{1}{q^s}\left( \sum_{1\leq t \leq 2q;t^2 \equiv n \Mod{4q}}1\right).
\end{equation}
\end{thm}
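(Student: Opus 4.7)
The plan is to apply Petersson's trace formula and then carry out a Mellin--Barnes evaluation of the resulting integral transforms in closed form. For $\Re s > 1$, I would substitute the Dirichlet series $L(\sym^2 f, s) = \zeta(2s) \sum_{n \geq 1} \lambda_f(n^2) n^{-s}$ and interchange summation to obtain
\begin{equation*}
M_1(l,s) = \zeta(2s) \sum_{n \geq 1} \frac{1}{n^s} \sum_{f \in H_{2k}}^{h} \lambda_f(l^2)\lambda_f(n^2).
\end{equation*}
Petersson's trace formula in level $1$, weight $2k$, decomposes the inner sum as $\delta_{n=l} + 2\pi(-1)^k \sum_{c} c^{-1} S(l^2,n^2;c)\, J_{2k-1}(4\pi ln/c)$. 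The diagonal yields $\zeta(2s) l^{-s}$, and the off-diagonal a Kloosterman--Bessel series; both pieces will be meromorphically continued to $s = 1/2$, where their poles must cancel.

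The diagonal $\zeta(2s)l^{-s}$ has a simple pole at $s = 1/2$ whose Laurent expansion contributes $(\gamma - \tfrac12 \log l)/\sqrt{l}$ to the constant term. The remaining pieces of the first main term in \eqref{mainformula} (the additional $-\log l/\sqrt l$, and the $\psi(1)$, $\psi(k \pm 1/4)$, $-3\log 2\pi$, $\pi/2$ contributions) must therefore emerge as the regular part at $s = 1/2$ of the off-diagonal sum, which by the preceding remark carries the cancelling pole.

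For the off-diagonal contribution, the crucial manipulation is the Sali\'e-type evaluation of Kloosterman sums with square arguments, which rewrites $S(l^2,n^2;c)$ as a sum over $t \pmod{2c}$ with $t^2 \equiv (2l\pm n)^2 \pmod{4c}$. After combining the two signs and swapping the order of summation, the $c$-sum collapses to exactly $\mathscr{L}_{n^2-4l^2}(s)$ from \eqref{Lbyk}. What remains is to evaluate, for each $n$, the $s$-integral built from the Mellin representation of $J_{2k-1}(4\pi ln/c)$ and the factors $\zeta(2s) n^{-s}$: this is a Barnes-type integral of a product of Gamma functions against $(n/2l)^{\pm 2s}$, which by Gauss's summation theorem reduces to ${}_2F_1$. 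Accordingly, the integral equals $\Phi_k(n^2/4l^2)$ times the explicit constant when $n < 2l$ (argument in $[0,1)$) and, after a Kummer transformation, $\Psi_k(4l^2/n^2)$ times the explicit constant when $n > 2l$. The isolated term $\mathscr{L}_{-4l^2}(1/2)$ — formally the ``$n = 0$'' contribution — is produced by the residue at a pole crossed when shifting the Mellin contour in the boundary regime, and its weight $(-1)^k \Gamma(k - 1/4)/\Gamma(k + 1/4)$ is exactly that residue.

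The principal obstacle is the Mellin--Barnes step: evaluating the Bessel transform in closed form uniformly across the three regimes $n = 0$, $1 \le n < 2l$, $n > 2l$, and verifying that the off-diagonal sum has a pole at $s = 1/2$ that cancels the diagonal pole with precisely the digamma and logarithm combination stated. Secondary technicalities include justifying the interchange of sums by working in a half-plane of absolute convergence and continuing meromorphically, and correctly tracking the signs $(-1)^k$ and phase factors through the Sali\'e reduction of the square-argument Kloosterman sums.
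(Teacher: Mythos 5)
Your overall skeleton (Petersson's formula, a Mellin--Barnes representation of the Bessel function, reduction of the Kloosterman piece to $\mathscr{L}_{n^2-4l^2}$, hypergeometric evaluation, and analytic continuation to $s=1/2$) agrees with the paper's, but two of the steps as you describe them would fail. First, the arithmetic step. You propose a pointwise ``Sali\'e-type'' evaluation of $S(l^2,n^2;c)$ as a count of $t\Mod{2c}$ with $t^2\equiv(2l\pm n)^2\Mod{4c}$. No such pointwise identity holds for ordinary Kloosterman sums in the form stated (already $S(1,1;3)=-1$, whereas such counts are nonnegative); it is Sali\'e sums, which carry a quadratic character, that admit closed evaluations. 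What the paper actually proves and uses is
\begin{equation*}
\sum_{q\ge 1}\frac{1}{q^{1+s}}\sum_{c\Mod{q}}S(l^2,c^2;q)\,e\!\left(\frac{nc}{q}\right)=\frac{\mathscr{L}_{n^2-4l^2}(s)}{\zeta(2s)},
\end{equation*}
an evaluation of the \emph{complete} additively twisted sum over $c\Mod{q}$, obtained by completing the square. To arrive at this configuration one must first group the original summation variable into residue classes mod $q$, recognize a Hurwitz--Lerch zeta function, and apply the Lerch functional equation --- in effect Poisson summation. The $n$ appearing in \eqref{mainformula} is this \emph{dual} frequency variable, which is why the $\Phi_k$-sum is finite and the $\Psi_k$-sum converges; if $n$ were the original variable of $\sum_n\lambda_f(n^2)n^{-s}$, as your sketch suggests, neither feature could occur (that series does not even converge at $s=1/2$). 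Your reading of the $\mathscr{L}_{-4l^2}$ term as a zero-frequency residue is correct in spirit: it is the residue of the Lerch zeta function at $t=1-s$ picked up when shifting the contour.

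Second, the main term. You rightly say the pole of the diagonal $\zeta(2s)l^{-s}$ must be cancelled by the off-diagonal, but you never locate that pole, and your list of regimes ($n=0$, $1\le n<2l$, $n>2l$) omits exactly the term that carries it: $n=2l$. There $n^2-4l^2=0$, so $\mathscr{L}_0(s)=\zeta(2s-1)$, and the Bessel transform at the boundary argument evaluates, via Gauss's theorem for ${}_2F_1$ at argument $1$, to a multiple of $\Gamma(s-1/2)$; the product has a simple pole at $s=1/2$ cancelling the diagonal, and the whole first bracket of \eqref{mainformula} --- the $\psi(k\pm1/4)$, $-3\log 2\pi$, $\pi/2$, and half of the $-2\log l$ --- is the constant term in the combined Laurent expansion of these two terms. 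Without isolating $n=2l$ you cannot produce the first main term. Relatedly, Gauss's summation theorem, which you invoke for the generic $n$, applies only at this boundary point; for $n\ne 2l$ the Barnes integral is evaluated by shifting the contour and summing residues into a ${}_2F_1$ series (to the right for $n<2l$, to the left for $n>2l$).
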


A similar formula, where the last two summands are expressed in terms of Legendre function of the first kind, was established by a different method by Zagier \cite[Theorem~1]{Z}. Zagier's formula was applied by Kohnen and Sengupta \cite{KS} to prove an upper bound for $M_1(1,1/2)$, by Fomenko \cite{F} to obtain an asymptotic formula for $M_1(1,1/2)$, and by Luo \cite{L} to estimate the second moment of $L(\sym^2f,1/2)$ over short intervals.

The proof of Theorem \ref{thm:explicitformula} is quite simple and make use of Petersson's trace formula and the functional equation for the Lerch zeta function. 

When $l=1$, exact formula \eqref{mainformula} allows isolating the second main term of size $k^{-1/2}$ in the asymptotic formula so that the remainder term decays exponentially.

\begin{cor}\label{thm:asympformula2}
For some $c>0$ one has
\begin{multline}\label{mainformula3}
M_1(1,1/2)=\frac{1}{2}\biggl(-3\log{2\pi}+\frac{\pi}{2}+4\gamma+\psi(1)+\\ \psi(k-1/4)+ \psi(k+1/4) \biggr)+
\frac{\sqrt{2\pi}(-1)^k}{2}\frac{\Gamma(k-1/4)}{\Gamma(k+1/4)}L(1/2,\chi_{-4})+\\ \Phi_k\left(\frac{1}{4}\right)L(1/2,\chi_{-3})+O\left(\frac{1}{\sqrt{k}}exp(-ck)\right),
\end{multline}
where $L(1/2,\chi_D)$ is a Dirichlet $L$-function for primitive quadratic character of conductor $D$.
\end{cor}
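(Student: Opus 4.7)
The plan is to specialize Theorem~\ref{thm:explicitformula} at $l=1$ and to bound the resulting tail sum over $n>2$. With $l=1$ the middle sum $\sum_{1\leq n<2l}$ collapses to the single term $n=1$, contributing $\mathscr{L}_{-3}(1/2)\,\Phi_k(1/4)$; the isolated $n=2l$ summand becomes the $\mathscr{L}_{-4}(1/2)$ term; and the first line of \eqref{mainformula} matches the corresponding line of \eqref{mainformula3} upon substituting $\log 1 = 0$.

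The first reduction is to identify $\mathscr{L}_{-3}(1/2)$ and $\mathscr{L}_{-4}(1/2)$ with the claimed Dirichlet $L$-values. For each fundamental discriminant $D \in \{-3,-4\}$, a direct Dirichlet-convolution calculation gives
\[
\sum_{q \geq 1} \frac{\#\{t \bmod 2q : t^2 \equiv D \Mod{4q}\}}{q^s}
= \frac{\zeta(s)\,L(s, \chi_D)}{\zeta(2s)},
\]
so the prefactor $\zeta(2s)/\zeta(s)$ in \eqref{Lbyk} cancels and $\mathscr{L}_D(s) = L(s, \chi_D)$. Combined with the specialization above, this reproduces the three explicit lines of \eqref{mainformula3}.

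The main step is the tail estimate. Applying Euler's integral representation to the Gauss hypergeometric function in $\Psi_k$ yields
\[
\Psi_k(x) = x^k \int_0^1 t^{k-3/4}(1-t)^{k-5/4}(1-tx)^{-(k-1/4)}\,dt.
\]
The trivial monotonicity $1-tx \geq 1-x$ on $(0,1)$ combined with the beta integral and the duplication--Stirling asymptotic $\Gamma(k-1/4)\Gamma(k+1/4)/\Gamma(2k) \asymp 4^{-k}/\sqrt{k}$ produces
\[
\Psi_k(4/n^2) \ll \frac{1}{\sqrt{k}\,(n^2-4)^{k}} \qquad (n \geq 3).
\]
Any polynomial-in-$n$ bound on $\mathscr{L}_{n^2-4}(1/2)$ (for instance via the factorisation of $\mathscr{L}_n$ as $L(\cdot,\chi_D)$ times a short local factor indexed by the square part of $n$) then forces the tail to be dominated by its $n=3$ entry, of size $O(k^{-1/2}\,5^{-k})$; this is $O(k^{-1/2}\exp(-ck))$ with $c = \log 5$, matching the claimed remainder.

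The main obstacle is confirming that the crude inequality $1-tx \geq 1-x$ does not destroy the exponential decay. Because $x = 4/n^2 \leq 4/9$ stays uniformly away from the hypergeometric singularity at $x=1$, the Laplace maximum of the integrand sits at an interior point and the beta-style inequality matches the true Laplace rate up to polynomial factors in $k$ (absorbed into $k^{-1/2}$); consequently the heavier Liouville--Green apparatus developed elsewhere in the paper is not needed for this corollary.
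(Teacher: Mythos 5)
Your proposal is correct, but the tail estimate is handled by a genuinely different and more elementary route than the paper's. The paper bounds the sum over $n>2l$ by invoking its Liouville--Green approximation of $\Psi_k$ (Theorem \ref{thm:approxPsi}), reducing to the exponential decay of $K_0(u\sqrt{\xi})$ for $u\sqrt{\xi}\gg k$; you instead use Euler's integral representation of ${}_2F_1$ together with the trivial bound $1-tx\ge 1-x$ and the beta integral, which for $l=1$ gives $\Psi_k(4/n^2)\ll k^{-1/2}(n^2-4)^{-(k-1/4)}$ and hence an $O(k^{-1/2}5^{-k})$ tail. This is a legitimate shortcut precisely because $x=4/n^2\le 4/9$ stays away from the singularity at $x=1$; for general $l$ (the setting of Corollary \ref{thm:asympformula}) the same crude bound degenerates to roughly $(l/4)^k$ near $n=2l+1$, which is why the paper develops the heavier uniform machinery. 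What the paper's method buys is uniformity in $l$ and the sharp decay rate; what yours buys is a short, self-contained argument adequate for the statement "for some $c>0$". Two minor inaccuracies to fix: the $\mathscr{L}_{-4l^2}(1/2)$ term in \eqref{mainformula} arises from the residue at $t=1-s$ (the "$n=0$" contribution), not from an "$n=2l$ summand" --- the $n=2l$ term is the one producing $\mathscr{L}_0=\zeta(2s-1)$, whose pole cancels against $\zeta(2s)l^{-s}$ to form the first line; and your closing remark that the inequality $1-tx\ge 1-x$ "matches the true Laplace rate up to polynomial factors" is false --- the true rate of the $n=3$ term is $\bigl((7+3\sqrt{5})/2\bigr)^{-k}\approx 6.85^{-k}$, so the crude bound loses an exponential factor; this is harmless here since $5^{-k}$ still yields $O(k^{-1/2}\exp(-ck))$ with $c=\log 5$, but the justification as written is not accurate.
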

\begin{rem}
After posting the first version of this paper to the arXiv, the authors have been informed by Shenhui Liu that
he has independently obtained an asymptotic formula similar to \eqref{mainformula3} by using an approximate functional equation. See \cite{Liu}.
\end{rem}

Corollary \ref{thm:asympformula2} improves the series of previously known results with the following error bounds:
\begin{itemize}
\item $k^{-0.008}$ proved by Lau \cite{Lau} in 2002;
\item $k^{-1/20}$ proved by  Khan \cite{K} in 2007;
\item $k^{-1/2}$ proved by Fomenko \cite{F} in 2005 and by Sun \cite{Sun} in 2013.
\end{itemize}

\begin{cor}\label{thm:asympformula}
For any $\epsilon>0$, $l >1$ one has
\begin{multline}\label{mainformula2}
M_1(l,1/2)=\frac{1}{2\sqrt{l}}\biggl(-2\log{l}-3\log{2\pi}+\frac{\pi}{2}+4\gamma+\psi(1)+\\ \psi(k-1/4)+ \psi(k+1/4) \biggr)+O\left(\frac{l^{5/6+\epsilon}}{\sqrt{k}}\right).
\end{multline}
Assuming the Lindel\"{o}f hypothesis for quadratic Dirichlet $L$-functions, the error term above can be replaced by
$O\left(l^{1/2+\epsilon}k^{-1/2}\right).$

\end{cor}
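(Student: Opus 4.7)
The plan is to insert the exact formula \eqref{mainformula} and show that every contribution beyond the first-line logarithmic and $\psi$-function piece is $O(l^{5/6+\epsilon}k^{-1/2})$ unconditionally, and $O(l^{1/2+\epsilon}k^{-1/2})$ under Lindelöf. Three off-diagonal objects must be controlled: the isolated $\mathscr{L}_{-4l^2}(1/2)$ term, the finite oscillatory sum over $1\leq n<2l$, and the tail sum over $n>2l$ involving $\Psi_k$.

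The arithmetic ingredient is a pointwise estimate for $\mathscr{L}_n(1/2)$. From \eqref{Lbyk}, the inner count of square roots modulo $4q$ is multiplicative, and $\mathscr{L}_n(s)$ factors as $L(s,\chi_{n'})$ times a benign Euler product supported on the square part of $n$, where $n'$ denotes the fundamental discriminant attached to $n$. The Conrey--Iwaniec subconvexity bound $L(1/2,\chi_D)\ll |D|^{1/6+\epsilon}$ for real primitive characters then yields $\mathscr{L}_{n^2-4l^2}(1/2)\ll l^{1/3+\epsilon}$ uniformly in $n$, and $\ll l^\epsilon$ under the Lindelöf hypothesis for quadratic Dirichlet $L$-functions.

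The analytic ingredient is a uniform bound on the hypergeometric factors, obtained via the Liouville--Green (WKB) method applied to the Gauss equation
\[
x(1-x)y'' + \bigl(\tfrac12-\tfrac32 x\bigr)y' + (k-\tfrac14)(k-\tfrac34)\,y = 0
\]
satisfied by ${}_2F_1(k-1/4,3/4-k,1/2;x)$, and to the analogous equation for the hypergeometric factor inside $\Psi_k$. Writing each ODE in Liouville normal form $u''+k^2 Q(x)u+\ldots=0$ with $Q(x)=1/(x(1-x))>0$ on $(0,1)$ places us in a purely oscillatory regime with no interior turning points, so Olver-type estimates yield WKB amplitudes proportional to $Q(x)^{-1/4}$. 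Combined with Stirling applied to the Gamma-quotient prefactor in \eqref{defphi} (which already contributes $k^{-1/2}$), this delivers $\Phi_k(x)\ll k^{-1/2}$ uniformly on $(0,1-\delta]$ and, by a parallel analysis, $\Psi_k(x)\ll k^{-1/2}x^{k}$.

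Assembling the pieces: the isolated $\mathscr{L}_{-4l^2}(1/2)$ term contributes $l^{-1/2}\cdot k^{-1/2}\cdot l^{1/3+\epsilon}\ll l^{-1/6+\epsilon}k^{-1/2}$; the middle sum has $O(l)$ terms each of size $l^{1/3+\epsilon}k^{-1/2}$, totalling $\ll l^{5/6+\epsilon}k^{-1/2}$ after the $1/\sqrt{l}$; and the tail is dominated by the factor $(4l^2/n^2)^{k}$ in $\Psi_k$, which forces super-polynomial decay once $n\geq 2l(1+c/\sqrt{k})$ and so contributes no more than the middle sum. Under Lindelöf the bound $l^{1/3+\epsilon}$ is replaced by $l^\epsilon$, yielding the refined $l^{1/2+\epsilon}k^{-1/2}$. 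The main obstacle is making the WKB analysis uniform up to the endpoint $x=1$: the relation $c-a-b=0$ places ${}_2F_1(k-1/4,3/4-k,1/2;x)$ precisely on the logarithmic threshold at $x=1$, and the middle sum does include the term $n=2l-1$ for which $n^2/(4l^2)=1-O(1/l)$. Making the Liouville--Green estimate valid in this boundary strip requires matching the outer WKB solution to a local Frobenius expansion near $x=1$ and verifying that the emerging logarithmic factor is dominated by the $k^{-1/2}$ amplitude.
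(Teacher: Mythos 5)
Your overall architecture coincides with the paper's (exact formula, Conrey--Iwaniec subconvexity for $\mathscr{L}_d(1/2)$, Liouville--Green for the hypergeometric factors), but there is a genuine gap at the decisive point, and it is exactly the point you defer at the end. The uniform bound $\Phi_k(x)\ll k^{-1/2}$ on $(0,1-\delta]$ with fixed $\delta$ does not cover the sum over $1\le n<2l$, whose arguments $n^2/4l^2$ reach up to $1-c/l$; and in that boundary strip the bound is simply false. The Liouville--Green/Bessel analysis (Theorem \ref{thm:apprphi}) gives, with $\xi\asymp 1-x$, $\Phi_k(x)\asymp Y_0(u\sqrt{\xi})$ up to bounded factors, so the envelope is $k^{-1/2}(1-x)^{-1/4}$ in the oscillatory regime $k^2(1-x)\gg 1$, reaching $k^{-1/2}l^{1/4}$ at the extreme term, and degenerates to a logarithm of size $\log\bigl(1/(k^2(1-x))\bigr)$ --- not anything ``dominated by the $k^{-1/2}$ amplitude'' --- when $k^2(1-x)\ll 1$ (which is why the paper quietly restricts to $l\ll k^2$). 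So the verification you postpone would fail as stated. The estimate $l^{5/6+\epsilon}k^{-1/2}$ nevertheless survives, but only because the $(1-x_n)^{-1/4}\asymp (l/m)^{1/4}$ loss at the $m$-th term from the edge is summable: $\sum_{m\le 2l}(l/m)^{1/4}\ll l$, so the total is still $l^{-1/2}\cdot l^{1/3+\epsilon}k^{-1/2}\cdot l=l^{5/6+\epsilon}k^{-1/2}$. That computation (or the paper's variant, which instead pairs the sharper pointwise bound $\mathscr{L}_{n^2-4l^2}(1/2)\ll((2l-n)(2l+n))^{1/6+\epsilon}$ with $\Phi_k\ll k^{-1/2}(n/l)^{-1/4}$ and sums $n^{1/6-1/4}$) is the actual content of the proof and is missing from your argument.

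Two smaller inaccuracies: the claim that $\mathscr{L}_{n^2-4l^2}(1/2)\ll l^{1/3+\epsilon}$ ``uniformly in $n$'' is only true for $n<2l$; in the tail it grows like $n^{1/3+\epsilon}$ and must be absorbed by the decay of $\Psi_k$ (harmless, but should be said). And the $\Psi_k$ equation is not in the ``purely oscillatory regime'': after normalization its coefficient of $u^2$ has the opposite sign, the relevant recessive solution is of $K$-Bessel type, and the correct statement is $\Psi_k(4l^2/n^2)\ll k^{-1/2}(1-4l^2/n^2)^{-1/4}\exp(-2u\acosh(n/2l))$, which is what yields the paper's exponentially small tail $\ll l^{-1/12}k^{-1/2}\exp(-ck/\sqrt{l})$; your $k^{-1/2}x^k$ again misses the $(1-x)^{-1/4}$ near $x=1$, though this too washes out in the final power of $l$.
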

This improves the error bound $lk^{-1/2+\epsilon}$ proved by Ng Ming Ho \cite[Theorem~2.1.1]{NMH}.

\section{Notations and tools}

Let $e(x)=exp(2\pi ix)$. For $ v \in \C $ let
\begin{equation}
\tau_v(n)=\sum_{n_1n_2=n}\left( \frac{n_1}{n_2}\right)^v.
\end{equation}
The classical Kloosterman sum is defined by
\begin{equation*}
S(n,m;c)=\sum_{\substack{a\Mod{c}\\ (a,c)=1}}e\left( \frac{an+a^*m}{c}\right), \quad aa^*\equiv 1\Mod{c}.
\end{equation*}
\begin{lem}(Weil's bound  \cite{W}) One has
\begin{equation}
|S(m,n;c)|\leq \tau_0(c)\sqrt{(m,n,c)}\sqrt{c}.
\end{equation}
\end{lem}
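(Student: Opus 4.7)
The statement is Weil's classical bound on Kloosterman sums, so any proof sketch is really a road map to a well-known deep result rather than a genuinely novel argument. I would proceed in three stages: reduction to prime power moduli, treatment of the prime case, and treatment of higher prime powers.

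First, I would establish the multiplicativity (``twisted multiplicativity'') of Kloosterman sums: if $c = c_1 c_2$ with $(c_1,c_2)=1$, then
\begin{equation*}
S(m,n;c_1 c_2) = S(m,n\overline{c_2}^{\,2};c_1)\, S(m,n\overline{c_1}^{\,2};c_2),
\end{equation*}
where $\overline{c_i}$ denotes an inverse modulo the other factor. This is a direct change-of-variables argument using the Chinese Remainder Theorem: write the summation variable $a\Mod{c_1 c_2}$ as $a = a_1 c_2 \overline{c_2} + a_2 c_1 \overline{c_1}$, split the exponential, and recognize the two resulting sums. The divisor-function factor $\tau_0(c)$ in the final bound comes precisely from this factorization, so it suffices to establish $|S(m,n;p^k)| \le \sqrt{(m,n,p^k)}\sqrt{p^k}\cdot 2^{?}$ at each prime power, and then take products.

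Second, for $c = p$ prime with $p\nmid(m,n)$, one must prove the central estimate $|S(m,n;p)| \le 2\sqrt{p}$. This is the hard step and the main obstacle. The conceptual proof goes through Weil's theorem on the Riemann hypothesis for curves over finite fields: interpret $S(m,n;p)$ as an exponential sum along the affine curve $xy \equiv 1\Mod{p}$ and express it, via additive character theory, in terms of point counts on the Artin--Schreier cover $y^p - y = mx + n/x$ of genus $1$. Applying Weil's bound $|N - (q+1)| \le 2g\sqrt{q}$ to this curve with $g=1$, $q=p$ delivers $|S(m,n;p)| \le 2\sqrt{p}$. An alternative, entirely elementary route goes through Stepanov's polynomial method, which produces the same bound without appeal to algebraic geometry, but at substantial technical cost.

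Third, for higher prime powers $c = p^k$ with $k \ge 2$, the Kloosterman sum can be evaluated essentially explicitly by a $p$-adic stationary phase argument: one parametrizes the summation variable as $a = a_0 + p^{\lceil k/2\rceil}b$, expands the exponent, and uses Hensel's lemma to identify the ``stationary'' values $a_0$ with $a_0^2 \equiv n\overline{m}\Mod{p^{\lfloor k/2\rfloor}}$; summing the resulting linear characters gives either zero or a Gauss-sum type expression of size $O(\sqrt{p^k})$, again with the correct $\sqrt{(m,n,p^k)}$ factor when $p \mid (m,n)$. The case $p=2$ requires minor adjustments but no new ideas.

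Finally, assembling these three ingredients via the multiplicative structure yields the claimed bound with the divisor function $\tau_0(c)$ absorbing the number of prime power factors. The only step where anything genuinely deep is used is the prime-modulus bound; everything else is bookkeeping with character sums, Chinese remaindering, and $p$-adic expansion.
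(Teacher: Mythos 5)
The paper offers no proof of this lemma at all: it is quoted verbatim from the literature with a citation to Weil, and nothing in the rest of the paper depends on how it is proved. Your roadmap is the standard modern proof (as in Iwaniec--Kowalski, Chapter 11) and is essentially sound: twisted multiplicativity via CRT producing the $\tau_0(c)$ factor, the Riemann hypothesis for curves (or Stepanov's method) at primes, and $p$-adic stationary phase at higher prime powers. One point to correct: the Artin--Schreier curve $y^p-y=mx+n/x$ has genus $p-1$, not $1$, so you cannot apply $|N-(q+1)|\le 2g\sqrt q$ with $g=1$ directly; the bound $|S(m,n;p)|\le 2\sqrt p$ for $p\nmid mn$ comes instead from the factorization of the zeta function of that cover into $L$-functions indexed by the nontrivial additive characters, each of which is a polynomial of degree $2$ whose inverse roots have modulus $\sqrt p$ by the full Riemann hypothesis for the curve. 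With that repair (and the routine check that the cases $p\mid(m,n)$ and $p=2$ contribute the $\sqrt{(m,n,c)}$ factor correctly), your outline assembles into the stated bound; since the paper treats the lemma as a black box, your sketch is strictly more informative than what the paper provides, though of course the prime-modulus step still rests on a deep external input.
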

Let $J_{v}(x)$ be the Bessel function of the first kind.
\begin{lem}(Petersson's trace formula, \cite{P}) For $2k \geq 12$  and integral $l,n \geq 1$ one has
\begin{equation}\label{Pet}
\sum_{f \in H_{2k}}^{h}\lambda_f(l)\lambda_f(n)=\delta_{l,n}+2\pi i^{2k}\sum_{c =1}^{\infty}\frac{S(l,n;c)}{c}J_{2k-1}\left( \frac{4\pi \sqrt{ln}}{c}\right).
\end{equation}
\end{lem}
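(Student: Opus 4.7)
The plan is to prove Petersson's formula by the classical Poincar\'e series method: one constructs a weight $2k$ cusp form $P_m(z)$ whose inner product against any $f \in S_{2k}(1)$ reproduces the $m$-th Fourier coefficient of $f$, then one computes the $n$-th Fourier coefficient of $P_m$ in two different ways and equates the results.

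First, I would define the $m$-th Poincar\'e series
$$P_m(z) = \sum_{\gamma \in \Gamma_{\infty}\backslash \SL_2(\Z)} j(\gamma,z)^{-2k}\, e(m\gamma z),$$
where $j(\gamma,z)=cz+d$ for $\gamma=\begin{pmatrix}a&b\\c&d\end{pmatrix}$ and $\Gamma_{\infty}$ is the stabilizer of the cusp $\infty$. For $2k\geq 4$ the series converges absolutely and defines an element of $S_{2k}(1)$. Unfolding the Petersson inner product against the Fourier expansion of an arbitrary $f\in S_{2k}(1)$ with $f(z)=\sum_{n\geq 1} a_f(n)e(nz)$ and evaluating $\int_0^\infty y^{2k-2}e^{-4\pi m y}\,dy$ yields the reproducing identity
$$\langle f, P_m\rangle_1 = \frac{\Gamma(2k-1)}{(4\pi m)^{2k-1}}\, a_f(m),\qquad a_f(m)=\lambda_f(m)\,m^{k-1/2}.$$

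Second, I would compute the Fourier expansion of $P_m$ directly by parameterizing $\Gamma_{\infty}\backslash\SL_2(\Z)$ by the pair $(c,d)$ with $c\geq 0$ and $(c,d)=1$. The coset $c=0$ contributes $e(mz)$, producing the Kronecker delta $\delta_{m,n}$. For $c\geq 1$, writing the coset representatives as $d\pmod c$ with $ad\equiv 1\pmod c$ and expanding $m\gamma z$, integration of $e(-nz)\,P_m(z)$ over a fundamental interval of length $1$ collapses the sum over $d$ into the Kloosterman sum $S(m,n;c)$ multiplied by the integral
$$\int_{-\infty}^{\infty}(cz)^{-2k}\,e\!\left(-\frac{m}{c^2 z}-nz\right)dz,$$
which by a contour shift and a standard Schl\"afli-type integral representation equals a constant times $c^{-2k+1}(n/m)^{(2k-1)/2}\,J_{2k-1}(4\pi\sqrt{mn}/c)$, accompanied by the phase $2\pi i^{2k}$.

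Finally, expand $P_m$ in the Hecke basis $H_{2k}$: each spectral coefficient is $\langle P_m,f\rangle_1/\langle f,f\rangle_1$, which by the reproducing identity equals $\Gamma(2k-1)\lambda_f(m)m^{k-1/2}/((4\pi m)^{2k-1}\langle f,f\rangle_1)$. Reading off the $n$-th Fourier coefficient of this spectral expansion, equating with the direct Bruhat-cell computation, and cancelling the common factor $(mn)^{k-1/2}$ reproduces exactly the stated identity with the harmonic weight $\Gamma(2k-1)/((4\pi)^{2k-1}\langle f,f\rangle_1)$. The main obstacle is the evaluation of the oscillatory integral above: it requires a careful contour deformation from the real axis to recognize one of the classical Bessel integral representations of $J_{2k-1}$, and the phase $i^{2k}=(-1)^k$ must be tracked accurately through the change of variables since it governs the sign appearing in front of the Kloosterman-Bessel series.
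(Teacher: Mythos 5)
The paper offers no proof of this lemma: it is quoted as a classical result with a citation to Petersson, so there is nothing internal to compare your argument against. Your outline is the standard Poincar\'e-series proof and is essentially correct: the reproducing identity $\langle f,P_m\rangle_1=\Gamma(2k-1)(4\pi m)^{1-2k}a_f(m)$, the Bruhat-cell computation of the $n$-th Fourier coefficient of $P_m$ (identity coset giving $\delta_{m,n}$, the $c\geq 1$ cells giving $S(m,n;c)$ times the oscillatory integral that evaluates to $2\pi i^{-2k}(n/m)^{(2k-1)/2}c^{-1}J_{2k-1}(4\pi\sqrt{mn}/c)$, where $i^{-2k}=i^{2k}$ for integral $k$), and the spectral expansion over the Hecke basis. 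Three small points you should make explicit if you write this up in full: (i) the factor that cancels at the end is $(n/m)^{k-1/2}$, not $(mn)^{k-1/2}$ --- it arises on the spectral side as $(mn)^{k-1/2}/m^{2k-1}$ and on the geometric side from the Bessel integral, and the bookkeeping must match; (ii) the spectral coefficients involve $\overline{a_f(m)}$, so you need that Hecke eigenvalues for level $1$ are real; (iii) the interchange of the $x$-integration with the sums over $c$ and $d$, and the absolute convergence of the final Kloosterman--Bessel series, need justification --- the trivial bound $|S(m,n;c)|\leq c$ together with $J_{2k-1}(x)\ll x^{2k-1}$ suffices once $2k\geq 4$, so the hypothesis $2k\geq 12$ in the statement is not needed for the identity itself (it merely guarantees $S_{2k}(1)\neq\{0\}$).
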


The Lerch zeta function
\begin{equation}
\zeta(\alpha,\beta,s)=\sum_{n+\alpha>0}\frac{e(n\beta)}{(n+\alpha)^s}
\end{equation}
was introduced by Lipschitz \cite{Lip} in $1857$ and was named after Lerch, who proved in $1887$ the following functional equation.
\begin{lem}(\cite{Ler})
One has
\begin{equation}\label{LerchFE}
\zeta(\alpha,0,s)=\frac{\Gamma(1-s)}{(2\pi)^{1-s}}\biggl(-ie\left(\frac{s}{4}\right)\zeta(0,\alpha,1-s)+
ie\left(-\frac{s}{4}\right)\zeta(0,-\alpha,1-s)\biggr).
\end{equation}
\end{lem}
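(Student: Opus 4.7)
The plan is to prove Lerch's functional equation by adapting Riemann's classical Hankel-contour proof of the functional equation for $\zeta(s)$. First, I would reduce to the range $0<\alpha\le 1$, so that $\zeta(\alpha,0,s)$ coincides with the Hurwitz zeta function $\sum_{n\ge 0}(n+\alpha)^{-s}$; the general case follows from the translation identity $\zeta(\alpha+1,0,s)=\zeta(\alpha,0,s)-\alpha^{-s}$ together with the $1$-periodicity of $\beta\mapsto\zeta(0,\beta,s)$.

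Starting from the gamma integral $\Gamma(s)\zeta(\alpha,0,s)=\int_0^{\infty}t^{s-1}e^{-\alpha t}(1-e^{-t})^{-1}\,dt$, valid for $\Re s>1$, I would rewrite it as a Hankel-type loop integral
\[
\zeta(\alpha,0,s)=\frac{\Gamma(1-s)}{2\pi i}\int_C\frac{(-z)^{s-1}e^{-\alpha z}}{1-e^{-z}}\,dz,
\]
where $C$ comes in from $+\infty$ above the positive real axis, loops around $0$, and returns to $+\infty$ below, and $(-z)^{s-1}$ carries its principal branch with cut along $[0,\infty)$. This identity already extends $\zeta(\alpha,0,s)$ meromorphically to all of $\C$.

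For $\Re s<0$, I would deform $C$ outward to a sequence of large square contours $C_N$ centered at the origin, passing between the poles of $1/(1-e^{-z})$ at $z=2\pi in$ for $n\in\Z\setminus\{0\}$. Each such pole has residue $(-2\pi in)^{s-1}e(-n\alpha)$; evaluating the principal branch yields a phase proportional to $ie(-s/4)$ for $n>0$ and to $-ie(s/4)$ for $n<0$, each multiplied by $(2\pi|n|)^{s-1}$. Summing the $n>0$ and $n<0$ contributions separately rebuilds the two Lerch series $\zeta(0,\pm\alpha,1-s)$ with exactly the coefficients displayed in the stated identity.

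The last step is to check that the integral over $C_N$ tends to $0$ as $N\to\infty$ when $\Re s<0$: on $C_N$ the denominator satisfies $|1-e^{-z}|\gg 1$ by a standard pole-avoidance estimate, $|(-z)^{s-1}|\ll|z|^{\Re s-1}$, and $|e^{-\alpha z}|$ is controlled on both halves of $C_N$ thanks to $0<\alpha<1$ (on the left half the growth of $e^{-\alpha z}$ is dominated by the factor $|1-e^{-z}|^{-1}$). This proves the functional equation for $\Re s<0$, and it extends to all $s$ by analytic continuation. The main obstacle is the careful bookkeeping of the principal branch of $(-z)^{s-1}$ across the branch cut — this is precisely what produces the two distinct phase factors $\pm ie(\pm s/4)$ — together with verifying the contour-at-infinity estimate uniformly in $s$ on compacta.
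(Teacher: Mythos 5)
The paper offers no proof of this lemma at all: it is quoted verbatim from Lerch's 1887 paper \cite{Ler}, so there is no internal argument to compare against. Your proposal is the classical Hankel--contour proof (the same one found in Whittaker--Watson or Apostol for the Hurwitz formula, and essentially Lerch's own route), and its core is correct. In particular the phase bookkeeping checks out: with the cut of $(-z)^{s-1}$ along $[0,\infty)$, the pole at $z=2\pi i n$ with $n>0$ contributes $(-2\pi in)^{s-1}e(-n\alpha)=(2\pi n)^{s-1}\,i\,e(-s/4)\,e(-n\alpha)$, and the poles with $n<0$ contribute $(2\pi|n|)^{s-1}(-i)e(s/4)e(|n|\alpha)$, which after summation and multiplication by $\Gamma(1-s)$ reproduces exactly the coefficients $-ie(s/4)\zeta(0,\alpha,1-s)+ie(-s/4)\zeta(0,-\alpha,1-s)$ divided by $(2\pi)^{1-s}$. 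The decay of the $C_N$ integral for $\Re s<0$ and the final analytic continuation are also handled correctly.

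One step, however, is wrong as written: the reduction to $0<\alpha\le 1$ via the ``translation identity'' $\zeta(\alpha+1,0,s)=\zeta(\alpha,0,s)-\alpha^{-s}$. That identity holds for the Hurwitz normalization $\sum_{n\ge 0}(n+a)^{-s}$, but the paper defines $\zeta(\alpha,0,s)=\sum_{n+\alpha>0}(n+\alpha)^{-s}$, summing over \emph{all} integers $n$ with $n+\alpha>0$; reindexing $n\mapsto n+1$ shows this function is exactly $1$-periodic in $\alpha$, with no correction term. Your identity would make the left-hand side non-periodic while the right-hand side of \eqref{LerchFE} manifestly is periodic in $\alpha$, so the reduction as you state it is self-contradictory. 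The repair is immediate — both sides are $1$-periodic in $\alpha$, so it suffices to treat $0<\alpha\le 1$, where $\zeta(\alpha,0,s)$ coincides with the Hurwitz zeta function and your Hankel argument applies verbatim. With that one-line correction (and the usual care about the orientation of $C$, which only flips an overall sign that you can absorb into the direction of traversal), the proof is complete.
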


\section{Some properties of $\mathscr{L}_{n}(s)$}

The main references for this section are \cite{B,SY,Z}.
Function \eqref{Lbyk} can be written as follows
\begin{equation}
\mathscr{L}_{n}(s)=\frac{\zeta(2s)}{\zeta(s)}\sum_{q=1}^{\infty}\frac{\rho_q(n)}{q^{s}}=\sum_{q=1}^{\infty}\frac{\lambda_q(n)}{q^{s}},
\end{equation}
where
\begin{equation}
\rho_q(n):=\#\{x\Mod{2q}:x^2\equiv n\Mod{4q}\},
\end{equation}
\begin{equation}
\lambda_q(n):=\sum_{q_{1}^{2}q_2q_3=q}\mu(q_2)\rho_{q_3}(n).
\end{equation}
For a fixed $n$ both $\rho_q(n)$ and $\lambda_q(n)$ are multiplicative functions of $q$. Furthermore,
for $n \equiv 2,3 \Mod{4}$ the function $\rho_q(n)$ is identically zero. Therefore, $\mathscr{L}_n(s)$ does not vanish only for $n \equiv 0,1 \Mod{4}.$ If $n=0$ then
\begin{equation}
\mathscr{L}_{n}(s)=\zeta(2s-1).
\end{equation}
Otherwise, for $n=Dl^2$ with $D$ fundamental discriminant we have
\begin{equation}\label{ldecomp}
\mathscr{L}_{n}(s)=l^{1/2-s}T_{l}^{(D)}(s)L(s,\chi_D),
\end{equation}
where $L(s,\chi_D)$ is a Dirichlet L-function for primitive quadratic character $\chi_D$ and
\begin{equation}\label{eq:td}
T_{l}^{(D)}(s)=\sum_{l_1l_2=l}\chi_D(l_1)\frac{\mu(l_1)}{\sqrt{l_1}}\tau_{s-1/2}(l_2).
\end{equation}

The completed $L$-function
\begin{equation}
\mathscr{L}_{n}^{*}(s)=(\pi/|n|)^{-s/2}\Gamma(s/2+1/4-\sgn{n}/4)\mathscr{L}_{n}(s)
\end{equation}
satisfies the functional equation
\begin{equation}\label{functlstar}
\mathscr{L}_{n}^{*}(s)=\mathscr{L}_{n}^{*}(1-s).
\end{equation}
\begin{lem}
One has
\begin{equation}\label{eq:sumofklsums}
\sum_{q=1}^{\infty}\frac{1}{q^{1+s}}\sum_{c\Mod{q}}S(l^2,c^2;q)e\left(\frac{nc}{q}\right)=\frac{1}{\zeta(2s)}
\mathscr{L}_{n^2-4l^2}(s).
\end{equation}
\end{lem}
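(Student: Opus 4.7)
The plan is to open up the Kloosterman sum, perform a change of variables to obtain a Ramanujan sum, and then recognize the resulting counting function as $\rho_q(n^2-4l^2)$.

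First I would expand
\begin{equation*}
S(l^2,c^2;q)=\sum_{\substack{a\Mod{q}\\ (a,q)=1}}e\left(\frac{al^2+\bar{a}c^2}{q}\right)
\end{equation*}
and exchange the $a$- and $c$-sums. For fixed $a$ coprime to $q$, I would substitute $c=ab$ (a bijection on $\Z/q\Z$), which transforms $\bar{a}c^2+nc$ into $a(b^2+nb)$. Combining with $e(al^2/q)$ gives
\begin{equation*}
\sum_{c\Mod q}S(l^2,c^2;q)e\left(\frac{nc}{q}\right)=\sum_{b\Mod q}\sum_{\substack{a\Mod{q}\\ (a,q)=1}}e\left(\frac{a(b^2+nb+l^2)}{q}\right)=\sum_{b\Mod q}c_q(b^2+nb+l^2),
\end{equation*}
where $c_q(m)$ is the Ramanujan sum.

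Next I would use the classical identity $c_q(m)=\sum_{d\mid(q,m)}d\,\mu(q/d)$ and interchange summation. Letting $N_d(l,n):=\#\{b\Mod d:\,b^2+nb+l^2\equiv 0\Mod d\}$, lifting solutions from $\Z/d\Z$ to $\Z/q\Z$ yields
\begin{equation*}
\sum_{c\Mod q}S(l^2,c^2;q)e\left(\frac{nc}{q}\right)=q\sum_{d\mid q}\mu(q/d)N_d(l,n).
\end{equation*}
The key observation is that the substitution $x=2b+n$ gives a bijection between $\{b\Mod d\}$ and $\{x\Mod{2d}:x\equiv n\Mod 2\}$, and transforms $b^2+nb+l^2\equiv 0\Mod d$ into $x^2\equiv n^2-4l^2\Mod{4d}$. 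Since any solution to the latter automatically satisfies $x\equiv n\Mod 2$, I would conclude $N_d(l,n)=\rho_d(n^2-4l^2)$ for every $d\geq 1$.

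Finally I would plug this into the full Dirichlet series, factoring it as a product:
\begin{equation*}
\sum_{q=1}^{\infty}\frac{A_q}{q^{1+s}}=\sum_{q=1}^{\infty}\frac{1}{q^s}\sum_{d\mid q}\mu(q/d)\rho_d(n^2-4l^2)=\left(\sum_m\frac{\mu(m)}{m^s}\right)\left(\sum_d\frac{\rho_d(n^2-4l^2)}{d^s}\right),
\end{equation*}
which equals $\zeta(s)^{-1}\cdot\zeta(s)\zeta(2s)^{-1}\mathscr{L}_{n^2-4l^2}(s)$ by the definition of $\mathscr{L}_n(s)$. I do not anticipate a serious obstacle; the only mild subtlety is verifying that the bijection $b\mapsto 2b+n$ correctly parametrizes the square-root count $\rho_d$ uniformly in the parity of $d$ and $n$, but the parity condition $x\equiv n\Mod 2$ is forced by $x^2\equiv n^2\Mod 4$, so no case distinction is actually needed. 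Absolute convergence (for $\Re s$ sufficiently large) needed to justify the rearrangements follows from Weil's bound applied to $S(l^2,c^2;q)$; the identity then extends by analytic continuation.
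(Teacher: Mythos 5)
Your proposal is correct and follows essentially the same route as the paper: opening the Kloosterman sum, the unit substitution reducing the inner sum to a Ramanujan sum $c_q(b^2+nb+l^2)$, the completion of the square $x=2b+n$ identifying the root count with $\rho_d(n^2-4l^2)$, and the final Euler-product factorization against $\zeta(s)^{-1}$. The only differences are cosmetic (the paper substitutes $c=c_1a^*$ rather than $c=ab$, and you spell out the parity bookkeeping and convergence justification slightly more explicitly).
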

\begin{proof}
Consider
\begin{multline*}
S:=\sum_{c\Mod{q}}S(l^2,c^2;q)e\left(n\frac{c}{q}\right)=\\
\sum_{c\Mod{q}}\sum_{\substack{a \Mod{q}\\(a,q)=1}}e\left( \frac{ac^2+a^*l^2+nc}{q}\right),
\end{multline*}
where $aa^*\equiv 1\Mod{q}$. Making the change of variables $c=c_1a^*$, we have
\begin{multline*}
S=
\sum_{\substack{a \Mod{q}\\(a,q)=1}}\sum_{c_1 \Mod{q}}e\left(\frac{c_{1}^{2}a^*+l^2a^*+nc_1a^*}{q}\right)=\\
\sum_{c_1 \Mod{q}}S(0,c_{1}^{2}+l^2+nc_1;q)=\sum_{c\Mod{q}}\sum_{\substack{bd=q\\b|c^2+l^2+nc}}\mu(d)b=\\
\sum_{bd=q}\mu(d)b\sum_{\substack{c \Mod{q}\\ c^2+l^2+nc\equiv 0 \Mod{b}}}1=\sum_{bd=q}\mu(d)b
\sum_{\substack{c\Mod{b}\\ c^2+l^2+nc\equiv 0\Mod{b}}}\frac{q}{b}.
\end{multline*}
The condition $$c^2+l^2+nc\equiv 0 \Mod{b}$$ is equivalent to $$(2c+n)^2+4l^2-n^2\equiv 0\Mod{4b}.$$
Hence
\begin{equation*}
S=q\sum_{bd=q}\mu(d)\sum_{\substack{c\Mod{2b}\\ c^2\equiv n^2-4l^2\Mod{4b}}}1=q\sum_{bd=q}\mu(d)\rho_b(n^2-4l^2).
\end{equation*}
Consequently,
\begin{multline*}
\sum_{q=1}^{\infty}\frac{1}{q^{1+s}}\sum_{c\Mod{q}}S(l^2,c^2;q)e\left(\frac{nc}{q}\right)=
\sum_{q=1}^{\infty}\frac{1}{q^s}\sum_{bd=q}\mu(d)\rho_b(n^2-4l^2)=\\
\sum_{b=1}^{\infty}\frac{\rho_b(n^2-4l^2)}{b^s}\sum_{q=1}^{\infty}\frac{\mu(q)}{q^s}=
\frac{1}{\zeta(s)}\sum_{q=1}^{\infty}\frac{\rho_q(n^2-4l^2)}{q^s}=\frac{\mathscr{L}_{n^2-4l^2}(s)}{\zeta(2s)}
.
\end{multline*}
\end{proof}

\begin{lem}\label{lem:subconvexity}
Assume that $d\neq 0$. For any $\epsilon>0$ one has
\begin{equation}\label{eq:subconvexity}
\mathscr{L}_d(1/2)\ll d^{1/6+\epsilon}.
\end{equation}
If the Lindel\"{o}f hypothesis for Dirichlet $L$-functions is true, then
\begin{equation}\label{eq:lindelof}
\mathscr{L}_d(1/2)\ll d^{\epsilon}.
\end{equation}
\end{lem}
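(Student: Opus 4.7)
The plan is to reduce the estimate on $\mathscr{L}_d(1/2)$ to a bound for a single Dirichlet $L$-function at the central point, using the factorization (\ref{ldecomp}). First note that for $d\equiv 2,3\Mod{4}$ we have $\mathscr{L}_d\equiv 0$, as recalled at the beginning of this section, so both bounds are trivial in those residue classes. For the remaining case, with $d\neq 0$, write $d=Dl^2$ with $D$ a fundamental discriminant. Specializing (\ref{ldecomp}) to $s=1/2$ eliminates the prefactor $l^{1/2-s}$ and gives
\begin{equation*}
\mathscr{L}_d(1/2)=T_l^{(D)}(1/2)\,L(1/2,\chi_D).
\end{equation*}

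Next I would estimate the finite arithmetic sum $T_l^{(D)}(1/2)$ trivially. At $s=1/2$ the function $\tau_{s-1/2}$ appearing in (\ref{eq:td}) reduces to the ordinary divisor function $\tau_0$, and a term-by-term bound gives
\begin{equation*}
|T_l^{(D)}(1/2)|\leq \sum_{l_1l_2=l}l_1^{-1/2}\tau_0(l_2)\leq \tau_0(l)\sum_{l_1\mid l}l_1^{-1/2}\ll l^{\epsilon}.
\end{equation*}
At this stage the problem is reduced entirely to bounding $L(1/2,\chi_D)$ for a real primitive character of conductor $|D|\leq |d|$.

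For the unconditional estimate (\ref{eq:subconvexity}), I would invoke the Conrey--Iwaniec subconvexity bound $L(1/2,\chi_D)\ll |D|^{1/6+\epsilon}$ for quadratic Dirichlet $L$-functions. Combined with the trivial estimate on $T_l^{(D)}(1/2)$ and $|D|\leq |d|$, $l\leq |d|^{1/2}$, this yields $\mathscr{L}_d(1/2)\ll l^{\epsilon}|D|^{1/6+\epsilon}\ll |d|^{1/6+\epsilon}$, as required. For the conditional estimate (\ref{eq:lindelof}), the very same computation with the Conrey--Iwaniec bound replaced by the Lindel\"of hypothesis $L(1/2,\chi_D)\ll |D|^{\epsilon}$ delivers $\mathscr{L}_d(1/2)\ll |d|^{\epsilon}$.

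The only non-elementary input is the Conrey--Iwaniec subconvexity estimate; once it is available as a black box, the remainder of the argument is a routine divisor-sum bound together with the factorization (\ref{ldecomp}). There is therefore no serious obstacle in the proof itself; rather, the exponent $1/6$ in the lemma is precisely the quality of the current best subconvexity bound for real Dirichlet $L$-functions at the centre, and any future improvement of the latter would immediately propagate to $\mathscr{L}_d(1/2)$ and hence, via Corollary \ref{thm:asympformula}, to the exponent $5/6$ in the $l$-aspect error term.
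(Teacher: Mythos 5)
Your proposal is correct and follows essentially the same route as the paper: factor $\mathscr{L}_d(1/2)=T_l^{(D)}(1/2)L(1/2,\chi_D)$ via \eqref{ldecomp}, bound $T_l^{(D)}(1/2)\ll l^{\epsilon}$ by a trivial divisor-sum estimate from \eqref{eq:td}, and invoke the Conrey--Iwaniec bound $L(1/2,\chi_D)\ll |D|^{1/6+\epsilon}$ (respectively Lindel\"of) to conclude. The paper's proof is identical in substance, so there is nothing to add.
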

\begin{proof}
For $d=Dl^2$ with $D$ fundamental discriminant one has
\begin{equation*}
\mathscr{L}_{d}(1/2)=T_{l}^{(D)}(1/2)L(1/2,\chi_D)
\end{equation*}
by equation \eqref{ldecomp}.
It follows from equality \eqref{eq:td} that
\begin{equation*}
T_{l}^{(D)}(1/2)\ll \sum_{l_1l_2=l}\frac{\sigma(l_2)}{\sqrt{l_1}}\ll \sum_{l_1|l}\frac{(l/l_1)^{\epsilon}}{\sqrt{l_1}}\ll l^{\epsilon}.
\end{equation*}
By \cite[Corollary~1.5]{CI} for any $\epsilon>0$ one has
\begin{equation*}
L(1/2, \chi_D)\ll D^{1/6+\epsilon}.
\end{equation*}
This implies the required bounds for the function $\mathscr{L}_{d}(1/2)$.
\end{proof}

\section{Exact formula}


\begin{lem} For $\Re{s}>3/2$ one has
\begin{multline}\label{eq:M1ls}
M_1(l,s)=\frac{\zeta(2s)}{l^{s}}+
\frac{(2\pi)^{s}i^{2k}}{2l^{1-s}}\frac{\Gamma(k-s/2)}{\Gamma(k+s/2)}\mathscr{L}_{-4l^2}(s)+\\
(2\pi)^{s}i^{2k}\sum_{n=1}^{\infty}\frac{1}{n^{1-s}}\mathscr{L}_{n^2-4l^2}(s)I\left( \frac{n}{l}\right),
\end{multline}
where 
\begin{equation}\label{eq:integralI}
I(x):=\frac{1}{2\pi i}\int_{(\Delta)}\frac{\Gamma(k-1/2+t/2)}{\Gamma(k+1/2-t/2)}\Gamma(1-s-t)\sin\left( \frac{s+t}{2}\right)x^tdt
\end{equation}
with $1-2k<\Delta<1-\Re{s}.$
\end{lem}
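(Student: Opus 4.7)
The plan is to substitute the Dirichlet series of $L(\sym^2 f,s)$ into the definition of $M_1(l,s)$, apply Petersson's trace formula \eqref{Pet}, and then transform the off-diagonal Kloosterman--Bessel expression via Mellin--Barnes inversion, Lerch's functional equation \eqref{LerchFE}, and the Kloosterman identity \eqref{eq:sumofklsums}.

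For $\Re s>3/2$ the double series converges absolutely, so Petersson's formula yields the diagonal $\delta_{l,n}$, which supplies the first summand $\zeta(2s)/l^s$, and the off-diagonal
\[
2\pi i^{2k}\zeta(2s)\sum_{c\geq 1}\frac{1}{c}\sum_{n\geq 1}\frac{S(l^2,n^2;c)}{n^s}\,J_{2k-1}\!\left(\frac{4\pi ln}{c}\right).
\]
I would replace $J_{2k-1}$ by its Mellin--Barnes representation
\[
J_{2k-1}(x)=\frac{1}{2\pi i}\int_{(\Delta)}2^{t-1}\frac{\Gamma(k-1/2+t/2)}{\Gamma(k+1/2-t/2)}\,x^{-t}\,dt,
\]
on a vertical line $\Re t=\Delta\in(1-\Re s,-1/2)$, which is nonempty since $\Re s>3/2$ and on which Weil's bound permits the interchange of summation with integration. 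Splitting the sum over $n$ by residue class $a$ modulo $c$ converts the inner double sum over $c,n$ into $\sum_{c}c^{-(1+s)}\sum_aS(l^2,a^2;c)\zeta(a/c,0,s+t)$.

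Each Hurwitz zeta $\zeta(a/c,0,s+t)$ has a simple pole at $s+t=1$ with residue $1$, independent of $a$. Consequently, after summing over $c$, the integrand acquires a single simple pole at $t=1-s$ whose residue, by the $n=0$ case of the Kloosterman identity
\[
\sum_{c\geq 1}\frac{T_c(0)}{c^{1+s}}=\frac{\mathscr{L}_{-4l^2}(s)}{\zeta(2s)}, \qquad T_c(0):=\sum_{a\Mod c}S(l^2,a^2;c),
\]
equals $\mathscr{L}_{-4l^2}(s)/\zeta(2s)$ multiplied by the Bessel--Mellin factor $2^{-s}\Gamma(k-s/2)/\Gamma(k+s/2)\cdot(4\pi l)^{s-1}$. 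Shifting the $t$-contour leftwards to $\Delta\in(1-2k,1-\Re s)$, which is the range over which $I(n/l)$ in \eqref{eq:integralI} is defined and in which the ensuing series $\sum_n\mathscr{L}_{n^2-4l^2}(s)/n^{1-s-t}$ converges absolutely by \eqref{eq:subconvexity}, picks up this residue; collecting constants with $2\pi\cdot 2^{-s}(4\pi l)^{s-1}=(2\pi)^s/(2l^{1-s})$ produces exactly the exceptional term $\tfrac{(2\pi)^si^{2k}}{2l^{1-s}}\tfrac{\Gamma(k-s/2)}{\Gamma(k+s/2)}\mathscr{L}_{-4l^2}(s)$.

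On the shifted contour I would apply Lerch's functional equation \eqref{LerchFE} to each Hurwitz zeta; by the evenness $S(l^2,a^2;c)=S(l^2,(-a)^2;c)$ the two Lerch summands $\zeta(0,\pm a/c,1-s-t)$ combine into $2\sin(\pi(s+t)/2)T_c(n)$, with $T_c(n)=\sum_aS(l^2,a^2;c)e(na/c)$. The Kloosterman identity \eqref{eq:sumofklsums} then reduces $\sum_cT_c(n)/c^{1+s}=\mathscr{L}_{n^2-4l^2}(s)/\zeta(2s)$ for each $n\geq 1$; after tidying the factors $2^{t-1}$, $(4\pi l)^{-t}$, $(2\pi)^{s+t-1}$, and $i^{2k}/i$ and recognizing the kernel $I(n/l)$, one arrives at the final series $(2\pi)^si^{2k}\sum_{n\geq 1}\mathscr{L}_{n^2-4l^2}(s)I(n/l)/n^{1-s}$, completing \eqref{eq:M1ls}. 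The main technical obstacle is the justification of the contour shift across the pole at $t=1-s$ and the careful book-keeping of the convergence strips underlying the various nested interchanges of sums and integrals.
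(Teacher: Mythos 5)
Your proposal is correct and follows essentially the same route as the paper: Petersson's formula, the Mellin--Barnes representation of $J_{2k-1}$, splitting $n$ into residue classes to produce Lerch (Hurwitz) zeta values, a leftward contour shift across the simple pole at $t=1-s$ yielding the $\mathscr{L}_{-4l^2}(s)$ term via the $n=0$ case of \eqref{eq:sumofklsums}, and then Lerch's functional equation combined with \eqref{eq:sumofklsums} for the remaining series. The only cosmetic discrepancy is your appeal to the central-point bound \eqref{eq:subconvexity} for the convergence of the final sum over $n$; in the region $\Re{s}>3/2$ one simply uses the trivial polynomial bound on $\mathscr{L}_{n^2-4l^2}(s)$ together with the decay of $I(n/l)$ on the shifted contour, as the paper does.
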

\begin{proof}
By the Petersson trace formula
\begin{multline*}
M_1(l,s)=\zeta(2s)\sum_{n=1}^{\infty}\frac{1}{n^{s}}\sum_{f \in H_{2k}}^{h}\lambda_f(l^2)\lambda_f(n^2)=\\
\frac{\zeta(2s)}{l^{s}}+2\pi i^{2k}\zeta(2s)\sum_{q=1}^{\infty}\frac{1}{q}
\sum_{n=1}^{\infty}\frac{S(l^2,n^2;q)}{n^{s}}J_{2k-1}\left(4\pi \frac{ln}{q}\right).
\end{multline*}

The change of order of summation above is justified by the absolute convergence for $\Re{s}>3/2$, which follows from the standard estimates
\begin{equation*}
S(l^2,n^2;q)\ll q^{1/2+\epsilon}(l^2,n^2,q)^{1/2} \text{ for any } \epsilon>0
\end{equation*}
and
\begin{equation*}
J_{2k-1}\left(4\pi \frac{ln}{q}\right)\ll \begin{cases}
(ln/q)^{2k-1} & q>ln\\
(ln/q)^{-1/2} & q<ln.
\end{cases}
\end{equation*}
Next, we use the Mellin-Barnes representation for the Bessel function
\begin{multline*}
M_1(l,s)=\frac{\zeta(2s)}{l^{s}}+2\pi i^{2k}\zeta(2s)\sum_{q=1}^{\infty}\frac{1}{q}\times 
\\ \frac{1}{4\pi i}
\int_{(\Delta)}\frac{\Gamma(k-1/2+t/2)}{\Gamma(k+1/2-t/2)}\sum_{n=1}^{\infty}\frac{S(l^2,n^2;q)}{n^{t+s}}
\left(\frac{q}{2\pi l} \right)^tdt,
\end{multline*}
where $1-2k<\Delta<0$.
To guarantee the absolute convergence of the integral over $t$ and the sums over $q,n$ we require that
$$\max(1-2k,1-\Re{s})<\Delta<-1/2,$$ which is true for $\Re{s}>3/2$.
Consider
\begin{multline*}
\sum_{n=1}^{\infty}\frac{S(l^2,n^2;q)}{n^{t+s}}=\sum_{c\Mod{q}}\sum_{n \equiv c\Mod{q}}\frac{S(l^2,c^2;q)}{n^{t+s}}=\\ \sum_{c\Mod{q}}S(l^2,c^2;q) \sum_{n=1}^{\infty}\frac{1}{(c+nq)^{t+s}}=
\sum_{c\Mod{q}}\frac{S(l^2,c^2;q)}{q^{t+s}}\zeta\left(\frac{c}{q},0,t+s\right).
\end{multline*}
Note that the Lerch zeta function has a pole at $t=1-s$ for all $c$.
The next step is to apply functional equation \eqref{LerchFE} for the Lerch zeta function  which is only possible when $\Re(s+t)<0$. Accordingly, we move the $t$-contour to the left up to $\Delta_1:=-s-\epsilon$, crossing a simple
pole at $t=1-s$. Therefore,
\begin{multline*}
M_1(l,s)=\frac{\zeta(2s)}{l^s}+2\pi i^{2k}\zeta(2s)\frac{\Gamma(k-s/2)}{\Gamma(k+s/2)}\sum_{q=1}^{\infty}
\sum_{c\Mod{q}}\frac{S(l^2,c^2;q)}{2q^2}\times \\ \left( \frac{q}{2\pi l}\right)^{1-s}+2\pi i^{2k}\zeta(2s)
\sum_{q=1}^{\infty}\frac{1}{q} \frac{1}{4\pi i}\times \\\int_{(\Delta_1)}\frac{\Gamma(k-1/2+t/2)}{\Gamma(k+1/2-t/2)}
\left(\frac{q}{2\pi l} \right)^t\sum_{c \Mod{q}}\frac{S(l^2,c^2;q)}{q^{s+t}}\zeta(c/q,0;s+t)ds.
\end{multline*}
Using functional equation \eqref{LerchFE}, we obtain
\begin{multline*}
\sum_{c \Mod{q}}S(l^2,c^2;q)\zeta(c/q,0;s+t)=2(2\pi)^{s+t-1}\Gamma(1-s-t)\times \\\sin\left(\pi \frac{s+t}{2}\right)
\sum_{c\Mod{q}}S(l^2,c^2;q)\zeta(0,c/q;1-s-t).
\end{multline*}
Substituting this into $M_1(l,s)$ and opening the Lerch zeta function, one has
\begin{multline*}
M_1(l,s)=\frac{\zeta(2s)}{l^s}+\frac{(2\pi)^s i^{2k}}{2l^{1-s}}\zeta(2s)\frac{\Gamma(k-s/2)}{\Gamma(k+s/2)}
\sum_{q=1}^{\infty}\sum_{c\Mod{q}}\frac{S(l^2,c^2;q)}{q^{1+s}}+\\
(2\pi)^si^{2k}\zeta(2s)\sum_{n=1}^{\infty}\frac{1}{n^{1-s}}\sum_{q=1}^{\infty}\frac{1}{q^{1+s}}
\sum_{c\Mod{q}}S(l^2,c^2;q)e\left(n\frac{c}{q}\right)I\left(\frac{n}{l} \right),
\end{multline*}
where $I(x)$ is defined by equation \eqref{eq:integralI}.
Finally, computing the sums over $c$ and $q$ using formula \eqref{eq:sumofklsums}, we prove the Lemma.
\end{proof}

\begin{lem}
If $x \geq 2$, then 
\begin{multline}\label{integralIgeq2}
I(x)=\frac{2^{2k}(-1)^k}{2^s\sqrt{\pi}}\cos\left( \frac{\pi s}{2}\right)x^{1-2k}\frac{\Gamma(k-s/2)\Gamma(k+1/2-s/2)}{\Gamma(2k)}\times \\ {}_2F_{1}\left(k-s/2,k+1/2-s/2,2k;\frac{4}{x^2} \right).
\end{multline}
\end{lem}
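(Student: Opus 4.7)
The plan is to evaluate $I(x)$ by shifting the contour of integration far to the left and summing the residues picked up at the poles of $\Gamma(k-1/2+t/2)$. These poles sit at $t_n:=1-2k-2n$ for $n=0,1,2,\ldots$, with residue $2(-1)^n/n!$ (the factor $2$ coming from the $t/2$ in the argument). All of them lie to the left of the original contour since $\Delta>1-2k$. Because $x\geq 2>1$, the factor $x^t$ decays exponentially as $\Re t\to-\infty$, and Stirling's estimates on the Gamma quotient $\Gamma(k-1/2+t/2)\Gamma(1-s-t)/\Gamma(k+1/2-t/2)$, combined with the reflection identity
\[\Gamma(1-s-t)\sin(\pi(s+t)/2)=\frac{\pi}{2\cos(\pi(s+t)/2)\,\Gamma(s+t)},\]
control the behavior in $|\Im t|$; this legitimizes sending the contour to $\Re t=-\infty$, so that $I(x)$ equals the total sum of residues.

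At $t=t_n$ the substitutions $k+1/2-t_n/2=2k+n$ and $1-s-t_n=2k+2n-s$, together with the identity $\sin(\pi(s+t_n)/2)=(-1)^{k+n}\cos(\pi s/2)$, reduce the residue to
\[\frac{2(-1)^{k}\cos(\pi s/2)}{n!\,\Gamma(2k+n)}\,\Gamma(2k+2n-s)\,x^{1-2k-2n}.\]
Summing over $n\geq 0$ produces a power series in $4/x^2$. To recognize it as a Gauss hypergeometric function, one applies the Legendre duplication formula
\[\Gamma(2k+2n-s)=\frac{2^{2k+2n-s-1}}{\sqrt{\pi}}\,\Gamma(k+n-s/2)\,\Gamma(k+n+1/2-s/2),\]
pulls out the $n$-independent Gammas by means of $\Gamma(a+n)=(a)_n\Gamma(a)$ and $\Gamma(2k+n)=(2k)_n\Gamma(2k)$, and collects the constant factors. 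The leftover sum is exactly
\[{}_2F_1\!\left(k-s/2,\;k+1/2-s/2;\;2k;\;\tfrac{4}{x^2}\right),\]
and the prefactors assemble into the form stated in the lemma.

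The main technical obstacle I expect is the rigorous justification of the contour shift: one must verify that on rectangular contours joining $(\Delta)$ to $(-A)$ the horizontal segments at height $|\Im t|=T$ contribute negligibly as $T\to\infty$, and that the vertical integral on $(-A)$ itself vanishes as $A\to\infty$. Both estimates reduce to Stirling's asymptotics, using that the combination of Gamma factors and the trigonometric-Gamma quotient grows only polynomially in $|\Im t|$, so that the $x^t$-factor with $x\geq 2$ drives the integrand to zero as $\Re t\to -\infty$. Everything remaining is a routine rearrangement of Pochhammer symbols.
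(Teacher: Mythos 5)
Your proposal is correct and follows essentially the same route as the paper: shift the contour left, sum the residues of $\Gamma(k-1/2+t/2)$ at $t=1-2k-2j$, apply the Legendre duplication formula to $\Gamma(2k+2j-s)$, and reassemble the series as the stated ${}_2F_1$; your residue computation and final constants match the paper's exactly. The only extra content is your discussion of justifying the contour shift (which the paper omits); just note that the decisive decay comes from the combination $(x/2)^t$ after Stirling, so the hypothesis really needed is $x\ge 2$ rather than $x>1$.
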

\begin{proof}
Moving the contour of integration in \eqref{eq:integralI} to the left, we cross simple poles at $t=1-2k-2j$, $j=0,1,2,\ldots$
Therefore,
\begin{equation*}
I(x)=2(-1)^k\cos \left(\frac{\pi s}{2} \right)x^{1-2k}\sum_{j=0}^{\infty}\frac{1}{j!}\frac{\Gamma(2k-s+2j)}{\Gamma(2k+j)}x^{-2j}.
\end{equation*}
By \cite[Eq.~5.5.5]{HMF} we have
\begin{equation*}
\Gamma(2(k+j-s/2))=\frac{2^{2k-1-s+2j}}{\sqrt{\pi}}\Gamma(k+j-s/2)\Gamma(k+j+1/2-s/2).
\end{equation*}
This yields
\begin{multline*}
I(x)=\frac{2(-1)^k}{\sqrt{\pi}}2^{2k-1-s}\cos \left(\frac{\pi s}{2} \right)x^{1-2k}\sum_{j=0}^{\infty}\frac{1}{j!}\frac{\Gamma(k-s/2+j)}{\Gamma(2k+j)}\times\\ \Gamma(k+1/2-s/2+j)\left(\frac{4}{x^2} \right)^j=
\frac{2^{2k}(-1)^k}{2^s\sqrt{\pi}}\cos\left( \frac{\pi s}{2}\right)x^{1-2k}\times \\ \frac{\Gamma(k-s/2)\Gamma(k+1/2-s/2)}{\Gamma(2k)}{}_2F_{1}\left(k-s/2,k+1/2-s/2,2k;\frac{4}{x^2} \right).
\end{multline*}
\end{proof}

\begin{lem}
One has
\begin{equation}\label{integralIeq2}
I(2)=\frac{2(-1)^k}{2^s\sqrt{\pi}}\cos{\left(\frac{\pi s}{2}\right)}\frac{\Gamma(k-s/2)\Gamma(k+1/2-s/2)}{\Gamma(k+s/2)\Gamma(k-1/2+s/2)}\Gamma(s-1/2).
\end{equation}
\end{lem}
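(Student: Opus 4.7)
The plan is to obtain the evaluation of $I(2)$ as a limiting case of the formula \eqref{integralIgeq2} just established, by specializing $x=2$ (so that the hypergeometric argument $4/x^2$ becomes $1$) and applying the classical Gauss summation theorem for ${}_2F_1$ at unit argument.

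More concretely, first I would substitute $x=2$ into \eqref{integralIgeq2}. The scalar prefactor becomes
\[
\frac{2^{2k}(-1)^k}{2^s\sqrt{\pi}}\cos\!\left(\frac{\pi s}{2}\right)\cdot 2^{1-2k}\cdot\frac{\Gamma(k-s/2)\Gamma(k+1/2-s/2)}{\Gamma(2k)}
=\frac{2(-1)^k}{2^s\sqrt{\pi}}\cos\!\left(\frac{\pi s}{2}\right)\frac{\Gamma(k-s/2)\Gamma(k+1/2-s/2)}{\Gamma(2k)},
\]
so the whole problem reduces to evaluating
\[
{}_2F_{1}\!\left(k-\tfrac{s}{2},\,k+\tfrac{1}{2}-\tfrac{s}{2},\,2k;\,1\right).
\]

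Next I would invoke Gauss's theorem ${}_2F_{1}(a,b,c;1)=\Gamma(c)\Gamma(c-a-b)/(\Gamma(c-a)\Gamma(c-b))$, which is valid whenever $\Re(c-a-b)>0$. With $a=k-s/2$, $b=k+1/2-s/2$, $c=2k$ we compute $c-a-b=s-1/2$, and the convergence condition $\Re(s-1/2)>0$ is comfortably satisfied in the region $\Re s>3/2$ in which we are working. This yields
\[
{}_2F_{1}\!\left(k-\tfrac{s}{2},\,k+\tfrac{1}{2}-\tfrac{s}{2},\,2k;\,1\right)=\frac{\Gamma(2k)\,\Gamma(s-1/2)}{\Gamma(k+s/2)\,\Gamma(k-1/2+s/2)},
\]
and plugging this into the prefactor the $\Gamma(2k)$'s cancel and one recovers the claimed formula \eqref{integralIeq2}.

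I do not expect any real obstacle here; the only thing to justify is that the formula \eqref{integralIgeq2}, which was derived under $x\geq 2$, genuinely extends to the boundary point $x=2$. This can be seen either by continuity (the hypergeometric series converges absolutely at the boundary $z=1$ precisely because $\Re(c-a-b)>0$, so one may simply let $x\to 2^+$) or, alternatively, by redoing the residue-shift argument of the previous lemma directly at $x=2$: the Mellin--Barnes integral \eqref{eq:integralI} with $x=2$ still converges for $\Re s>3/2$, and summing the residues at $t=1-2k-2j$ gives the same series as before, now evaluated at the point where the ${}_2F_1$ reduces via Gauss's theorem. Either route delivers \eqref{integralIeq2}.
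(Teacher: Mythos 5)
Your proposal is correct and is essentially identical to the paper's proof: the authors also set $x=2$ in \eqref{integralIgeq2} and evaluate the resulting ${}_2F_1$ at unit argument via Gauss's summation theorem (cited as \cite[Eq.~15.4.20]{HMF}), with the same cancellation of $\Gamma(2k)$. Your additional check that $\Re(c-a-b)=\Re(s-1/2)>0$ in the working region is a worthwhile detail the paper leaves implicit.
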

\begin{proof}
Letting $x=2$ in \eqref{integralIgeq2} and applying  \cite[Eq.~15.4.20]{HMF}, we find
\begin{equation*}
{}_2F_{1}\left(k-s/2,k+1/2-s/2,2k;1 \right)=\frac{\Gamma(2k)\Gamma(s-1/2)}{\Gamma(k+s/2)\Gamma(k-1/2+s/2)}.
\end{equation*}
The assertion follows.
\end{proof}

\begin{lem}
If $x<2$, then 
\begin{multline}\label{integralIl2}
I(x)=\frac{(-1)^k}{\sqrt{\pi}}\sin\left( \frac{\pi s}{2}\right)x^{1-s}\frac{\Gamma(k-s/2)\Gamma(1-k-s/2)}{\Gamma(1/2)}\times \\
{}_2F_{1}\left( k-\frac{s}{2},1-k-\frac{s}{2},1/2;\frac{x^2}{4}\right).
\end{multline}
\end{lem}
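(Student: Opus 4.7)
The plan is to evaluate \eqref{eq:integralI} by shifting the contour to the right, mirroring the leftward shift that was used for $x\geq 2$. I move past the simple poles of $\Gamma(1-s-t)$ at $t=1-s+j$ for $j=0,1,2,\ldots$ and observe that the factor $\sin(\pi(s+t)/2)$ equals $\sin(\pi(1+j)/2)$ at such a pole, hence vanishes whenever $j$ is odd. Only the even indices $j=2m$ contribute, which is exactly what is needed to produce a ${}_2F_1$ series in the variable $x^2/4$.

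At $t=1-s+2m$ the two Gamma factors become $\Gamma(k-s/2+m)$ and $\Gamma(k+s/2-m)$, the residue of $\Gamma(1-s-t)$ equals $-1/(2m)!$, the sine factor contributes $(-1)^m$, and $x^t=x^{1-s}\cdot x^{2m}$. I then apply Euler's reflection formula in the form
\begin{equation*}
\frac{1}{\Gamma(k+s/2-m)}=\frac{\sin(\pi(k+s/2-m))}{\pi}\Gamma(1-k-s/2+m)=\frac{(-1)^{k+m}\sin(\pi s/2)}{\pi}\Gamma(1-k-s/2+m),
\end{equation*}
whose sign $(-1)^{k+m}$ cancels the $(-1)^m$ coming from the sine, leaving a uniform global constant $(-1)^k\sin(\pi s/2)/\pi$. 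The Pochhammer symbols $(k-s/2)_m=\Gamma(k-s/2+m)/\Gamma(k-s/2)$ and $(1-k-s/2)_m=\Gamma(1-k-s/2+m)/\Gamma(1-k-s/2)$ emerge after pulling out the $m=0$ factors, and the Legendre duplication identity $(2m)!=4^m m!\,(1/2)_m$ lets me rewrite $x^{2m}/(2m)!=(x^2/4)^m/(m!\,(1/2)_m)$. The residue series then collapses into the Gauss hypergeometric function ${}_2F_{1}(k-s/2,1-k-s/2,1/2;x^2/4)$, whose series expansion converges absolutely for $|x^2/4|<1$, matching the hypothesis $x<2$. Writing $1/\pi=1/(\sqrt{\pi}\,\Gamma(1/2))$ produces the closed form in the statement.

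The main obstacle is careful sign bookkeeping: the $-1$ from the residue of $\Gamma(1-s-t)$, the alternating $(-1)^m$ from the sine, the $(-1)^{k+m}$ from reflection, and the orientation reversal that accompanies a rightward contour shift in place of a leftward one, must all combine correctly into the single factor $(-1)^k$ appearing in the target formula. Justifying the contour shift itself is routine: Stirling's asymptotics control the integrand along vertical lines, and together with the absolute convergence of the resulting hypergeometric series they validate the residue calculation throughout the whole range $x<2$.
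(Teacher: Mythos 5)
Your proposal is correct and follows essentially the same route as the paper: shift the contour rightward past the poles of $\Gamma(1-s-t)$ at $t=1-s+j$, discard odd $j$ via the vanishing of $\sin(\pi(1+j)/2)$, and convert the even-index residue series into the stated ${}_2F_1$ using the duplication identity for $(2m)!$ and Euler's reflection formula for $\Gamma(k+s/2-m)$. The sign bookkeeping you describe (residue $-1/(2m)!$, orientation reversal, $(-1)^m$ from the sine, $(-1)^{k+m}$ from reflection) combines exactly as in the paper's computation.
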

\begin{proof}
Moving the contour of integration in \eqref{eq:integralI} to the right we cross simple poles at $t=1-s+j$, $j=0,1,2,\ldots$ Accordingly,
\begin{equation*}
I(x)=\sum_{j=0}^{\infty}\frac{(-1)^j}{j!}\frac{\Gamma(k-s/2+j/2)}{\Gamma(k+s/2-j/2)}\sin \left(\pi \frac{1+j}{2}\right)x^{1-s+j}.
\end{equation*}
Note that
\begin{equation*}
\sin \left(\pi \frac{1+j}{2}\right)=\cos \left( \frac{\pi j}{2}\right)=\begin{cases}
0 & j \text{ is odd,}\\
(-1)^m & j=2m.
\end{cases}
\end{equation*}
Thus
\begin{equation*}
I(x)=\sum_{m=0}^{\infty}\frac{1}{(2m)!}\frac{\Gamma(k-s/2+m)}{\Gamma(k+s/2-m)}(-1)^m x^{1-s+2m}.
\end{equation*}
In order to express $I(x)$ in terms of the Gauss hypergeometric function we apply \cite[Eq.~5.5.5]{HMF}, obtaining
\begin{equation*}
(2m)!=\Gamma(2(m+1/2))=\frac{1}{\sqrt{\pi}}2^{2m}\Gamma(m+1/2)\Gamma(m+1).
\end{equation*}
Furthermore, by Euler's reflection formula
\begin{equation*}
\Gamma(k+s/2-m)=\frac{\pi}{(-1)^{k-m}\sin{(\pi s/2)}\Gamma(1-k-s/2+m)}.
\end{equation*}
Finally,
\begin{multline*}
I(x)=\frac{(-1)^k}{\sqrt{\pi}}\sin\left( \frac{\pi s}{2}\right)x^{1-s}
\sum_{m=0}^{\infty}\frac{1}{m!}\frac{\Gamma(k-s/2+m)}{\Gamma(m+1/2)}\times 
\\ \Gamma(1-k-s/2+m)\left(\frac{x^2}{4}\right)^{m}=
 \frac{(-1)^k}{\sqrt{\pi}}\sin\left( \frac{\pi s}{2}\right)x^{1-s}
\times \\ \frac{\Gamma(k-s/2)\Gamma(1-k-s/2)}{\Gamma(1/2)}
{}_2F_{1}\left( k-\frac{s}{2},1-k-\frac{s}{2},1/2;\frac{x^2}{4}\right).
\end{multline*}
\end{proof}

Next, we  substitute  equations \eqref{integralIgeq2}, \eqref{integralIeq2}, \eqref{integralIl2} into
expression \eqref{eq:M1ls}, proving the exact formula for the shifted first moment.
\begin{thm}
For any $l \geq 1$ and $2-2k<\Re{s}<2k-1$ one has
\begin{multline}\label{eq:M1ls2}
M_1(l,s)=\frac{\zeta(2s)}{l^{s}}+
\frac{(2\pi)^{s}i^{2k}}{2l^{1-s}}\frac{\Gamma(k-s/2)}{\Gamma(k+s/2)}\mathscr{L}_{-4l^2}(s)+\\
\frac{(2\pi)^s}{\sqrt{\pi}}\frac{\zeta(2s-1)}{l^{1-s}}\cos\left(\frac{\pi s}{2}\right)\frac{\Gamma(k-s/2)\Gamma(k+1/2-s/2)}{\Gamma(k+s/2)\Gamma(k-1/2+s/2)}\Gamma(s-1/2)+\\
\frac{(2\pi)^{s}\sin(\pi s/2)}{\sqrt{\pi}l^{1-s}}\sum_{1\leq n<2l}\mathscr{L}_{n^2-4l^2}(s)
\frac{\Gamma(k-s/2)\Gamma(1-k-s/2)}{\Gamma(1/2)}\times\\
{}_2F_1\left( k-\frac{s}{2},1-k-\frac{s}{2},1/2;\left(\frac{n}{2l}\right)^2\right)+
\frac{2^{2k}\pi^{s}\cos(\pi s/2)}{\sqrt{\pi}}\times \\\sum_{n>2l}\mathscr{L}_{n^2-4l^2}(s)
\frac{\Gamma(k-s/2)\Gamma(k+1/2-s/2)}{\Gamma(2k)} \times \\
\frac{1}{n^{1-s}}\left(\frac{n}{l}\right)^{1-2k}
{}_2F_1\left( k-\frac{s}{2},k+1/2-\frac{s}{2},2k;\left( \frac{2l}{n}\right)^2\right).
\end{multline}
\end{thm}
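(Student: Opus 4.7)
The plan is a direct substitution followed by analytic continuation. Starting from the representation \eqref{eq:M1ls} for $M_1(l,s)$, valid in the half-plane $\Re s > 3/2$, I would split the sum $\sum_{n \geq 1}$ according to the sign of $n^2 - 4l^2$. Since $l$ is a positive integer, the case $n = 2l$ occurs for exactly one index, and there $\mathscr{L}_{n^2-4l^2}(s) = \mathscr{L}_0(s) = \zeta(2s-1)$; this single term is isolated. The remaining sum splits into a finite part $1 \leq n < 2l$, to which the evaluation \eqref{integralIl2} of $I(x)$ applies, and an infinite tail $n > 2l$, to which \eqref{integralIgeq2} applies. The middle index $n = 2l$ is handled by \eqref{integralIeq2}.

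The subsequent calculation is bookkeeping. In each of the three substitutions, the factor $(-1)^k$ carried by $I(x)$ cancels against $i^{2k} = (-1)^k$ from \eqref{eq:M1ls}; the scalars combine through $(2\pi)^s \cdot 2^{-s} = \pi^s$ and, for the $n = 2l$ contribution, $(2l)^{s-1} \cdot 2 \cdot 2^{-s} = l^{s-1}$; and the auxiliary powers $(n/l)^{1-s}$ or $(n/l)^{1-2k}$ from the hypergeometric evaluations recombine with the $n^{s-1}$ prefactor of \eqref{eq:M1ls} to produce the powers of $n$ and $l$ shown in \eqref{eq:M1ls2}. Reading off each of the five summands of \eqref{eq:M1ls2} from the appropriate piece completes the derivation on $\Re s > 3/2$.

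The substantive step, and the main obstacle, is extending the identity to the wider strip $2-2k < \Re s < 2k-1$ by analytic continuation. Meromorphicity of the left-hand side follows from the definition of $M_1(l,s)$ and the standard analytic properties of $L(\sym^2 f, s)$. On the right-hand side, the finite sum over $1 \leq n < 2l$ and the explicit $\mathscr{L}_{-4l^2}$ and $\zeta(2s-1)$ terms are meromorphic on all of $\C$, with pole locations governed by the gamma-factors $\Gamma(k \pm s/2)$ and $\Gamma(1 - k - s/2)$. The delicate ingredient is absolute convergence of the tail sum over $n > 2l$, for which one combines the uniform asymptotic
\[
{}_2F_1\left(k-\tfrac{s}{2}, k+\tfrac{1}{2}-\tfrac{s}{2}, 2k; \left(\tfrac{2l}{n}\right)^2\right) = 1 + O\!\left(\tfrac{k l^2}{n^2}\right),
\]
valid for $n$ large compared to $l$ and $k$, with a polynomial bound on $\mathscr{L}_{n^2-4l^2}(s)$ on vertical strips, which follows from the functional equation \eqref{functlstar}, Lemma \ref{lem:subconvexity} and Phragm\'en--Lindel\"of. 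The factor $(n/l)^{1-2k}$ then supplies the decay that forces convergence exactly in $\Re s < 2k-1$, while the constraint $\Re s > 2-2k$ is the threshold beyond which $\Gamma(1-k-s/2)$ acquires its first family of poles. Meromorphic agreement on $\Re s > 3/2$ then propagates by analytic continuation to the claimed strip.
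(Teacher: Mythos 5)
Your proposal follows the paper's own route exactly: the paper's proof of this theorem consists precisely of substituting the three evaluations \eqref{integralIgeq2}, \eqref{integralIeq2}, \eqref{integralIl2} of $I(x)$ into \eqref{eq:M1ls} according to whether $n<2l$, $n=2l$ (where $\mathscr{L}_0(s)=\zeta(2s-1)$), or $n>2l$, and your bookkeeping of the constants is correct; the continuation argument you supply is left implicit in the paper, so you are if anything more complete. One small correction: the left edge $\Re s>2-2k$ is also forced by convergence of the tail sum (via the functional equation \eqref{functlstar}, $\mathscr{L}_{n^2-4l^2}(s)\ll n^{1-2\Re s+\epsilon}$ for $\Re s<1/2$, so the general term is $n^{1-\Re s-2k+\epsilon}$), not by the poles of $\Gamma(1-k-s/2)$, which occur at the even integers $s=2-2k+2j$ and are all cancelled by the zeros of $\sin(\pi s/2)$.
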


Note that in equation \eqref{eq:M1ls2} only the first and the third summands have poles at $s=1/2$.
Computing the limit as $s \rightarrow 1/2$ we find that these poles cancel each other. This allows proving the exact formula for the first moment of symmetric square $L$-functions at the critical point given by Theorem 
\ref{thm:explicitformula}.

\section{Liouville-Green approximation of hypergeometric functions}
To estimate the functions  $\Phi_k(x)$ and $\Psi_k(x)$, appearing in exact formula \eqref{mainformula}, we apply the Liouville-Green method. It turns out that these special functions have a similar behavior with the ones occurring in the exact formula for the second moment of cusp form $L$-functions. See \cite[Theorem~4.2]{BF}.
\subsection{Properties of $\Phi_k$}

Consider the function $\Phi_k(x)$ for $0<x<1$.
\begin{lem} One has
\begin{multline}\label{eq:decompphi}
\Phi_k(x)=-\pi\Biggl({}_2F_1\left( 2k-\frac{1}{2}, \frac{3}{2}-2k,1;\frac{1-\sqrt{x}}{2}\right)
+\\{}_2F_1\left( 2k-\frac{1}{2}, \frac{3}{2}-2k,1;\frac{1+\sqrt{x}}{2}\right)\Biggr).
\end{multline}
\end{lem}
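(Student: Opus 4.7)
The strategy is to recognize both sides of \eqref{eq:decompphi} as even solutions of the same Legendre differential equation in the variable $u := \sqrt{x}$, and then match them at $u = 0$.

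First, I would set $Y(u) := {}_2F_1(k-\tfrac14, \tfrac34-k; \tfrac12; u^2)$ and translate the hypergeometric differential equation for ${}_2F_1(k-\tfrac14, \tfrac34-k; \tfrac12; \cdot)$ in the variable $x = u^2$. A short chain-rule computation shows that $Y(u)$ satisfies
\[
(1-u^2)Y''(u) - 2u Y'(u) + (2k-\tfrac12)(2k-\tfrac32) Y(u) = 0,
\]
which is Legendre's equation with index $\nu = 2k - \tfrac32$.

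Second, by the classical identity $P_\nu(z) = {}_2F_1(-\nu, \nu+1; 1; (1-z)/2)$ applied at $\nu = 2k-\tfrac32$, the two hypergeometrics on the right of \eqref{eq:decompphi} are precisely $P_{2k-3/2}(u)$ and $P_{2k-3/2}(-u)$, so the right-hand side equals $-\pi\bigl[P_{2k-3/2}(u) + P_{2k-3/2}(-u)\bigr]$. This combination is manifestly even in $u$ and also solves Legendre's equation with $\nu = 2k - \tfrac32$. Since the ODE is regular at $u=0$ and its space of analytic even solutions is one-dimensional, $Y(u)$ and $P_{2k-3/2}(u) + P_{2k-3/2}(-u)$ must be proportional.

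Third, I would pin down the constant of proportionality by evaluating at $u=0$. One has $Y(0) = 1$ and, by the standard formula $P_\nu(0) = \sqrt{\pi}\bigl/\bigl[\Gamma(\tfrac12 + \tfrac\nu2)\Gamma(1-\tfrac\nu2)\bigr]$ specialized to $\nu = 2k - \tfrac32$,
\[
P_{2k-3/2}(0) = \frac{\sqrt{\pi}}{\Gamma(\tfrac54 - k)\Gamma(k+\tfrac14)}.
\]
So the claimed identity reduces to the gamma identity
\[
\Gamma(k-\tfrac14)\,\Gamma(\tfrac54 - k)\cdot\Gamma(\tfrac34-k)\,\Gamma(k+\tfrac14) = -2\pi^2,
\]
which I would verify by two applications of Euler reflection, reducing the left-hand side to $\pi^2/\bigl[\sin\pi(k-\tfrac14)\sin\pi(\tfrac34-k)\bigr]$, and then using the integrality of $k$ to compute the product of sines as $-\tfrac12$.

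The main (minor) obstacle is to be sure the ODE argument really yields a one-dimensional space of even analytic solutions, which requires that the two Frobenius exponents at $u=0$ not coincide in a way that forces a logarithmic second solution; for integer weights $k \geq 1$ this is automatic, and $Y(u)$ picks out the right one-dimensional subspace by its manifest analyticity. I expect the gamma-product verification at $u=0$ to be slightly finicky bookkeeping, but the Legendre ODE structure itself is the essential idea.
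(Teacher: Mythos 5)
Your argument is correct, but it takes a genuinely different route from the paper's. The paper proves \eqref{eq:decompphi} in one step by quoting the quadratic transformation \cite[Eq.~15.8.27]{HMF} with $(a,b)=(k-\tfrac14,\tfrac34-k)$, for which $a+b+\tfrac12=1$, and then reducing the resulting prefactor $\Gamma(k-1/4)\Gamma(k+1/4)\Gamma(3/4-k)\Gamma(5/4-k)/(2\Gamma(1/2)^2)$ to $-\pi$ by Euler reflection --- exactly the gamma computation you perform at the end. What you do instead is re-derive the needed instance of that transformation from scratch: both sides are even solutions of Legendre's equation with $\nu=2k-\tfrac32$ in $u=\sqrt{x}$; the even analytic solutions form a one-dimensional space (note $u=0$ is an \emph{ordinary} point of Legendre's equation, so there are no Frobenius exponents or logarithmic solutions to worry about --- equivalently, in the variable $x=u^2$ the exponents $0$ and $1/2$ differ by a non-integer); and matching at $u=0$ via $P_{2k-3/2}(0)={}_2F_1(2k-\tfrac12,\tfrac32-2k;1;\tfrac12)=\sqrt{\pi}/\bigl(\Gamma(k+\tfrac14)\Gamma(\tfrac54-k)\bigr)$ fixes the constant. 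Your version is self-contained and makes explicit that the right-hand side is $-\pi\bigl(P_{2k-3/2}(\sqrt{x})+P_{2k-3/2}(-\sqrt{x})\bigr)$, which connects nicely with Zagier's form of the exact formula (mentioned after Theorem \ref{thm:explicitformula}) in which these terms appear as Legendre functions; the paper's route is shorter but leans on the cited transformation. One small correction: the general formula you display, $P_\nu(0)=\sqrt{\pi}/\bigl(\Gamma(\tfrac12+\tfrac{\nu}{2})\Gamma(1-\tfrac{\nu}{2})\bigr)$, is not right (it fails already at $\nu=1$, where $P_1(0)=0$); the correct statement is $P_\nu(0)=\sqrt{\pi}/\bigl(\Gamma(1+\tfrac{\nu}{2})\Gamma(\tfrac12-\tfrac{\nu}{2})\bigr)$. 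Fortunately the specialized value you actually use, $\sqrt{\pi}/\bigl(\Gamma(\tfrac54-k)\Gamma(k+\tfrac14)\bigr)$, is the correct one (it is Gauss's second summation theorem for ${}_2F_1(a,b;\tfrac{a+b+1}{2};\tfrac12)$), so the product of the four gamma factors is indeed $-2\pi^2$, the constant comes out to $-\pi$, and the proof is unaffected.
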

\begin{proof}
Applying the quadratic transformation given by \cite[Eq.~15.8.27]{HMF} we obtain
\begin{multline*}
\Phi_k(x)=\Gamma(k-1/4)\Gamma(3/4-k)\Gamma(k+1/4)\Gamma(5/4-k)\times
\\ \frac{1}{2\Gamma^2(1/2)}\Biggl({}_2F_1\left( 2k-\frac{1}{2}, \frac{3}{2}-2k,1;\frac{1-\sqrt{x}}{2}\right)
+\\{}_2F_1\left( 2k-\frac{1}{2}, \frac{3}{2}-2k,1;\frac{1+\sqrt{x}}{2}\right)\Biggr).
\end{multline*}
Note that $\Gamma(1/2)=\sqrt{\pi}$. 
Euler's reflection formula yields
\begin{equation*}
\Gamma(k-1/4)\Gamma(k+1/4)\Gamma(3/4-k)\Gamma(5/4-k)=-2\pi^2.
\end{equation*}
The assertion follows.
\end{proof}

Making the change of variables 
\begin{equation}m:=2k-1/2,\quad k \in \mathbb{N},
\end{equation}
\begin{equation}
y:=\frac{1-\sqrt{x}}{2}, \quad 0<y<1/2,
\end{equation}
one has
\begin{equation}\label{eq:reprphi}
\Phi_k(y)=-\pi\left({}_2F_1\left( m, 1-m,1;y\right)
+{}_2F_1\left( m, 1-m,1;1-y\right)\right).
\end{equation}

At the point $y=0$ the function ${}_2F_1\left( m, 1-m,1;y\right)$ is recessive and  ${}_2F_1\left( m, 1-m,1;1-y\right)$ is dominant. Therefore, further transformations are required to apply the Liouville-Green method to the second function. In particular, we show that ${}_2F_1\left( m, 1-m,1;1-y\right)$ has a similar shape with $\phi_k(x)$ studied in \cite{BF}. Note that the parameter $m$ is now half-integral.

\begin{lem}
Let $m:=2k-1/2$, $k \in \mathbb{N}$. Then
\begin{multline}
{}_2F_1\left( m, 1-m,1;1-y\right)=(-\log{y}+2\psi(1)-2\psi(m))\times \\
{}_2F_1\left( m, 1-m,1;y\right)+
\frac{1}{\pi}\left(\frac{\partial}{\partial a}+\frac{\partial}{\partial b}+2\frac{\partial}{\partial c}\right){}_2F_1\left( a, b, c;y\right)\Bigg{|}_{\substack{a=m\\b=1-m\\c=1}}.
\end{multline}
\end{lem}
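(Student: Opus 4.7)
The plan is to obtain the identity as the degenerate limit of Kummer's connection formula
\begin{equation*}
F(a,b;c;1-y) = \frac{\Gamma(c)\Gamma(c-a-b)}{\Gamma(c-a)\Gamma(c-b)} F(a,b;a+b-c+1;y) + \frac{\Gamma(c)\Gamma(a+b-c)}{\Gamma(a)\Gamma(b)} y^{c-a-b} F(c-a,c-b;c-a-b+1;y),
\end{equation*}
valid for $c-a-b \notin \mathbb{Z}$. The parameters $a=m$, $b=1-m$ satisfy $a+b=1$, so $c=1$ is precisely the degenerate case $c-a-b=0$ in which the two $\Gamma$-prefactors on the right become individually singular, leaving a $0/0$ form whose finite part is the desired logarithmic connection.

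I would regularize by setting $c=1+\varepsilon$ and letting $\varepsilon\to 0$. The factors $\Gamma(\pm\varepsilon)$ produce simple poles with opposite residues, and a direct check shows that the $1/\varepsilon$ singular parts cancel between the two terms on the right. To extract the finite remainder one expands each piece to order $\varepsilon$: the $\Gamma$'s via $\Gamma(m+\varepsilon) = \Gamma(m)[1 + \varepsilon\psi(m) + O(\varepsilon^2)]$ and similar expansions; the power $y^{\varepsilon} = 1 + \varepsilon\log y + O(\varepsilon^2)$; and the two hypergeometric series as
\begin{equation*}
F(m,1-m;1-\varepsilon;y) = F(m,1-m;1;y) - \varepsilon\,\partial_c F\big|_{(m,1-m,1)} + O(\varepsilon^2),
\end{equation*}
\begin{equation*}
F(1-m+\varepsilon,m+\varepsilon;1+\varepsilon;y) = F(m,1-m;1;y) + \varepsilon(\partial_a+\partial_b+\partial_c)F\big|_{(m,1-m,1)} + O(\varepsilon^2).
\end{equation*}
The key combinatorial observation is that the $-\partial_c$ contribution from the first term and the $+\partial_c$ contribution from the second add to yield the operator $\partial_a+\partial_b+2\partial_c$ present in the statement.

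Finally, I would use the two identities specific to $m=2k-1/2$: the Euler reflection $\Gamma(m)\Gamma(1-m) = \pi/\sin(\pi m) = -\pi$ (since $\sin(\pi m) = -1$) and the digamma reflection $\psi(1-m) = \psi(m) + \pi\cot(\pi m) = \psi(m)$ (since $\cot(\pi m) = 0$). These simultaneously collapse the $\psi(a)+\psi(b)$ combination to $2\psi(m)$ and produce the $1/\pi$ normalization on the derivative piece, yielding the coefficient $-\log y + 2\psi(1) - 2\psi(m)$ of $F(m,1-m;1;y)$. The main obstacle is the $\varepsilon$-bookkeeping: every factor must be retained to first order, the pole cancellation between the two terms must be verified explicitly, and the signs arising from the numerous digamma and log expansions must be tracked with care — the calculation is mechanical but unforgiving.
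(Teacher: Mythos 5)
Your method is essentially the paper's. The authors invoke the same Kummer connection formula, in the form given in Bateman--Erd\'elyi (Eq.~33, p.~107) with the parameters perturbed to $a=m\pm u$, $b=1-m\pm u$, $c=1\pm 2u$, and let $u\to 0$; you instead perturb $c=1+\varepsilon$ and let $\varepsilon\to 0$. The two regularizations are interchangeable, and the pole cancellation plus first-order expansion you describe is exactly the content of the paper's one-line proof.

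There is, however, one concrete point where your bookkeeping does not survive being carried out. The reflection identity $\Gamma(m)\Gamma(1-m)=-\pi$ sits in the denominators of \emph{both} connection coefficients: $\Gamma(c-a)\Gamma(c-b)=\Gamma(1-m+\varepsilon)\Gamma(m+\varepsilon)\to-\pi$ in the first term and $\Gamma(a)\Gamma(b)=\Gamma(m)\Gamma(1-m)=-\pi$ in the second. Hence, after the $1/\varepsilon$ poles cancel, the \emph{entire} finite part carries the overall factor $-1/\pi$, not just the derivative piece. Completing your expansion gives
\begin{multline*}
{}_2F_1(m,1-m,1;1-y)=-\frac{1}{\pi}\left(-\log y+2\psi(1)-2\psi(m)\right){}_2F_1(m,1-m,1;y)\\
+\frac{1}{\pi}\left(\frac{\partial}{\partial a}+\frac{\partial}{\partial b}+2\frac{\partial}{\partial c}\right){}_2F_1(a,b,c;y)\Big|_{a=m,\ b=1-m,\ c=1},
\end{multline*}
so the logarithmic coefficient also acquires $-1/\pi$. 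A numerical check at $m=3/2$, $y=1/4$ confirms this: the right-hand side of the lemma as printed evaluates to about $0.336$, while ${}_2F_1(3/2,-1/2,1;3/4)\approx 0.169$, which is what the corrected formula yields. The paper's statement is missing the same factor (a typo that is harmless downstream, since the later arguments use only $G_1(1/2)=G_2(1/2)$, $G_1'(1/2)=-G_2'(1/2)$ and the Liouville--Green machinery), so your endpoint agrees with the printed lemma but not with the calculation you outline; you should either track the $-1/\pi$ through honestly or flag the discrepancy.
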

\begin{proof}
By \cite[Eq.~33,~p.~107]{BE} we have
\begin{multline*}
{}_2F_1\left( m+u, 1-m+u,1;1-y\right)=\\
{}_2F_1\left( m+u, 1-m+u,1+2u;y\right)\frac{\Gamma(1)\Gamma(-2u)}{\Gamma(1-m-u)\Gamma(m-u)}+\\
{}_2F_1\left( m-u, 1-m-u,1-2u;y\right)\frac{\Gamma(1)\Gamma(2u)}{\Gamma(1-m+u)\Gamma(m+u)}y^{-2u}.
\end{multline*}
Computing the limit as $u \rightarrow 0$, we prove the assertion.
\end{proof}

\begin{lem}\label{lem:hyp}
For $m=2k-1/2$, $k \in \mathbb{N}$ one has
\begin{equation}\label{eq:hyp12}
{}_2F_1\left( m, 1-m,1;1/2\right)=-\frac{(-1)^k}{\sqrt{2\pi}}\frac{\Gamma(k-1/4)}{\Gamma(k+1/4)},
\end{equation}
\begin{equation}\label{eq:derhyp12}
\frac{d}{dx}\biggl({}_2F_1\left( m, 1-m,1;x\right)\biggr)\bigg|_{x=1/2}=\frac{4(-1)^k}{\sqrt{2\pi}}\frac{\Gamma(k+1/4)}{\Gamma(k-1/4)}.
\end{equation}
\end{lem}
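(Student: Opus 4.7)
The plan is to evaluate both quantities by combining a classical closed form for ${}_2F_1$ at the argument $1/2$ with the reflection and recurrence relations for the Gamma function.

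For the first identity, I would invoke Bailey's theorem, namely the formula ${}_2F_1(a,b,(a+b+1)/2;1/2) = \sqrt{\pi}\,\Gamma((a+b+1)/2)/(\Gamma((a+1)/2)\Gamma((b+1)/2))$ (this is in \cite{HMF}, cf.\ Eq.~15.4.28). With $a=m=2k-1/2$ and $b=1-m$, the third parameter equals $1$ and this yields
\begin{equation*}
{}_2F_1(m,1-m,1;1/2)=\frac{\sqrt{\pi}}{\Gamma(k+1/4)\,\Gamma(5/4-k)}.
\end{equation*}
Then Euler's reflection formula at $z=k-1/4$ together with
$\sin(\pi(k-1/4))=-(-1)^k/\sqrt{2}$
rewrites $\Gamma(5/4-k)$ as $-\sqrt{2}\pi(-1)^k/\Gamma(k-1/4)$, producing the desired expression \eqref{eq:hyp12}.

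For the derivative, I would first apply the standard rule $\frac{d}{dx}{}_2F_1(a,b,c;x) = (ab/c)\,{}_2F_1(a+1,b+1,c+1;x)$ to obtain
\begin{equation*}
\frac{d}{dx}{}_2F_1(m,1-m,1;x)\bigg|_{x=1/2}= m(1-m)\,{}_2F_1(m+1,2-m,2;1/2).
\end{equation*}
The shifted parameters still satisfy Bailey's constraint $(a+b+1)/2 = c$, since here it reads $(m+1+2-m+1)/2=2$. Applying the same formula yields
\begin{equation*}
{}_2F_1(m+1,2-m,2;1/2)=\frac{\sqrt{\pi}}{\Gamma(k+3/4)\,\Gamma(7/4-k)},
\end{equation*}
and reflection at $z=k-3/4$ turns $\Gamma(7/4-k)$ into $-\sqrt{2}\pi(-1)^k/\Gamma(k-3/4)$.

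Finally, I would convert the ratio $\Gamma(k-3/4)/\Gamma(k+3/4)$ back into $\Gamma(k+1/4)/\Gamma(k-1/4)$ via $\Gamma(z+1)=z\Gamma(z)$, which gives the factor $1/((k-3/4)(k-1/4))$. Combined with $m(1-m)=(2k-1/2)(3/2-2k) = -4(k-1/4)(k-3/4)$, the pre\-factor collapses to $4(-1)^k/\sqrt{2\pi}$, yielding \eqref{eq:derhyp12}. I do not expect a genuine obstacle here: both claims reduce to Bailey's theorem plus bookkeeping with the reflection and recurrence for $\Gamma$; the only point requiring care is tracking signs in $\sin(\pi(k-1/4))$ and $\sin(\pi(7/4-k))$ so that the $(-1)^k$ emerges with the correct sign.
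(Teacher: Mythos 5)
Your proposal is correct, and it takes a genuinely different route from the paper. For \eqref{eq:hyp12} the paper does not invoke a summation theorem at all: it evaluates the quadratic-transformation identity \eqref{eq:decompphi} at $x=0$, where both hypergeometric series collapse to the single value ${}_2F_1(m,1-m,1;1/2)$, and compares with $\Phi_k(0)=\Gamma(k-1/4)\Gamma(3/4-k)/\Gamma(1/2)$ read off from the definition \eqref{defphi}; reflection then gives the stated closed form. For \eqref{eq:derhyp12} the paper applies the quadratic transformation \cite[Eq.~15.8.25]{HMF}, rewriting the function in terms of hypergeometric series in the variable $(1-2x)^2$, so that differentiation at $x=1/2$ annihilates all but one elementary term. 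Your argument instead uses the second Gauss summation theorem together with the contiguous derivative relation $\frac{d}{dx}{}_2F_1(a,b,c;x)=(ab/c)\,{}_2F_1(a+1,b+1,c+1;x)$, followed by $\Gamma$-function bookkeeping; I checked the signs ($\sin(\pi(k-1/4))=\sin(\pi(k-3/4))=-(-1)^k/\sqrt{2}$, $m(1-m)=-4(k-1/4)(k-3/4)$) and the telescoping $(k-3/4)\Gamma(k-3/4)=\Gamma(k+1/4)$, $\Gamma(k+3/4)=(k-1/4)\Gamma(k-1/4)$, and everything comes out right. Your route is more self-contained, since it does not lean on the surrounding lemmas about $\Phi_k$; the paper's route avoids introducing any summation theorem by recycling identities it already needs elsewhere. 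One cosmetic point: the formula you quote with third parameter $(a+b+1)/2$ is Gauss's second summation theorem (DLMF 15.4.28) rather than Bailey's theorem (15.4.29, with $b=1-a$ and free $c$); here $b=1-a$ and $c=1=(a+b+1)/2$ simultaneously, so both apply and the slip is harmless.
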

\begin{proof}
On the one hand, by equation \eqref{eq:decompphi} we have
\begin{equation*}
 {}_2F_1\left( m, 1-m,1;1/2\right)=-\frac{1}{2\pi}\Phi_k(0).
\end{equation*}
On the other hand, equation \eqref{defphi} yields
\begin{equation*}
\Phi_k(0)=\frac{\Gamma(k-1/4)\Gamma(3/4-k)}{\Gamma(1/2)}=(-1)^k\sqrt{2\pi}\frac{\Gamma(k-1/4)}{\Gamma(k+1/4)}.
\end{equation*}
The last two equalities imply \eqref{eq:hyp12}.

As a consequence of \cite[Eq.~15.8.25]{HMF} we obtain
\begin{multline*}
 {}_2F_1\left( m, 1-m,1;x\right)=\frac{\Gamma(1/2)}{\Gamma(m/2+1/2)\Gamma(1-m/2)} \times\\
{}_2F_1\left( m/2, 1/2-m/2,1/2;(1-2x)^2\right)+\\
(1-2x)\frac{\Gamma(-1/2)}{\Gamma(m/2)\Gamma(1/2-m/2)} {}_2F_1\left( m/2+1/2, 1-m/2,3/2;(1-2x)^2\right).
\end{multline*}
Then equality \eqref{eq:derhyp12} follows by differentiating the last expression in $x$ and setting $x=1/2$.
\end{proof}

\begin{lem}\label{lem:diffur}
The functions $$ {}_2F_1\left( m, 1-m,1;y\right)\text{ and }{}_2F_1\left( m, 1-m,1;1-y\right)$$ are solutions of  differential equation
\begin{equation}\label{eq:diffur}
y(1-y)F''(y)+(1-2y)F'(y)+m(m-1)F(y)=0.
\end{equation}
\end{lem}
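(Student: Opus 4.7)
The plan is to recognize that the stated ODE is exactly the Gauss hypergeometric differential equation specialized to the parameters $(a,b,c) = (m, 1-m, 1)$, and then exploit a simple symmetry to handle the second solution.

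First I would recall the general hypergeometric ODE satisfied by $w(z) = {}_2F_1(a,b,c;z)$:
\begin{equation*}
z(1-z)w''(z) + \bigl[c - (a+b+1)z\bigr]w'(z) - ab\,w(z) = 0.
\end{equation*}
Substituting $a=m$, $b=1-m$, $c=1$ gives $a+b+1 = 2$ and $ab = m(1-m) = -m(m-1)$, so this becomes precisely
\begin{equation*}
z(1-z)w''(z) + (1-2z)w'(z) + m(m-1)\,w(z) = 0.
\end{equation*}
Taking $z=y$ immediately shows that $F_1(y) := {}_2F_1(m,1-m,1;y)$ satisfies \eqref{eq:diffur}.

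Next I would handle $F_2(y) := {}_2F_1(m,1-m,1;1-y)$ by the change of variable $z = 1-y$. Writing $F_2(y) = g(1-y)$ with $g(z) := {}_2F_1(m,1-m,1;z)$, the chain rule gives $F_2'(y) = -g'(1-y)$ and $F_2''(y) = g''(1-y)$. Plugging these into the left-hand side of \eqref{eq:diffur} and using $y(1-y) = (1-z)z$ and $1-2y = -(1-2z)$, I obtain
\begin{equation*}
z(1-z)g''(z) + (1-2z)g'(z) + m(m-1)\,g(z),
\end{equation*}
which vanishes by the previous paragraph. Hence $F_2$ is also a solution.

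There is no real obstacle here: the only thing to notice is the built-in $y \leftrightarrow 1-y$ symmetry of the coefficients $y(1-y)$, $(1-2y)$, and $m(m-1)$ in \eqref{eq:diffur}, which forces the differential equation to be invariant under that substitution and therefore admits both ${}_2F_1(m,1-m,1;y)$ and ${}_2F_1(m,1-m,1;1-y)$ as solutions. (As a sanity check, these are the standard Kummer pair of solutions at the regular singular points $0$ and $1$ for this specific hypergeometric equation, where the exponent difference at $c=1$ is zero, explaining the logarithmic behavior observed in the previous lemma.)
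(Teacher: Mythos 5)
Your proposal is correct and takes the same route as the paper, which simply cites the hypergeometric differential equation; you have merely written out the specialization $(a,b,c)=(m,1-m,1)$ and the $y\leftrightarrow 1-y$ symmetry that the paper leaves implicit. Both computations check out, so nothing further is needed.
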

\begin{proof}
This follows from the differential equation for hypergeometric functions.
\end{proof}

\subsection{Approximation of $\Phi_k$}

In order to find a Liouville-Green approximation for $\Phi_k(y)$ we use formula \eqref{eq:reprphi} and study separately each of the hypergeometric functions $${}_2F_1\left( m, 1-m,1;y\right)\text{
 and }{}_2F_1\left( m, 1-m,1;1-y\right).$$ 
As shown in Lemma \ref{lem:diffur}, these functions are solutions of differential equation \eqref{eq:diffur}
that was already approximated in \cite[Section~5.2]{BF}. So our problem reduces to computation of the Liouville-Green constants $C_Y$ and $C_J$ in the approximation of  ${}_2F_1\left( m, 1-m,1;1-y\right).$

For the reader's convenience, we  briefly recall the required results of \cite[Section~5.2]{BF}.
It follows from Lemma \ref{lem:diffur} that the functions 
\begin{equation}G_1(y):={}_2F_1\left( m, 1-m,1;y\right)\sqrt{y(1-y)},
\end{equation} 
\begin{equation}G_2(y):={}_2F_1\left( m, 1-m,1;1-y\right)\sqrt{y(1-y)}
\end{equation} 
are solutions of differential equation
\begin{equation}\label{diffurufg}
G''(y)=(u^2f(y)+g(y))G(y),
\end{equation}
where
\begin{equation}
u:=2k-1,\quad f(y):=-\frac{1}{y(1-y)},
\end{equation}
\begin{equation}
g(y):=-\frac{1}{4y^2(1-y)^2}+\frac{1}{4y(1-y)}.
\end{equation}
Making the change of variables
\begin{equation}\label{req2}
Z(y):=\frac{G(y)}{\alpha(y)}, \quad \alpha(y):=\frac{(y-y^2)^{1/4}}{2(\arcsin{\sqrt{y}})^{1/2}},
\end{equation}
\begin{equation}
\xi:=4\arcsin^2{\sqrt{y}},
\end{equation}
we transform equation  \eqref{diffurufg} into the following shape
\begin{equation}\label{diffurzxi}
\frac{d^2Z}{d \xi^2}+\left[\frac{u^2}{4\xi}+\frac{1}{4\xi^2}+\frac{\psi(\xi)}{\xi} \right]Z=0
\end{equation}
with
\begin{equation}
\psi(\xi):=\frac{1}{16 \sin^2{\sqrt{\xi}}}-\frac{1}{16\xi}.
\end{equation}

Removing the summand with $\psi(\xi)/\xi$ in equation \eqref{diffurzxi}, we have
\begin{equation}\label{diffurjy}
\frac{d^2Z}{d \xi^2}+\left[\frac{u^2}{4\xi}+\frac{1}{4\xi^2} \right]Z=0.
\end{equation}
The solutions of \eqref{diffurjy} are defined by
\begin{equation}
Z_C=\sqrt{\xi}C_0(u\sqrt{\xi}),
\end{equation}
where $C_i$ is either $J$ or $Y$ Bessel function of index $i$.

Then according to  \cite[Chapter~12]{O}  solutions of original differential equation \eqref{diffurzxi} can be found in the form
\begin{equation}\label{solution}
Z_C(\xi)=\sqrt{\xi}C_0(u\sqrt{\xi})\sum_{n=0}^{\infty}\frac{A(n;\xi)}{u^{2n}}-\frac{\xi}{u}C_1(u\sqrt{\xi})\sum_{n=0}^{\infty}\frac{B(n;\xi)}{u^{2n}}.
\end{equation}
In order to determine coefficients $A(n;\xi)$, $B(n;\xi)$ we use differential equations (see \cite[Eq.~8.491(3)]{GR})
for functions
\begin{equation}
W(\xi):=\sqrt{\xi}C_0(u\sqrt{\xi}), \quad V(\xi):=\xi C_1(u\sqrt{\xi})
\end{equation}
and substitute \eqref{solution} in equation \eqref{diffurzxi}. This yields
\begin{equation}
W(\xi)\sum_{n=0}^{\infty}\frac{C_n(\xi)}{u^{2n}}-V(\xi)\sum_{n=0}^{\infty}\frac{D_n(\xi)}{u^{2n-1}}=0,
\end{equation}
where
\begin{equation*}
C_n(\xi):=A''(n;\xi)+\frac{1}{\xi}A'(n;\xi)-\frac{\psi(\xi)}{\xi}A(n;\xi)-B'(n;\xi)-\frac{B(n;\xi)}{2\xi},
\end{equation*}
\begin{equation*}
D_n(\xi):=B''(n-1;\xi)+\frac{1}{\xi}B'(n-1;\xi)-\frac{\psi(\xi)}{\xi}B(n-1;\xi)+\frac{1}{\xi}A'(n;\xi).
\end{equation*}

Letting $C_n(\xi)=D_n(\xi)=0$, we find the required recurrence relations
\begin{equation}\label{recurrence1}
A(n;\xi)=-\xi B'(n-1;\xi)+\int_{0}^{\xi}\psi(x)B(n-1;x)dx+\lambda_n,
\end{equation}
\begin{equation}\label{recurrence2}
\sqrt{\xi}B(n;\xi)=\int_{0}^{\xi}\frac{1}{\sqrt{x}}\left(xA''(n;x)+A'(n;x)-\psi(x)A(n;x) \right)dx
\end{equation}
for some real constants of integration $\lambda_n$.

Assume that $A(0;\xi)=1$. Then
\begin{equation}\label{eq:bo}
B(0;\xi)=-\frac{1}{8\sqrt{\xi}}\left(\cot{\sqrt{\xi}}-\frac{1}{\sqrt{\xi}}\right),
\end{equation}
\begin{multline}\label{eq:a1}
A(1;\xi)=\frac{1}{8}\left(\frac{1}{\xi}-\frac{\cot{\sqrt{\xi}}}{2\sqrt{\xi}}-\frac{1}{2\sin^2{\sqrt{\xi}}} \right)\\
-\frac{1}{128}\left(\cot{\sqrt{\xi}}-\frac{1}{\sqrt{\xi}} \right)^2+\lambda_1.
\end{multline}

Furthermore, solutions \eqref{solution} can be approximated by finite series using \cite[Theorem~4.1,~p.~444]{O} or  \cite[Theorem~1]{BD}. 
\begin{thm}\label{LGphi}
Let $\xi_2=\pi^2/4$. For each value of $u$ and each nonnegative integer $N$ equation \eqref{diffurzxi} has solutions $Z_Y(\xi)$, $Z_J(\xi)$ which are infinitely differentiable in $\xi$ on interval $(0, \xi_2)$, and are given by
\begin{multline}\label{zyxi}
Z_Y(\xi)=\sqrt{\xi}Y_0(u\sqrt{\xi})\sum_{n=0}^{N}\frac{A_Y(n;\xi)}{u^{2n}}-\\
\frac{\xi}{u}Y_1(u\sqrt{\xi})\sum_{n=0}^{N-1}\frac{B_Y(n;\xi)}{u^{2n}}+\epsilon_{2N+1,1}(u,\xi),
\end{multline}
\begin{multline}\label{eq:zjapprox}
Z_J(\xi)=\sqrt{\xi}J_0(u\sqrt{\xi})\sum_{n=0}^{N}\frac{A_J(n;\xi)}{u^{2n}}-\\
\frac{\xi}{u}J_1(u\sqrt{\xi})\sum_{n=0}^{N-1}\frac{B_J(n;\xi)}{u^{2n}}+\epsilon_{2N+1,2}(u,\xi),
\end{multline}
where
\begin{equation}\label{zyxi2}
\epsilon_{2N+1,1}(u,\xi)\ll \frac{\sqrt{\xi}|Y_0(u\sqrt{\xi})|}{u^{2N+1}}\sqrt{\xi_2-\xi},
\end{equation}
\begin{equation}
\epsilon_{2N+1,2}(u,\xi)\ll \frac{\sqrt{\xi}|J_0(u\sqrt{\xi})|}{u^{2N+1}}\min{(\sqrt{\xi},1)}
\end{equation}
and coefficients $(A_Y(n;\xi),B_Y(n;\xi))$, $(A_J(n;\xi),B_J(n;\xi))$ are defined by \eqref{recurrence1}-\eqref{recurrence2}.

\end{thm}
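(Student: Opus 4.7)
The plan is to apply the general Liouville--Green theorems with Bessel-function approximants from \cite[Theorem~4.1, p.~444]{O} and \cite[Theorem~1]{BD} to equation \eqref{diffurzxi}. Most of the setup is already in place: the formal ansatz \eqref{solution} has been written down, and the recurrences \eqref{recurrence1}, \eqref{recurrence2}, together with the normalization $A(0;\xi)=1$, determine the coefficients $A(n;\xi)$, $B(n;\xi)$ up to the free constants $\lambda_{n}$. What remains is to verify the hypotheses of Olver's theorem on the interval $(0,\xi_{2})$ and to trace through the resulting error estimate.

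First I would check regularity of $\psi(\xi)=(16\sin^{2}\sqrt{\xi})^{-1}-(16\xi)^{-1}$ on $[0,\xi_{2})$. Since $\sin^{2}\sqrt{\xi}=\xi-\xi^{2}/3+O(\xi^{3})$, the apparent pole of the first term at $\xi=0$ cancels against $1/(16\xi)$, so $\psi$ extends to an analytic function at the origin. The choice $\xi_{2}=\pi^{2}/4$ keeps $\sin\sqrt{\xi}$ away from zero, so $\psi\in C^{\infty}[0,\xi_{2})$. In particular all the integrals appearing in \eqref{recurrence1}, \eqref{recurrence2} converge at the lower endpoint and produce smooth functions on $[0,\xi_{2})$.

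Next I would specialize the formal expansion to obtain the two distinct families of solutions $Z_{J}$ and $Z_{Y}$. They differ only in the choice of integration constants $\lambda_{n}$: for the $J$-case one fixes $\lambda_{n}$ so that every $A_{J}(n;\xi)$ is regular at $\xi=0$ (matching the recessive behavior of $J_{0}(u\sqrt{\xi})$ there), and for the $Y$-case one fixes them using data at $\xi=\xi_{2}$ instead. The truncated series in \eqref{zyxi} and \eqref{eq:zjapprox} are then inserted into \eqref{diffurzxi} and the resulting inhomogeneity, which is of size $O(u^{-2N})\psi(\xi)/\xi$, is treated by variation of parameters against the Wronskian of the leading Bessel pair. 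This is exactly the framework of \cite[Theorem~4.1]{O}; a single application, combined with the sharper error shape in \cite[Theorem~1]{BD}, produces solutions $Z_{J}$, $Z_{Y}$ of the exact equation together with the claimed error terms.

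The main obstacle is the precise form of the remainders. For $Z_{J}$ the Green-function iteration is anchored at $\xi=0$, and the resulting error is controlled by $|J_{0}(u\sqrt{\xi})|$ times an integral of $\psi$ that is bounded by $O(\min(\sqrt{\xi},1))$, which yields the stated $\epsilon_{2N+1,2}$. For $Z_{Y}$ one cannot integrate from $0$, since $Y_{0}$ is dominant there, so the iteration is anchored at $\xi=\xi_{2}$; the resulting error integral contributes the factor $\sqrt{\xi_{2}-\xi}$ in \eqref{zyxi2}. Once these two choices of anchor are made and the boundedness of $\psi$ on $(0,\xi_{2})$ is used, the statement follows directly from the cited theorems and no further work on the specific equation is required.
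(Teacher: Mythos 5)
Your proposal is correct and follows essentially the same route as the paper, which proves Theorem \ref{LGphi} simply by invoking \cite[Theorem~4.1, p.~444]{O} and \cite[Theorem~1]{BD} for equation \eqref{diffurzxi} after the change of variables and the recurrences \eqref{recurrence1}--\eqref{recurrence2} have been set up. Your additional remarks — the analyticity of $\psi$ at $\xi=0$, and the different anchoring of the error integrals at $0$ for $Z_J$ versus at $\xi_2$ for $Z_Y$, which produces the factors $\min(\sqrt{\xi},1)$ and $\sqrt{\xi_2-\xi}$ respectively — correctly spell out the hypotheses that the paper leaves implicit in the citation.
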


 Functions $\xi^{1/4}(\sin{\sqrt{\xi}})^{1/2}G_1(\sin^2{\sqrt{\xi}/2})$ and $Z_J(\xi)$ are recessive solutions of equation \eqref{diffurzxi}
 as $\xi \rightarrow 0$. Therefore,  there is $c_0$ such that
\begin{equation}\label{f1coeff}
\xi^{1/4}(\sin{\sqrt{\xi}})^{1/2}G_1(\sin^2{\sqrt{\xi}/2})=c_0Z_J(\xi).
\end{equation}
The value of constant $c_0$ is determined by computing the limit of the left and right-hand sides  of equation
\eqref{f1coeff} as $\xi \rightarrow 0$.
On the one hand,
\begin{equation}
\lim_{\xi \rightarrow 0}{}_2F_{1}(k,1-k,1;\sin^2{\sqrt{\xi}/2})=1.
\end{equation}
On the other hand,
\begin{equation}
Z_J(\xi)=\sqrt{\xi}\sum_{n=0}^{N}\frac{A_J(n;\xi)}{u^{2n}}+O(\xi) \text{ as } \xi \rightarrow 0.
\end{equation}
Choosing $A_J(n;\xi)$ such that $A_J(0;0)=1$ and $A_J(n;0)=0$ for $n \geq 1$ we find
that $c_0=1$.

To sum up, we proved the following Lemma.
\begin{lem}\label{lem:g1}
Let $\xi_2=\pi^2/4$. For $\xi \in (0,\xi_2)$ one has
\begin{equation}
\xi^{1/4}(\sin\sqrt{\xi})^{1/2}G_1\left( \sin^2{\frac{\sqrt{\xi}}{2}}\right)=Z_J(\xi),
\end{equation}
where $Z_J(\xi)$ is given by \eqref{eq:zjapprox}.
\end{lem}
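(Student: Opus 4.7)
The strategy is essentially assembled in the paragraphs preceding the statement: both sides solve the same second-order ODE \eqref{diffurzxi} on $(0,\xi_2)$, both are recessive at the singular point $\xi=0$, and the space of recessive solutions is one-dimensional. So the two sides must agree up to a multiplicative constant $c_0$, and the task is to pin down $c_0=1$ via leading-order asymptotics.

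First I would verify that $\xi^{1/4}(\sin\sqrt{\xi})^{1/2}G_1(\sin^2(\sqrt{\xi}/2))$ is a solution of \eqref{diffurzxi}. By Lemma \ref{lem:diffur}, $G_1(y)$ solves \eqref{diffurufg}; plugging the change of variables $y = \sin^2(\sqrt{\xi}/2)$ into the substitution $Z(y) = G(y)/\alpha(y)$ from \eqref{req2} and using $\sqrt{y(1-y)} = \tfrac{1}{2}\sin\sqrt{\xi}$ and $\arcsin\sqrt{y} = \sqrt{\xi}/2$, the factor $1/\alpha$ reduces to a multiple of $\xi^{1/4}/(\sin\sqrt{\xi})^{1/2}$, and the resulting $Z$ is (up to tracking a harmless constant) precisely $\xi^{1/4}(\sin\sqrt{\xi})^{1/2}\,{}_2F_1(m,1-m,1;\sin^2(\sqrt{\xi}/2))$. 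Theorem \ref{LGphi} shows $Z_J$ solves the same equation on $(0,\xi_2)$.

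Next I would check that both sides are recessive as $\xi\to 0^+$. Since ${}_2F_1(m,1-m,1;0)=1$, the left-hand side behaves like $\xi^{1/4}\cdot\xi^{1/4}\cdot\sqrt{\xi}/2\sim \sqrt{\xi}/2$ (modulo the constants swept into the setup above), which is the recessive branch — the dominant branch of \eqref{diffurzxi} at $\xi=0$ carries a $\log\xi$ factor coming from a $Y_0$-type contribution. For the right-hand side, the Bessel expansions $J_0(u\sqrt{\xi})\to 1$ and $\xi J_1(u\sqrt{\xi})/u = O(\xi^{3/2})$ imply $Z_J(\xi)\sim \sqrt{\xi}\,A_J(0;0)$ near $\xi=0$. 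Thus both sides are recessive, and proportionality follows.

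To fix the constant, I would normalise the coefficients in the recurrence \eqref{recurrence1}--\eqref{recurrence2}, which are only determined up to the integration constants $\lambda_n$, by imposing $A_J(0;0)=1$ and $A_J(n;0)=0$ for $n\ge 1$, so that $Z_J(\xi)/\sqrt{\xi}\to 1$. Then dividing the tentative identity $\xi^{1/4}(\sin\sqrt{\xi})^{1/2}G_1(\sin^2(\sqrt{\xi}/2)) = c_0 Z_J(\xi)$ by $\sqrt{\xi}$ and taking $\xi\to 0$ yields $c_0=1$. The one delicate point is keeping track of the numerical constants from $\alpha(y)$ and from the identities $\sqrt{y(1-y)}=\tfrac12\sin\sqrt{\xi}$ and $\arcsin\sqrt{y}=\sqrt{\xi}/2$; the cleanest safeguard is to compute both sides to two leading orders in $\sqrt{\xi}$ and verify agreement.
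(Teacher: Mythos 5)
Your proposal follows the paper's proof exactly: both sides are recessive solutions of \eqref{diffurzxi} as $\xi\to 0$, hence proportional, and the constant is fixed to be $1$ by comparing leading terms under the normalization $A_J(0;0)=1$, $A_J(n;0)=0$. The only blemish is the arithmetic slip $\xi^{1/4}\cdot\xi^{1/4}\cdot\sqrt{\xi}/2=\xi/2$ (not $\sqrt{\xi}/2$), which traces back to the ambiguity over whether $G_1$ carries the factor $\sqrt{y(1-y)}$; once the left-hand side is read as $\xi^{1/4}(\sin\sqrt{\xi})^{1/2}\,{}_2F_1\left(m,1-m,1;\sin^2(\sqrt{\xi}/2)\right)$, as your first paragraph in effect does, the limit comparison gives $c_0=1$ exactly as you describe.
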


This concludes the summary of results of \cite[Section~5.2]{BF}. Our final goal is to
compute $C_Y=C_Y(u)$ and $C_J=C_J(u)$ such that
\begin{equation}\label{eq:cycj2}
\xi^{1/4}(\sin{\sqrt{\xi}})^{1/2}G_2(\sin^2{\sqrt{\xi}/2})=C_YZ_Y(\xi)+C_JZ_J(\xi).
\end{equation}

Note that there exist $c_1,c_2$ such that
\begin{multline}\label{eq:zyxi}
Z_Y(\xi)=\left(c_1G_1\left( \sin^2{\frac{\sqrt{\xi}}{2}}\right)+c_2G_2\left( \sin^2{\frac{\sqrt{\xi}}{2}}\right) \right)\times \\ \xi^{1/4}(\sin\sqrt{\xi})^{1/2} =
c_1Z_J(\xi)+\xi^{1/4}(\sin\sqrt{\xi})^{1/2}c_2G_2\left( \sin^2{\frac{\sqrt{\xi}}{2}}\right).
\end{multline}
The last two equalities imply
\begin{equation}\label{eq:CYCJ}
C_Y=\frac{1}{c_2},\quad C_J=-\frac{c_1}{c_2}.
\end{equation}

\begin{lem}\label{lem:c1c2}
One has
\begin{multline}
2c_1=(-1)^{k+1}\sqrt{2\pi}\frac{\Gamma(k+1/4)}{\Gamma(k-1/4)}\frac{Z_Y(\xi_2)}{\xi_{2}^{1/4}}+(-1)^k\sqrt{2\pi}\times \\\frac{\Gamma(k-1/4)}{\Gamma(k+1/4)} \xi_{2}^{1/4}\left(Z_Y'(\xi_2)-\frac{Z_Y(\xi_2)}{4\xi_2} \right),
\end{multline}
\begin{multline}
2c_2=(-1)^{k+1}\sqrt{2\pi}\frac{\Gamma(k+1/4)}{\Gamma(k-1/4)}\frac{Z_Y(\xi_2)}{\xi_{2}^{1/4}}-(-1)^k\sqrt{2\pi}\times \\\frac{\Gamma(k-1/4)}{\Gamma(k+1/4)} \xi_{2}^{1/4}\left(Z_Y'(\xi_2)-\frac{Z_Y(\xi_2)}{4\xi_2} \right).
\end{multline}
\end{lem}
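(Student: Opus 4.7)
The plan is to determine the constants $c_1, c_2$ from a pair of linear relations obtained by evaluating equation~\eqref{eq:zyxi} and its $\xi$-derivative at the symmetric point $\xi = \xi_2 = \pi^2/4$, which corresponds to $y = \sin^2(\sqrt{\xi_2}/2) = 1/2$. The symmetry $G_2(y) = G_1(1-y)$ forces $G_1(1/2) = G_2(1/2)$ and $G_1'(1/2) = -G_2'(1/2)$, so these two equations will decouple into conditions on $c_1 + c_2$ and $c_1 - c_2$ respectively.

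For the value relation, I substitute $\xi = \xi_2$ into \eqref{eq:zyxi}. Since $\sin\sqrt{\xi_2} = 1$, the prefactor $\xi^{1/4}(\sin\sqrt{\xi})^{1/2}$ collapses to $\xi_2^{1/4}$, and both hypergeometric values reduce to ${}_2F_1(m, 1-m, 1; 1/2)$, whose closed form is supplied by~\eqref{eq:hyp12}. This gives the formula for $c_1 + c_2$.

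For the derivative relation, set $\mathcal{H}(\xi) := \xi^{1/4}(\sin\sqrt{\xi})^{1/2}$ and $y(\xi) := \sin^2(\sqrt{\xi}/2)$. Direct computation yields $\mathcal{H}(\xi_2) = \xi_2^{1/4}$ and $y'(\xi_2) = 1/(2\pi)$, and, crucially, $\mathcal{H}'(\xi_2) = \mathcal{H}(\xi_2)/(4\xi_2)$, because the second term produced by the product rule on $\mathcal{H}$ carries a factor $\cos\sqrt{\xi_2} = 0$. Combined with the symmetry relations above, differentiating \eqref{eq:zyxi} collapses to
\begin{equation*}
Z_Y'(\xi_2) - \frac{Z_Y(\xi_2)}{4\xi_2} = \frac{\xi_2^{1/4}}{2\pi}(c_1 - c_2)\,G_1'(1/2),
\end{equation*}
and~\eqref{eq:derhyp12} supplies $G_1'(1/2)$, so we recover $c_1 - c_2$.

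Summing and subtracting the two relations gives $2c_1$ and $2c_2$; the numerical identity $\pi = \sqrt{2\pi}\,\xi_2^{1/4}$ (from $\xi_2 = \pi^2/4$) is used to put the $c_1 - c_2$ part into the form $\sqrt{2\pi}\,\xi_2^{1/4}(\cdots)$ displayed in the statement. The argument is entirely mechanical once the symmetry observations are noted; the only hazard is bookkeeping the accumulating factors of $2$, $\pi$, $\sqrt{2\pi}$, and the signs $(-1)^k$.
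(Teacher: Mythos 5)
Your proposal is correct and follows essentially the same route as the paper: evaluate the linear relation \eqref{eq:zyxi} and its $\xi$-derivative at the symmetric point $\xi_2$ (i.e.\ $y=1/2$), use $G_1(1/2)=G_2(1/2)$ and $G_1'(1/2)=-G_2'(1/2)$ to decouple the system into $c_1+c_2$ and $c_1-c_2$, and insert the closed forms from Lemma \ref{lem:hyp}. Your explicit verification that $\mathcal{H}'(\xi_2)=\mathcal{H}(\xi_2)/(4\xi_2)$ (via $\cos\sqrt{\xi_2}=0$) and $y'(\xi_2)=1/(2\pi)$ just spells out the derivative relation the paper writes down directly.
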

\begin{proof}
To determine coefficients $c_1,c_2$ we consider the pair of equations
\begin{equation*}
Z_Y(\xi_2)=\xi_{2}^{1/4}\left( c_1G_1(1/2)+c_2G_2(1/2)\right),
\end{equation*}
\begin{equation*}
Z_Y'(\xi_2)=\frac{Z_Y(\xi_2)}{4\xi_2}+\frac{1}{4\xi_{2}^{1/4}}\left( c_1G_1'(1/2)+c_2G_2'(1/2)\right).
\end{equation*}
Note that $$G_1(1/2)=G_2(1/2)\text{ and }G_1'(1/2)=-G_2'(1/2).$$
Therefore,
\begin{equation*}
(c_1+c_2)G_1(1/2)=\frac{Z_Y(\xi_2)}{\xi_{2}^{1/4}},
\end{equation*}
\begin{equation*}
(c_1-c_2)G_1'(1/2)=4\xi_{2}^{1/4}\left(Z_Y'(\xi_2)-\frac{Z_Y(\xi_2)}{4\xi_{2}}\right).
\end{equation*}
The assertion follows by Lemma \ref{lem:hyp}.
\end{proof}

\begin{lem}\label{lem:zyzyd} For $\xi_2=\pi^2/4$ one has
\begin{equation}\label{eq:zyxi2}
Z_Y(\xi_2)=\frac{(-1)^{k+1}}{\sqrt{2u}}\left[1+\frac{1}{u^2}\left( \lambda_1-\frac{1}{16}\right)+O\left( \frac{1}{u^3}\right)\right],
\end{equation}
\begin{equation}\label{eq:zyxi2d}
Z_{Y}'(\xi_2)=\frac{(-1)^{k+1}}{\sqrt{2u}}\left[\frac{u}{\pi}+\frac{1}{\pi^2}+\frac{2\lambda_1+1/8}{2\pi u}
+O\left(\frac{1}{u^2} \right)\right].
\end{equation}
\end{lem}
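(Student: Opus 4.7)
My plan is to substitute $\xi=\xi_2=\pi^2/4$ into the expansion \eqref{zyxi} and apply the classical Hankel asymptotic expansions of $Y_0$ and $Y_1$ at the argument $z = u\sqrt{\xi_2} = u\pi/2 = k\pi-\pi/2$. Because this is a half-integer multiple of $\pi$, the sines and cosines in the Hankel formulas evaluate to $\pm(-1)^k \sqrt{2}/2$, producing the clean expansions
\[
Y_0\!\left(\tfrac{u\pi}{2}\right) = \frac{(-1)^{k+1}\sqrt{2}}{\pi\sqrt{u}}\left[1 - \tfrac{1}{4\pi u} - \tfrac{9}{32\pi^2 u^2} + O(u^{-3})\right],
\]
\[
Y_1\!\left(\tfrac{u\pi}{2}\right) = \frac{(-1)^{k}\sqrt{2}}{\pi\sqrt{u}}\left[1 - \tfrac{3}{4\pi u} + \tfrac{15}{32\pi^2 u^2} + O(u^{-3})\right].
\]
In parallel I would evaluate the Liouville--Green coefficients at $\xi_2$ using $\cot(\pi/2)=0$ and $\sin(\pi/2)=1$: formulas \eqref{eq:bo} and \eqref{eq:a1} give $A_Y(0;\xi_2)=1$, $B_Y(0;\xi_2)=1/(2\pi^2)$, and $A_Y(1;\xi_2) = 15/(32\pi^2) - 1/16 + \lambda_1$, while direct differentiation of \eqref{eq:bo} yields $A_Y'(0;\xi_2)=0$ and $B_Y'(0;\xi_2) = 1/(4\pi^2) - 2/\pi^4$.

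For the first claim, substituting these into $Z_Y(\xi_2)$ and pulling out the prefactor $\sqrt{\xi_2}\,Y_0(u\pi/2) = \frac{(-1)^{k+1}}{\sqrt{2u}}(1+\cdots)$, the decisive point is that the $u^{-1}$ coefficient inside the bracket vanishes: the Hankel correction $-1/(4\pi)$ coming from $Y_0$ is exactly offset by the $+1/(4\pi)$ arising from the leading-order contribution of $-(\xi_2/u)Y_1(u\pi/2)\,B_Y(0;\xi_2)$. At order $u^{-2}$, the three inputs $A_Y(1;\xi_2)$, the Hankel correction $-9/(32\pi^2)$ from $Y_0$, and the combination $-6/(32\pi^2)$ arising from the $Y_1$-term together sum to precisely $\lambda_1 - 1/16$, giving \eqref{eq:zyxi2}.

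For the derivative, I would differentiate \eqref{zyxi} using the Bessel identities $Y_0'(z) = -Y_1(z)$ and $Y_1'(z) = Y_0(z) - Y_1(z)/z$ to obtain an explicit expression for $Z_Y'(\xi_2)$ in terms of $Y_0(u\pi/2)$, $Y_1(u\pi/2)$ and the evaluated $A_Y, B_Y$ and their $\xi$-derivatives at $\xi_2$, being careful with the factor $\sqrt{\xi_2}/2 = \pi/4$ rather than $\pi/2$. The leading term $-(u/2)Y_1(u\pi/2)$ supplies the $(u/\pi)$ entry in \eqref{eq:zyxi2d}; the $1/\pi^2$ entry assembles from the coefficient $1/\pi - (\pi/4)B_Y(0;\xi_2) = 7/(8\pi)$ multiplying $Y_0(u\pi/2)$ combined with the Hankel correction $-3/(4\pi u)$ to $-(u/2)Y_1(u\pi/2)$; and the $(2\lambda_1 + 1/8)/(2\pi u)$ entry is extracted by tracking $u^{-3/2}$ contributions, where $A_Y(1;\xi_2)$ and $B_Y'(0;\xi_2)$ enter. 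The main obstacle is the intricate bookkeeping: numerous contributions at half-integer powers of $u$ must be catalogued and their cancellations verified before an expansion in integer powers of $u^{-1}$ emerges; a subsidiary point is to choose the truncation level $N$ in \eqref{zyxi} large enough that Theorem \ref{LGphi} supplies the claimed $O(u^{-3})$ and $O(u^{-2})$ error terms.
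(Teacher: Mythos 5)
Your computation is essentially the paper's own proof: evaluate the Liouville--Green coefficients at $\xi_2=\pi^2/4$ (where $\cot\sqrt{\xi_2}=0$), expand $Y_0(u\pi/2)$ and $Y_1(u\pi/2)$ by Hankel's asymptotics at the half-integer multiple of $\pi$, and track the cancellations. All of your quoted numbers agree with the paper's: $B_Y(0;\xi_2)=1/(2\pi^2)$, $A_Y(1;\xi_2)=15/(32\pi^2)-1/16+\lambda_1$, $B_Y'(0;\xi_2)=1/(4\pi^2)-2/\pi^4$, the vanishing of the $u^{-1}$ coefficient in $Z_Y(\xi_2)$ via $-1/(4\pi)+1/(4\pi)=0$, the sum $\tfrac{15-9-6}{32\pi^2}+\lambda_1-\tfrac1{16}$ at order $u^{-2}$, the constant $7/(8\pi)$ in front of $Y_0$ in the derivative, and the assembly $\tfrac{7}{4\pi^2}-\tfrac{3}{4\pi^2}=\tfrac1{\pi^2}$.

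There is one gap, concentrated in your final sentence. You propose to control the error by taking $N$ in \eqref{zyxi} ``large enough that Theorem \ref{LGphi} supplies the claimed error terms.'' For \eqref{eq:zyxi2} this is fine, since the bound \eqref{zyxi2} contains the factor $\sqrt{\xi_2-\xi}$ and hence $\epsilon_{2N+1,1}(u,\xi_2)=0$. But for \eqref{eq:zyxi2d} you must differentiate \eqref{zyxi} in $\xi$, and Theorem \ref{LGphi} gives no bound whatsoever on $\partial_\xi\,\epsilon_{2N+1,1}$; worse, a function that is $O(\sqrt{\xi_2-\xi})$ can have an unbounded derivative as $\xi\to\xi_2$, so no choice of $N$ rescues the argument from the stated theorem alone. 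The paper closes this by invoking \cite[Theorem~1]{BD} with $N=1$, which gives the stronger statement that \emph{both} $\epsilon_{3,1}(u;\xi_2)=0$ \emph{and} $\partial_\xi\epsilon_{3,1}(u;\xi)\big|_{\xi=\xi_2}=0$, so the truncated series is exact at $\xi_2$ for the function and its derivative, and the only asymptotic errors in \eqref{eq:zyxi2}--\eqref{eq:zyxi2d} come from the Hankel expansions of the Bessel factors. You need this (or an equivalent derivative bound from Olver's theory) to make the second half of the lemma rigorous.
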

\begin{proof}

Applying \cite[Theorem~1]{BD} with $N=1$ we obtain
\begin{equation*}
\epsilon_{3,1}(u;\xi_2)=0 \text{ and }\frac{\partial}{\partial \xi}\epsilon_{3,1}(u;\xi)\bigg|_{\xi=\xi_2}=0.
\end{equation*}
Therefore,
\begin{equation*}
Z_Y(\xi_2)=\sqrt{\xi_2}Y_0(u\sqrt{\xi_2})\left(1+\frac{A_Y(1,\xi_2)}{u^2}\right)-
\xi_2Y_1(u\sqrt{\xi_2})\frac{B_Y(0,\xi_2)}{u}
\end{equation*}
and
\begin{multline*}
Z_Y'(\xi_2)=\sqrt{\xi_2}Y_0(u\sqrt{\xi_2})\biggl(\frac{1}{2\xi_2}\left[1+\frac{A_Y(1;\xi_2)}{u^{2}}
\right]+\frac{A_Y'(1;\xi_2)}{u^{2}}
-\\ \frac{1}{2}B_Y(0;\xi_2) \biggr)-
\xi_2Y_1(u\sqrt{\xi_2})\biggl( \frac{u}{2\xi_2}\left[1+\frac{A_Y(1;\xi_2)}{u^{2}}
\right]+\\\frac{1}{2\xi_2 u}B_Y(0;\xi_2)
+\frac{1}{u}B_Y'(0;\xi_2) \biggr).
\end{multline*}

By means of the Hankel asymptotic expansion (see \cite[Eq.~10.17.1,~10.17.4]{HMF} and \cite[Eq.~8.451(1,7,8)]{GR}) we evaluate 
\begin{multline*}
\sqrt{\xi_2}Y_0(u\sqrt{\xi_2})=\frac{\pi}{2}Y_0\left((2k-1)\frac{\pi}{2}\right)=\\
\frac{(-1)^{k+1}}{\sqrt{2u}}\left[\sum_{j=0}^{\infty}(-1)^j\frac{a_{2j}(0)}{(\pi u/2)^{2j}}
+\sum_{j=0}^{\infty}(-1)^j\frac{a_{2j+1}(0)}{(\pi u/2)^{2j+1}}\right],
\end{multline*}
\begin{equation*}
\xi_2Y_1(u\sqrt{\xi_2})=\frac{\pi}{2}\frac{(-1)^{k}}{\sqrt{2u}}\left[\sum_{j=0}^{\infty}(-1)^j\frac{a_{2j}(1)}{(\pi u/2)^{2j}}
-\sum_{j=0}^{\infty}(-1)^j\frac{a_{2j+1}(1)}{(\pi u/2)^{2j+1}}\right],
\end{equation*}
where \begin{equation*}
a_j(v)=\frac{\Gamma(v+j+1/2)}{2^j j! \Gamma(v-j+1/2)}.
\end{equation*}
This yields
\begin{multline*}
Z_Y(\xi_2)=\frac{(-1)^{k+1}}{\sqrt{2u}}\left[1+\frac{a_1(0)}{\pi u/2}-\frac{a_2(0)}{(\pi u/2)^2}+O(u^{-3})\right]
\left(1+\frac{A_Y(1;\xi_2)}{u^2} \right)-\\
\frac{\pi}{2}\frac{(-1)^k}{\sqrt{2u}}\left[1-\frac{a_1(1)}{\pi u/2}-\frac{a_2(1)}{(\pi u/2)^2}+O(u^{-3})\right]\frac{B_Y(0,\xi_2)}{u}.
\end{multline*}
Simplifying the expression above, one has
\begin{multline*}
Z_Y(\xi_2)=\frac{(-1)^{k+1}}{\sqrt{2u}}\biggl[1+\frac{1}{u}\left(\frac{2}{\pi}a_1(0)+\frac{\pi}{2}B_Y(0,\xi_2) \right)+\\ \frac{1}{u^2}\left(A_Y(1,\xi_2)-\frac{4}{\pi^2}a_2(0)-a_1(1)B_Y(0,\xi_2) \right)\biggr]+O(u^{-7/2}).
\end{multline*}
Using formulas \eqref{eq:bo} and  \eqref{eq:a1}, we find
\begin{equation*}
a_1(0)=-\frac{1}{8},\quad \frac{2}{\pi}a_1(0)+\frac{\pi}{2}B_Y(0,\xi_2)=0,
\end{equation*}
\begin{equation*}
A_Y(1,\xi_2)-\frac{4}{\pi^2}a_2(0)-a_1(1)B_Y(0,\xi_2)=\lambda_1-\frac{1}{16}.
\end{equation*}
This gives equation \eqref{eq:zyxi2}.

Similarly, we obtain
\begin{multline*}
Z_Y'(\xi_2)=\sqrt{\xi_2}Y_0(u\sqrt{\xi_2})\left[\frac{7}{4\pi^2}+\frac{1}{u^2}\left( \frac{1}{\pi^2}\left(2\lambda_1-\frac{1}{32}\right)-\frac{15}{8\pi^4}\right)\right]-\\
\xi_2Y_1(u\sqrt{\xi_2})\left[u\frac{2}{\pi^2}+\frac{1}{u}\left(\frac{1}{\pi^2}\left( 2\lambda_1+\frac{1}{8}\right)-\frac{1}{16\pi^4} \right)\right].
\end{multline*}
Hankel's expansion for Bessel functions yields the following asymptotics
\begin{multline*}
Z_Y'(\xi_2)=\frac{(-1)^{k+1}}{\sqrt{2u}}\biggl[u\frac{1}{\pi}+\left(\frac{7}{4\pi^2}-\frac{2}{\pi^2}a_1(1)\right)+\\ \frac{1}{u}\left( \frac{7}{2\pi^3}a_1(0)+\frac{\pi}{2}\left(\frac{2\lambda_1+1/8}{\pi^2}-\frac{1}{16\pi^4} \right)-\frac{4}{\pi^3}a_2(1)\right)+\\
\frac{1}{u^2}\left( -\frac{7}{\pi^4}a_2(0)-a_1(1)\left( \frac{2\lambda_1+1/8}{\pi^2}-\frac{1}{16\pi^4}\right)+\frac{8}{\pi^4}a_3(1)\right)+O(u^{-3}) \biggr].
\end{multline*}
Finally, substituting
\begin{equation*}
a_1(0)=-\frac{1}{8}, \quad a_1(1)=\frac{3}{8}, \quad a_2(1)=-\frac{15}{128}
\end{equation*}
we prove equation \eqref{eq:zyxi2d}.
\end{proof}

\begin{cor}\label{corcjcy}
One has
\begin{equation}
C_Y=1+O\left(\frac{1}{k}\right), \quad C_J=O\left(\frac{1}{k^2} \right).
\end{equation}
\end{cor}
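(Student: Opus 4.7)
The plan is to combine the asymptotic expansions of Lemma \ref{lem:zyzyd} with Stirling's formula for the ratios $\Gamma(k\pm 1/4)/\Gamma(k\mp 1/4)$, substitute into the expressions for $2c_1$ and $2c_2$ given by Lemma \ref{lem:c1c2}, and then apply the relations $C_Y=1/c_2$ and $C_J=-c_1/c_2$ from \eqref{eq:CYCJ}. Let us denote by $A$ and $B$ the two summands on the right-hand sides of the formulas of Lemma \ref{lem:c1c2}, so that $2c_1=A+B$ and $2c_2=A-B$.

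The first simplification uses $\xi_2=\pi^2/4$, whence $\xi_2^{1/4}=\sqrt{\pi/2}$ and $4\xi_2=\pi^2$. In the combination $Z_Y'(\xi_2)-Z_Y(\xi_2)/(4\xi_2)$ the constant term $1/\pi^2$ appearing in the expansion of $Z_Y'(\xi_2)$ from Lemma \ref{lem:zyzyd} cancels exactly against the leading part of $Z_Y(\xi_2)/\pi^2$, so that
$$\xi_2^{1/4}\left(Z_Y'(\xi_2)-\frac{Z_Y(\xi_2)}{4\xi_2}\right)=(-1)^{k+1}\frac{\sqrt{u}}{2\sqrt{\pi}}\left(1+O(k^{-2})\right),$$
and similarly $Z_Y(\xi_2)/\xi_2^{1/4}=(-1)^{k+1}\sqrt{2}/\sqrt{2\pi u}\,(1+O(k^{-2}))$. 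Inserting the Stirling expansions
$$\frac{\Gamma(k+1/4)}{\Gamma(k-1/4)}=k^{1/2}\left(1-\frac{1}{4k}+O(k^{-2})\right),\qquad \frac{\Gamma(k-1/4)}{\Gamma(k+1/4)}=k^{-1/2}\left(1+\frac{1}{4k}+O(k^{-2})\right)$$
and collecting factors, I would obtain
$$A=\sqrt{\frac{2k}{u}}\left(1-\frac{1}{4k}+O(k^{-2})\right),\qquad B=-\sqrt{\frac{u}{2k}}\left(1+\frac{1}{4k}+O(k^{-2})\right).$$

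The heart of the proof is the cancellation at the level $k^{-1}$: substituting $u=2k-1$ yields $\sqrt{2k/u}=1+1/(4k)+O(k^{-2})$, whose $+1/(4k)$ term is annihilated by the Stirling $-1/(4k)$ in $A$, and symmetrically $\sqrt{u/(2k)}=1-1/(4k)+O(k^{-2})$ cancels the $+1/(4k)$ in $B$. Consequently $A=1+O(k^{-2})$ and $B=-1+O(k^{-2})$, giving
$$2c_2=A-B=2+O(k^{-2}),\qquad 2c_1=A+B=O(k^{-2}).$$
Hence $c_2=1+O(k^{-2})$, $c_1=O(k^{-2})$, and the stated bounds on $C_Y=1/c_2$ and $C_J=-c_1/c_2$ follow. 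I do not expect any serious obstacle: the constants $\lambda_1$ from Lemma \ref{lem:zyzyd} enter only at order $k^{-2}$, so they are harmless for the first-order cancellation, and the remaining work is bookkeeping of Stirling and Taylor expansions.
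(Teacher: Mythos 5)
Your proposal is correct and follows essentially the same route as the paper: expand $Z_Y(\xi_2)$ and $Z_Y'(\xi_2)-Z_Y(\xi_2)/(4\xi_2)$ via Lemma \ref{lem:zyzyd}, insert the Stirling expansions of $\Gamma(k\pm1/4)/\Gamma(k\mp1/4)$ into Lemma \ref{lem:c1c2}, observe the cancellation at order $k^{-1}$, and conclude from \eqref{eq:CYCJ}. In fact you are slightly more careful than the paper's own write-up, which records only $\Gamma(k-1/4)/\Gamma(k+1/4)=k^{-1/2}+O(k^{-3/2})$; the second-order term $+\tfrac14 k^{-3/2}$ that you keep is exactly what is needed to see $c_1=O(k^{-2})$ rather than merely $O(k^{-1})$.
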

\begin{proof}
By Lemma \ref{lem:zyzyd}
\begin{equation*}
Z_Y'(\xi_2)-\frac{Z_Y(\xi_2)}{4\xi_2}=\frac{(-1)^{k+1}}{\sqrt{2u}}\left(\frac{u}{\pi}+\frac{2\lambda_1+1/8}{2\pi u}+O(u^{-2})\right).
\end{equation*}

It follows from  \cite[Eq.~5.11.13]{HMF} that
\begin{equation*}
\frac{\Gamma(k+1/4)}{\Gamma(k-1/4)}=k^{1/2}-\frac{1}{4}k^{-1/2}+O(k^{-3/2})
\end{equation*}
and 
\begin{equation*}
\frac{\Gamma(k-1/4)}{\Gamma(k+1/4)}=k^{-1/2}+O(k^{-3/2}).
\end{equation*}
Then Lemma \ref{lem:c1c2} gives
\begin{equation*}
c_1=O(k^{-2}), \quad c_2=1+O(k^{-1}).
\end{equation*}
Equations \eqref{eq:CYCJ} yield the assertion.
\end{proof}

As a consequence of equation \eqref{eq:cycj2}, Lemma \ref{lem:g1}, Theorem \ref{LGphi} and Corollary \ref{corcjcy} we obtain the main result.
\begin{thm}\label{thm:apprphi}
Let $u=2k-1$, $\xi_2=\pi^2/4$. Then for $\xi \in (0,\xi_2)$ one has
\begin{equation}
\Phi_k(\cos^2\sqrt{\xi})=\frac{-\pi}{\xi^{1/4}(\sin\sqrt{\xi})^{1/2}}\left[Z_J(\xi)+C_YZ_Y(\xi)+C_JZ_J(\xi)\right],
\end{equation}
where $Z_Y$, $Z_J$ are given by \eqref{zyxi}, \eqref{eq:zjapprox} and
\begin{equation}
C_Y=1+O\left(\frac{1}{k}\right), \quad C_J=O\left(\frac{1}{k^2} \right).
\end{equation}

\end{thm}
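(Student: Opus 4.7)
The statement of Theorem \ref{thm:apprphi} is the payoff of the section, assembling the three preceding technical results: Lemma \ref{lem:g1}, equation \eqref{eq:cycj2}, and Corollary \ref{corcjcy}. The plan is to insert these into the representation \eqref{eq:reprphi} and to track the normalization factors carefully, so the proof is essentially a calculation.

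Concretely, I would start from \eqref{eq:reprphi}, which expresses $\Phi_k$ as $-\pi$ times the sum of ${}_2F_1(m,1-m,1;y)$ and ${}_2F_1(m,1-m,1;1-y)$. Under the change of variable $y=\sin^2(\sqrt{\xi}/2)$ one has $1-2y=\cos\sqrt{\xi}$, so that $(1-2y)^2=\cos^2\sqrt{\xi}$, matching the argument of $\Phi_k$ in the theorem. Next, by the definitions of $G_1$ and $G_2$,
\begin{equation*}
{}_2F_1(m,1-m,1;y)=\frac{G_1(y)}{\sqrt{y(1-y)}},\quad {}_2F_1(m,1-m,1;1-y)=\frac{G_2(y)}{\sqrt{y(1-y)}},
\end{equation*}
with $\sqrt{y(1-y)}=\tfrac12\sin\sqrt{\xi}$ on $\xi\in(0,\xi_2)$.

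Now I would apply Lemma \ref{lem:g1} to replace $G_1$ by $Z_J(\xi)$ modulo the Liouville--Green factor $\xi^{1/4}(\sin\sqrt{\xi})^{1/2}$, and equation \eqref{eq:cycj2} to replace $G_2$ by $C_Y Z_Y(\xi)+C_J Z_J(\xi)$ modulo the same factor. Combining everything and simplifying the $\sin\sqrt{\xi}$ powers gives
\begin{equation*}
\Phi_k(\cos^2\sqrt{\xi})=\frac{-\pi}{\xi^{1/4}(\sin\sqrt{\xi})^{1/2}}\bigl[Z_J(\xi)+C_Y Z_Y(\xi)+C_J Z_J(\xi)\bigr],
\end{equation*}
which is the stated identity. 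The quantitative information $C_Y=1+O(1/k)$ and $C_J=O(1/k^2)$ is transcribed verbatim from Corollary \ref{corcjcy}.

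I do not expect genuine obstacles here: the substantive work, namely the Liouville--Green expansions of Theorem \ref{LGphi}, the identification of $c_1,c_2$ in Lemma \ref{lem:c1c2}, the expansion of $Z_Y(\xi_2)$ and $Z_Y'(\xi_2)$ in Lemma \ref{lem:zyzyd}, and the resulting sizes in Corollary \ref{corcjcy}, is already in place. The one place where I would take care is the bookkeeping of the $\tfrac12\sin\sqrt{\xi}$ factor from $\sqrt{y(1-y)}$ against the normalization implicit in $\alpha(y)$ used to define $Z_J$ and $Z_Y$; this must produce the clean coefficient $-\pi$ (rather than $-2\pi$) and the exponent $1/2$ (rather than $3/2$) of $\sin\sqrt{\xi}$ in the denominator. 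Once that is verified, the theorem follows immediately.
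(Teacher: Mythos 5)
Your proposal matches the paper's own proof, which is precisely this assembly of \eqref{eq:reprphi}, Lemma \ref{lem:g1}, equation \eqref{eq:cycj2}, Theorem \ref{LGphi} and Corollary \ref{corcjcy}, transcribing the sizes of $C_Y,C_J$ from Corollary \ref{corcjcy}. On the one normalization point you flag: the factor $\sqrt{y(1-y)}=\frac{1}{2}\sin\sqrt{\xi}$ carried by $G_1,G_2$ is exactly cancelled by the $1/\alpha$ in $Z=G/\alpha$, since in the variable $\xi=4\arcsin^2\sqrt{y}$ one has $\alpha=(\sin\sqrt{\xi})^{1/2}/(2\xi^{1/4})$, so that $Z=\xi^{1/4}(\sin\sqrt{\xi})^{1/2}\,{}_2F_1$; that is, Lemma \ref{lem:g1} and \eqref{eq:cycj2} are to be read as relations for $\xi^{1/4}(\sin\sqrt{\xi})^{1/2}$ times the bare hypergeometric functions (consistently with how $c_0=1$ is determined from $\lim_{\xi\to0}{}_2F_1=1$ and with the use of Lemma \ref{lem:hyp} inside Lemma \ref{lem:c1c2}), which produces the coefficient $-\pi$ and the exponent $1/2$ rather than $-2\pi$ and $3/2$.
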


\subsection{Approximation of $\Psi_k$}
Next, we find a Liouville-Green approximation for the function $\Psi_k$.
With this goal, we follow the arguments of \cite[Section~5.3]{BF} with minor changes. In particular, the differential equation for $\Psi_k(x)$ is slightly different, and, therefore, one requires to recompute various functions and constants appearing in the Liouville-Green approximation. We provide all details here to make the presentation self-contained.

Consider the function \begin{equation}
y(x):=\sqrt{1-x}\Psi_k(x).
\end{equation}
Let
$u:=k-1/2$ and
\begin{equation}
f(x):=\frac{1}{x^2(1-x)}, \quad g(x):=-\frac{1}{4x^2(1-x)^2}+\frac{3}{16x(1-x)}.
\end{equation}

\begin{lem}
The function $y=y(x)$ is a solution of equation
\begin{equation}\label{eq:diffurf21}
y''(x)-(u^2f(x)+g(x))y(x)=0.
\end{equation}
\end{lem}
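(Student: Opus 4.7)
The plan is a direct computation based on the hypergeometric differential equation. Recall that $F(x) := {}_2F_1(k-1/4, k+1/4, 2k; x)$ satisfies the standard ODE
\begin{equation*}
x(1-x) F''(x) + [2k - (2k+1)x] F'(x) - (k^2 - 1/16) F(x) = 0,
\end{equation*}
since $a+b+1 = 2k+1$ and $ab = k^2-1/16$ for $a = k-1/4$, $b = k+1/4$. Since $\Psi_k(x)$ is a constant (in $x$) multiple of $w(x) := x^k F(x)$, it suffices to find the ODE satisfied by $w$, and then pass to $y = \sqrt{1-x}\, w$ (which coincides with $\sqrt{1-x}\,\Psi_k(x)$ up to a nonzero multiplicative constant).

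First I would substitute $F = x^{-k} w$ into the hypergeometric ODE, using $F' = x^{-k-1}(xw'-kw)$ and $F'' = x^{-k-2}[x^2 w'' - 2kxw' + k(k+1) w]$. After multiplication by $x^k$, careful bookkeeping of the coefficients shows that the $w'$ terms collapse to just $-xw'$ (the $2k$ pieces cancel) and the $w$ terms collapse to $k(1-k)/x + 1/16$. Dividing through by $x(1-x)$ gives
\begin{equation*}
w''(x) + P(x)\, w'(x) + Q(x)\, w(x) = 0, \qquad P(x) = -\frac{1}{1-x}, \qquad Q(x) = \frac{k(1-k)}{x^2(1-x)} + \frac{1}{16x(1-x)}.
\end{equation*}

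Next I would apply the standard reduction to Schr\"{o}dinger (normal) form. Setting $w = v\, y$ with $v'/v = -P/2$ eliminates the first-order term, yielding $v = (1-x)^{-1/2}$, i.e. $y = \sqrt{1-x}\, w$, and the new equation is
\begin{equation*}
y''(x) + \left(Q(x) - \tfrac{1}{2}P'(x) - \tfrac{1}{4}P(x)^2\right) y(x) = 0.
\end{equation*}
Here $-\tfrac{1}{2}P' = \tfrac{1}{2(1-x)^2}$ and $-\tfrac{1}{4}P^2 = -\tfrac{1}{4(1-x)^2}$, so the added term is $\tfrac{1}{4(1-x)^2}$.

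It remains to verify algebraically that
\begin{equation*}
\frac{k(1-k)}{x^2(1-x)} + \frac{1}{16 x(1-x)} + \frac{1}{4(1-x)^2} \;=\; -u^2 f(x) - g(x)
\end{equation*}
with $u = k-1/2$. Writing $-u^2 = k(1-k) - 1/4$ and clearing denominators over $16 x^2(1-x)^2$ reduces this to the polynomial identity $-4(1-x) + 4 - 3x(1-x) = x(1-x) + 4x^2$ (both equal to $x + 3x^2$), which finishes the proof. The only real subtlety is keeping the signs straight in the Schr\"{o}dinger reduction and matching conventions (the stated ODE is $y'' - (u^2 f + g)y = 0$, which is why $-u^2 f - g$ appears on the right); the rest is routine elementary computation.
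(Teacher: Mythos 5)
Your proof is correct, and it follows essentially the same route as the paper: the paper's proof simply states the normal-form ODE obtained from the hypergeometric equation via the substitutions $w=x^kF$ and $y=\sqrt{1-x}\,w$ and says "the assertion follows," while you spell out that computation explicitly (your intermediate equation $y''+\bigl(\tfrac{k(1-k)}{x^2(1-x)}+\tfrac{1}{16x(1-x)}+\tfrac{1}{4(1-x)^2}\bigr)y=0$ agrees with the paper's displayed form, and your final polynomial identity checks out). Nothing to fix.
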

\begin{proof}
Using the differential equation for the hypergeometric function, we find that $y=y(x)$ satisfies the following differential equation
\begin{equation*}
y''+\biggl( \frac{1-(2k-1)^2}{4x^2} +\frac{1}{4(1-x)^2}+\frac{5/4-(2k-1)^2}{4x(1-x)}\biggr)y=0.
\end{equation*}
The assertion follows.
\end{proof}

Making the change
\begin{equation}
Z(x):=\frac{y(x)}{\alpha(x)}, \quad \alpha(x):=\frac{(x^2-x^3)^{1/4}}{2(\artanh{\sqrt{1-x}})^{1/2}}
\end{equation}
and the substitution
\begin{equation}
\xi:=4 \artanh^2{\sqrt{1-x}},
\end{equation}
we transform equation \eqref{eq:diffurf21} to the type
\begin{equation}\label{eq:diffurZpsi}
\frac{d^2Z}{d\xi^2}+\left[ -\frac{u^2}{4\xi}+\frac{1}{4\xi^2}-\frac{\psi(\xi)}{\xi}\right]Z=0,
\end{equation}
where
\begin{equation}
\psi(\xi)=\frac{1}{16}\left(\frac{1}{\xi}-\frac{1}{4\sinh^2{\sqrt{\xi}/2}} \right)
\end{equation}
is an analytic function as $\xi \rightarrow 0$.

In order to find a Liouville-Green approximation to equation \eqref{eq:diffurZpsi}, we remove the term with $\psi(\xi)/\xi$ in \eqref{eq:diffurZpsi}. The resulting equation
\begin{equation}
Z''+\left(-\frac{u^2}{4\xi}+\frac{1}{4\xi} \right)Z=0
\end{equation}
has  $I$ and $K$ Bessel functions as solutions (see \cite[Eq.~10.13.2]{HMF}), namely 
\begin{equation}
Z_L=\sqrt{\xi}L_0(u\sqrt{\xi}),
\end{equation}
where 
\begin{equation}
L_v(x):=\begin{cases}
I_v(x)\\e^{\pi i v}K_v(x)
\end{cases}.
\end{equation}
This suggests that solutions of the original differential equation \eqref{eq:diffurZpsi} can be written in the form (see \cite[Eq.~2.09,~Chapter~12]{O}) 
\begin{equation}\label{eq:solZ}
Z_L=\sqrt{\xi}L_0(u\sqrt{\xi})\sum_{n=0}^{\infty}\frac{A(n;\xi)}{u^{2n}}+\frac{\xi}{u}L_1(u\sqrt{\xi})
\sum_{n=0}^{\infty}\frac{B(n;\xi)}{u^{2n}}.
\end{equation}

\begin{lem}
Coefficients $A(n;\xi)$ and $B(n;\xi)$ are given by
\begin{multline}\label{rec:bnxi}
\sqrt{\xi}B(n;\xi)=-\sqrt{\xi}A'(n;\xi)+\\\int_{0}^{\xi}\left(\psi(x) A(n;x)-\frac{1}{2}A'(n;x)\right)\frac{dx}{\sqrt{x}},
\end{multline}
\begin{equation}
A(n;\xi)=-\xi B'(n-1;\xi)+\int_{0}^{\xi}\psi(x)B(n-1;x)dx+\lambda_n
\end{equation}
for some real constants of integration $\lambda_n$.
\end{lem}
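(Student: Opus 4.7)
The plan is to repeat, \emph{mutatis mutandis}, the derivation of the recurrences \eqref{recurrence1}--\eqref{recurrence2} used for $\Phi_k$, with the ordinary Bessel functions $J,Y$ replaced throughout by the modified Bessel functions $I,K$. The only structural change is the sign of the $u^2/(4\xi)$ term in \eqref{eq:diffurZpsi}, which flips several sign conventions in the attendant Bessel identities but preserves the overall shape of the argument.

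Concretely, I would first derive the differential equations satisfied, as functions of $\xi$, by
\[
W(\xi) := \sqrt{\xi}\,L_0(u\sqrt{\xi}), \qquad V(\xi) := \xi\, L_1(u\sqrt{\xi}),
\]
using the modified Bessel equation together with the standard recurrences $L_0' = L_1$ and $(zL_1)' = zL_0$, which hold uniformly for both choices of $L$ in view of the phase factor $e^{\pi i v}$ in the paper's definition of $L_v$. Substituting the ansatz \eqref{eq:solZ} into \eqref{eq:diffurZpsi} and using these identities to eliminate the second derivatives of $W$ and $V$ leaves an equation of the form
\[
W(\xi)\sum_{n\geq 0}\frac{C_n(\xi)}{u^{2n}} \;+\; V(\xi)\sum_{n\geq 0}\frac{D_n(\xi)}{u^{2n-1}} \;=\; 0,
\]
where $C_n$ and $D_n$ are the natural sign-adjusted analogues of the coefficients displayed after \eqref{solution}. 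Since $W$ and $V$ are linearly independent, setting $C_n \equiv 0$ and $D_n \equiv 0$ yields a coupled pair of first-order ODEs for $A(n;\xi)$ and $B(n;\xi)$; integrating each from $0$ to $\xi$ and absorbing the arbitrary constants of integration into $\lambda_n$ produces the two asserted formulas.

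The main obstacle is sign bookkeeping: because the reduction involves modified rather than ordinary Bessel functions, the identity for $L_1''$ carries an extra sign compared with its $J_1$-counterpart, and this sign propagates through the entire calculation. As a consistency check, integration by parts of $xA''(n;x)+A'(n;x)$ against $x^{-1/2}$ shows that the asserted formula for $\sqrt{\xi}B(n;\xi)$ differs from \eqref{recurrence2} precisely by an overall sign together with the sign of $\psi$, which is exactly what the flip from $u^2/(4\xi)$ to $-u^2/(4\xi)$ predicts. Finally, integrability of all integrands near $\xi = 0$ follows from the analyticity of $\psi(\xi)$ at the origin and the inductively verified smoothness of $A(n;\cdot)$ and $B(n;\cdot)$ there.
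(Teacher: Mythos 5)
Your proposal follows essentially the same route as the paper: derive the first- and second-order relations satisfied by $W(\xi)=\sqrt{\xi}L_0(u\sqrt{\xi})$ and $V(\xi)=\xi L_1(u\sqrt{\xi})$ from the modified Bessel recurrences, substitute the ansatz \eqref{eq:solZ} into \eqref{eq:diffurZpsi}, collect the coefficients $C(n;\xi)$, $D(n;\xi)$ of $W$ and $V$ in powers of $u$, set them to zero, and integrate. Your integration-by-parts consistency check against \eqref{recurrence2} is a nice extra confirmation of the sign conventions, but the argument itself is the paper's.
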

\begin{proof}
By \cite[Eq.~10.13.2,~10.13.5,~10.36,~10.29.2,~10.29.3]{HMF} the functions
\begin{equation*}
W(\xi):=\sqrt{\xi}L_0(u\sqrt{\xi}),\quad V(\xi):=\xi L_1(u\sqrt{\xi})
\end{equation*}
 satisfy the following relations 
\begin{equation*}
W''+\left( -\frac{u^2}{4\xi}+\frac{1}{4\xi^2}\right)W=0,
\end{equation*}
\begin{equation*}
V''-\frac{1}{\xi}V'+\left( -\frac{u^2}{4\xi}+\frac{3}{4\xi^2}\right)V=0,
\end{equation*}
\begin{equation*}
V'=\frac{1}{2\xi}V+\frac{u}{2}W, \quad W'=\frac{1}{2\xi}W+\frac{u}{2\xi}V.
\end{equation*}

Using this and substituting solution \eqref{eq:solZ} into equation \eqref{eq:diffurZpsi}, we obtain that
\begin{equation*}
W(\xi)\sum_{n=0}^{\infty}\frac{C(n;\xi)}{u^{2n}}+V(\xi)\sum_{n=0}^{\infty}\frac{D(n;\xi)}{u^{2n+1}}=0,
\end{equation*}
where
\begin{equation*}
C(n;\xi)=A''(n;\xi)+\frac{A'(n;\xi)}{\xi}-\frac{\psi(\xi)}{\xi}A(n;\xi)+B'(n;\xi)+\frac{B(n;\xi)}{2\xi},
\end{equation*}
\begin{equation*}
D(n;\xi)=B''(n-1;\xi)+\frac{B'(n-1;\xi)}{\xi}-\frac{\psi(\xi)}{\xi}B(n-1;\xi)+\frac{A'(n;\xi)}{\xi}.
\end{equation*}

Setting $C(n;\xi)=D(n;\xi)=0$ we find the required recurrence relations.
\end{proof}

Let $A(0;\xi)=1$. Then
\begin{equation}
B(0;\xi)=\frac{1}{16}\left( \frac{\coth{\sqrt{\xi}}}{\sqrt{\xi}}-\frac{2}{\xi}\right),
\end{equation}

\begin{multline}
A(1;\xi)=-\frac{1}{32}\left( \frac{4}{\xi}-\frac{\coth{\sqrt{\xi/4}}}{\sqrt{\xi}}-\frac{1}{2\sinh^2{\sqrt{\xi/4}}}\right)+\\
\frac{1}{512}\left( \coth{\sqrt{\xi/4}}-\frac{2}{\sqrt{\xi}}\right)^2+\lambda_1.
\end{multline}

Note that
\begin{equation}\label{eq:baxi0}
\lim_{\xi \rightarrow \infty}\sqrt{\xi}B(0;\xi)=\frac{1}{16}, \quad \lim_{\xi \rightarrow \infty}A(1;\xi)=\frac{1}{512}+\lambda_1.
\end{equation}

The variation of the function is given by
\begin{equation}
V_{a,b}(f(x)):=\int_{a}^{b}|(f(x))'|dx.
\end{equation}
\begin{lem}\label{lem:variation}
The function $V_{\xi,\infty}(\sqrt{x}B(1;x))$
is bounded. For $n>1$ the function
$V_{\xi,\infty}(\sqrt{x}B(n;x))$
converges.
\end{lem}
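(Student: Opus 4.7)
The plan is to bound $V_{\xi,\infty}(\sqrt{x}B(n;x))=\int_\xi^\infty |(\sqrt{x}B(n;x))'|\,dx$ by estimating the integrand at both endpoints. Directly differentiating \eqref{rec:bnxi} gives
$$\bigl(\sqrt{\xi}B(n;\xi)\bigr)'=\frac{\psi(\xi)A(n;\xi)-A'(n;\xi)-\xi A''(n;\xi)}{\sqrt{\xi}},$$
so the task reduces to asymptotic control of $A(n;\xi)$ and its first two derivatives, weighted by $\psi/\sqrt{\xi}$ or $1/\sqrt{\xi}$. Two preliminary observations will be recorded: $\psi(x)$ extends analytically to $x=0$ with $\psi(0)=1/192$ (the singular parts of $\frac{1}{x}$ and $\frac{1}{4\sinh^2(\sqrt{x}/2)}$ cancel), while $\psi(x)=\frac{1}{16x}+O(e^{-\sqrt{x}})$ at infinity.

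For the base case $n=1$, I would use the explicit closed form for $A(1;\xi)$. Laurent expansion around $\xi=0$ shows that the potential $\xi^{-1}$ poles in $4/\xi$, $\coth(\sqrt{\xi}/2)/\sqrt{\xi}$, and $1/(2\sinh^2(\sqrt{\xi}/2))$ mutually cancel, and that $(\coth(\sqrt{\xi}/2)-2/\sqrt{\xi})^2=O(\xi)$, so $A(1;\xi)$ is analytic at $0$ with $A(1;0)=\lambda_1$; hence $A(1;\xi),A'(1;\xi),A''(1;\xi)=O(1)$ near $0$. At infinity, expanding in inverse powers of $\sqrt{\xi}$ (with exponentially small remainder) gives $A'(1;\xi)=O(\xi^{-3/2})$, $A''(1;\xi)=O(\xi^{-5/2})$, and $\psi(\xi)A(1;\xi)=O(\xi^{-1})$. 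Substituting into the formula above yields $(\sqrt{x}B(1;x))'=O(x^{-1/2})$ near $0$ and $O(x^{-3/2})$ at infinity, both integrable, so $V_{\xi,\infty}(\sqrt{x}B(1;x))$ stays bounded for all $\xi>0$.

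For $n>1$ the argument would proceed by induction. Each iteration of \eqref{rec:bnxi} smooths the data: provided one chooses the integration constants $\lambda_n$ so that $A(n;0)=0$ for $n\geq 2$, the functions $A(n;\xi)$ gain an extra factor of $\xi$ at the origin, which in turn upgrades the integrability of $(\sqrt{x}B(n;x))'$ near $0$ to $O(x^{n-3/2})$. At infinity the recurrence preserves the $O(\xi^{-1})$-type behavior of $\psi(\xi)A(n;\xi)$, keeping the integrand $O(x^{-3/2})$ there. Consequently $V_{\xi,\infty}(\sqrt{x}B(n;x))$ extends continuously to $\xi=0$, proving convergence of the improper integral $V_{0,\infty}(\sqrt{x}B(n;x))$.

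The main obstacle is the bookkeeping of cancellations in the base case: verifying that the apparent $\xi^{-1}$-singularities in $A(1;\xi)$ genuinely disappear, and that in the combination $\psi(\xi)A(1;\xi)-A'(1;\xi)-\xi A''(1;\xi)$ no worse singularity appears at either endpoint. The inductive improvement beyond $n=1$ then reduces to a straightforward matching of orders of vanishing at $\xi=0$ with rates of decay at infinity.
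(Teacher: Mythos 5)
Your proposal is correct and, for the base case $n=1$, essentially coincides with the paper's argument: both differentiate the recurrence \eqref{rec:bnxi} to get $(\sqrt{\xi}B(1;\xi))'=\psi(\xi)A(1;\xi)\xi^{-1/2}-A'(1;\xi)\xi^{-1/2}-\sqrt{\xi}A''(1;\xi)$ and then read off integrable behaviour at both endpoints ($O(x^{-1/2})$ at the origin after the cancellations you describe; the paper records $O(x^{-2})$ at infinity where you get $O(x^{-3/2})$ from the $\psi A/\sqrt{\xi}$ term --- either suffices). Where you genuinely diverge is the case $n>1$: the paper disposes of it in one line by noting $\psi^{(s)}(\xi)=O(|\xi|^{-s-1})$ and citing the general convergence criterion in Olver (Exercise 4.2, p.~445), whereas you run an explicit induction through the recurrence. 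Your route is more self-contained and elementary, at the cost of the bookkeeping you already flag; the paper's buys brevity and a uniform statement for all $n$ without tracking constants. Two small caveats about your version: the normalization $A(n;0)=0$ for $n\ge 2$ is a choice the paper makes only in the $\Phi_k$ section, and it is not needed here, since for $n>1$ the lemma asserts only convergence of the improper integral, which is governed by the behaviour at infinity (your claimed rate $O(x^{n-3/2})$ near $0$ is also too optimistic --- $A'(n;0)\neq 0$ in general keeps the term $A'(n;x)x^{-1/2}$ at size $x^{-1/2}$ --- but this is harmless for integrability). Neither point affects the validity of your argument.
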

\begin{proof}
As a consequence of recurrence relation \eqref{rec:bnxi}, we find
\begin{equation*}
(\sqrt{x}B(1;x))'=O(x^{-1/2}) \text{ as } x \rightarrow 0
\end{equation*}
and
\begin{equation*}
(\sqrt{x}B(1;x))'=O(x^{-2}) \text{ as } x \rightarrow \infty.
\end{equation*}
Thus $V_{\xi,\infty}(\sqrt{x}B(1;x))$ is bounded.
Note that
\begin{equation*}
\psi^{(s)}(\xi)=O\left(\frac{1}{|\xi|^{s+1}} \right).
\end{equation*}
The convergence of variation for $n>1$ then follows by \cite[Exercise~4.2,~p.~445]{O}.
\end{proof}

Using Lemma \ref{lem:variation}, we can truncate the infinite summation in \eqref{eq:solZ} up to $N$ summands with a negligible error term. The value of $N$ determines the quality of approximation: the error is smaller for larger $N$.
 
\begin{lem}\label{thm:zk}
For each value of $u$ and each nonnegative integer $N$ equation \eqref{eq:diffurZpsi} has solution $Z_K(\xi)$ which is infinitely differentiable in $\xi$ on interval $(0, \infty)$ and is given by
\begin{multline}\label{eq:zkxi}
Z_K(\xi)=\sqrt{\xi}K_0(u\sqrt{\xi})\sum_{n=0}^{N}\frac{A_K(n;\xi)}{u^{2n}}-\\\frac{\xi}{u}K_1(u\sqrt{\xi})
\sum_{n=0}^{N-1}\frac{B_K(n;\xi)}{u^{2n}}+\epsilon_{2N+1,3}(u,\xi),
\end{multline}
where
\begin{multline}
|\epsilon_{2N+1,3}(u,\xi)|\leq  \frac{\sqrt{\xi}K_0(u\sqrt{\xi})}{u^{2N+1}}\times \\ V_{\xi,\infty}(\sqrt{\xi}B_K(N;\xi))exp\left( \frac{1}{u}V_{\xi, \infty}(\sqrt{\xi}B_K(0;\xi))\right).
\end{multline}
In particular, for $N=1$
\begin{equation}
\epsilon_{3,3}(u,\xi)\ll \frac{\sqrt{\xi}K_0(u\sqrt{\xi})}{u^{3}}\min\left(\sqrt{\xi}, \frac{1}{\xi}\right).
\end{equation}
\end{lem}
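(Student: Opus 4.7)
The plan is to invoke the standard machinery of Olver's Liouville--Green method (Theorem~4.1 in Chapter~12 of \cite{O}, or equivalently Theorem~1 of \cite{BD}) applied to equation \eqref{eq:diffurZpsi}. That theorem guarantees that given the pair of ``comparison'' Bessel solutions $\sqrt{\xi}K_0(u\sqrt{\xi})$ and $\xi K_1(u\sqrt{\xi})$ of the truncated equation, one can construct a true solution of \eqref{eq:diffurZpsi} of the form \eqref{eq:zkxi}, with the coefficients $A_K(n;\xi), B_K(n;\xi)$ determined by the recurrences \eqref{rec:bnxi} and with an error term controlled by the total variation of the last computed coefficient.

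First I would select $Z_K$ to be the solution singled out by the condition of matching $\sqrt{\xi}K_0(u\sqrt{\xi})$ at infinity: since $K_0$ decays exponentially while $I_0$ grows exponentially, $K_0$ is the unique (up to scalar) recessive solution of the comparison equation at $\xi=\infty$, and Olver's construction attaches a unique solution of the perturbed equation to this choice. This simultaneously fixes the integration constants $\lambda_n$ in \eqref{rec:bnxi}, because for the construction to produce coefficients compatible with matching $K_0$ at infinity one takes $\lambda_n$ so that $\sqrt{\xi}B_K(n;\xi) \to 0$ (or has a prescribed limit) as $\xi\to\infty$, which is possible thanks to the convergence of variation in Lemma \ref{lem:variation}.

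Next I would quote the general error bound from \cite[Ch.~12, Thm.~4.1]{O}, which states precisely
\begin{equation*}
|\epsilon_{2N+1,3}(u,\xi)| \le \frac{\sqrt{\xi}K_0(u\sqrt{\xi})}{u^{2N+1}}\, V_{\xi,\infty}(\sqrt{\xi}B_K(N;\xi))\, \exp\!\left(\tfrac{1}{u}V_{\xi,\infty}(\sqrt{\xi}B_K(0;\xi))\right),
\end{equation*}
provided the variations involved are finite. Their finiteness is exactly the content of Lemma \ref{lem:variation} together with the explicit formula for $B(0;\xi)$ (from which $V_{\xi,\infty}(\sqrt{\xi}B_K(0;\xi))$ is seen to be bounded uniformly in $\xi$). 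The exponential factor is therefore $1+O(1/u)$, which is absorbed into the implicit constant.

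For the specialization to $N=1$ I would use the recurrence \eqref{rec:bnxi} to obtain the asymptotics
\begin{equation*}
(\sqrt{\xi}B_K(1;\xi))' = O(\xi^{-1/2}) \text{ as } \xi\to 0^+, \qquad (\sqrt{\xi}B_K(1;\xi))' = O(\xi^{-2}) \text{ as } \xi\to\infty,
\end{equation*}
following the same computation that proved Lemma \ref{lem:variation}. Integrating from $\xi$ to $\infty$ then yields $V_{\xi,\infty}(\sqrt{x}B_K(1;x)) \ll \sqrt{\xi}$ for small $\xi$ and $\ll 1/\xi$ for large $\xi$; combining these gives the $\min(\sqrt{\xi},1/\xi)$ factor. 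The main obstacle, and the only place where one has to be careful, is checking that Olver's hypotheses hold uniformly on the whole half--line $(0,\infty)$: near $\xi=0$ this reduces to the analyticity of $\psi(\xi)$ at the origin (already noted after \eqref{eq:diffurZpsi}), which makes the coefficients smooth across $\xi=0$; near $\xi=\infty$ it reduces exactly to the variational bounds of Lemma \ref{lem:variation}. With both checks in place, Olver's theorem applies verbatim and the lemma follows.
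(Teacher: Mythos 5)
Your overall route is exactly the one the paper intends: the lemma is stated in the paper without any written proof, as a direct appeal to Olver's theory (\cite[Chapter~12, Theorem~4.1]{O}, or \cite[Theorem~1]{BD}) for the comparison equation with modified Bessel solutions, with the hypotheses supplied by the analyticity of $\psi$ at $\xi=0$ and the variational bounds of Lemma~\ref{lem:variation}, and with $Z_K$ singled out as the solution recessive at $\xi=\infty$. Up to and including the general error bound with the factor $V_{\xi,\infty}(\sqrt{\xi}B_K(N;\xi))\exp\bigl(u^{-1}V_{\xi,\infty}(\sqrt{\xi}B_K(0;\xi))\bigr)$, your argument matches what the paper implicitly does.

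There is, however, a concrete gap in your derivation of the $N=1$ bound. From $(\sqrt{x}B_K(1;x))'=O(x^{-1/2})$ as $x\to 0^+$ and $O(x^{-2})$ as $x\to\infty$ you conclude that $V_{\xi,\infty}(\sqrt{x}B_K(1;x))\ll\sqrt{\xi}$ for small $\xi$. That integration is wrong: $\int_{\xi}^{1}x^{-1/2}\,dx=2-2\sqrt{\xi}=O(1)$, and more to the point $V_{\xi,\infty}$ is \emph{increasing} as $\xi$ decreases, tending to the full (finite, generically nonzero) total variation $V_{0,\infty}$ as $\xi\to 0^+$. So the asymptotics you quote only give $V_{\xi,\infty}(\sqrt{x}B_K(1;x))\ll\min(1,1/\xi)$, and the general error bound as displayed therefore yields $\epsilon_{3,3}\ll u^{-3}\sqrt{\xi}K_0(u\sqrt{\xi})\min(1,1/\xi)$, not the claimed $\min(\sqrt{\xi},1/\xi)$. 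The extra factor $\sqrt{\xi}$ near the regular singular point $\xi=0$ does not follow from the variation over $(\xi,\infty)$ at all; it requires a different input, e.g.\ the refined error bounds of \cite{BD} adapted to the singular endpoint (where the relevant quantity vanishes with $\xi$, as $\sqrt{\xi}B_K(1;\xi)=O(\sqrt{\xi})$ does), or a separate treatment of the solution near $\xi=0$. You should either supply that ingredient or weaken the $N=1$ statement to $\min(1,1/\xi)$ — which, incidentally, is all that is used later, since the application to $E_2(k,l)$ only invokes the theorem with $N=0$ and the exponential decay of $K_0$.
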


 As $\xi \rightarrow \infty$, differential equation \eqref{eq:diffurZpsi} has two recessive solutions, namely
\begin{equation}
Z(\xi)=
\Psi_k\left(\frac{1}{\cosh^2{\sqrt{\xi}/2}} \right)\left( \xi\sinh^2{\sqrt{\xi}}\right)^{1/4}
\end{equation}
and $Z_K(\xi)$ given by \eqref{eq:zkxi}. Thus there is $C_K=C_K(u)$ such that
\begin{equation}\label{eq:phiklim}
\Psi_k\left(\frac{1}{\cosh^2{\sqrt{\xi}/2}} \right)\left( \xi\sinh^2{\sqrt{\xi}}\right)^{1/4}=
C_KZ_K(\xi).
\end{equation}

The last step is to compute $C_K=C_K(u)$.
\begin{lem} One has
\begin{equation}\label{asymp:ck}
C_K=2+O(k^{-1}).
\end{equation}
\end{lem}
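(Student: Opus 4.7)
The plan is to determine $C_K$ by taking the limit $\xi\to\infty$ on both sides of the defining relation \eqref{eq:phiklim} and comparing leading-order asymptotics. Both sides decay like $\xi^{1/4}e^{-u\sqrt{\xi}}$, so their ratio exposes $C_K$ directly, with the $O(1/k)$ error coming from subleading Bessel and Gamma-ratio corrections.

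First, I would evaluate the left-hand side. As $\xi\to\infty$, the argument $x=1/\cosh^2(\sqrt{\xi}/2)$ tends to zero, so from the series defining $\Psi_k$ one has
\begin{equation*}
\Psi_k(x)=x^{k}\frac{\Gamma(k-1/4)\Gamma(k+1/4)}{\Gamma(2k)}\bigl(1+O(x)\bigr).
\end{equation*}
Using $\sinh\sqrt{\xi}=2\sinh(\sqrt{\xi}/2)\cosh(\sqrt{\xi}/2)$ together with $\sinh(\sqrt{\xi}/2),\cosh(\sqrt{\xi}/2)\sim e^{\sqrt{\xi}/2}/2$, one finds $(\xi\sinh^2\sqrt{\xi})^{1/4}\cosh^{-2k}(\sqrt{\xi}/2)\sim 2^{2k-1/2}\xi^{1/4}e^{-u\sqrt{\xi}}$, where $u=k-1/2$. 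Hence the left-hand side behaves like
\begin{equation*}
\frac{\Gamma(k-1/4)\Gamma(k+1/4)}{\Gamma(2k)}\cdot\frac{2^{2k}}{\sqrt{2}}\,\xi^{1/4}e^{-u\sqrt{\xi}}.
\end{equation*}

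Next, I would analyse $Z_K(\xi)$ via the $N=1$ case of Lemma \ref{thm:zk}. The Hankel asymptotic $K_v(z)\sim\sqrt{\pi/(2z)}e^{-z}$ gives $\sqrt{\xi}K_0(u\sqrt{\xi})\sim\xi^{1/4}\sqrt{\pi/(2u)}e^{-u\sqrt{\xi}}$ and the same leading exponential for the $K_1$ factor. By \eqref{eq:baxi0} we have $\sqrt{\xi}B_K(0;\xi)\to 1/16$ and $A_K(1;\xi)$ bounded as $\xi\to\infty$, while the error $\epsilon_{3,3}(u,\xi)$ in Lemma \ref{thm:zk} is $O(u^{-3})$ relative to the leading term. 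Thus the $K_1$ contribution is $O(1/u)$ smaller, the $A_K(1;\xi)/u^2$ correction is $O(1/u^2)$ smaller, and altogether
\begin{equation*}
Z_K(\xi)\sim\xi^{1/4}\sqrt{\pi/(2u)}\,e^{-u\sqrt{\xi}}\bigl(1+O(1/u)\bigr).
\end{equation*}

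Equating the two asymptotics yields
\begin{equation*}
C_K=\frac{\Gamma(k-1/4)\Gamma(k+1/4)}{\Gamma(2k)}\cdot\frac{2^{2k}}{\sqrt{2}}\cdot\sqrt{\frac{2u}{\pi}}\bigl(1+O(1/k)\bigr).
\end{equation*}
To simplify, I would apply the Gauss duplication formula at $z=k-1/4$, giving
\begin{equation*}
\Gamma(k-1/4)\Gamma(k+1/4)=\frac{2\sqrt{2\pi}}{2^{2k}}\,\Gamma(2k-1/2),
\end{equation*}
combine with $\Gamma(2k-1/2)/\Gamma(2k)=(2k)^{-1/2}(1+O(1/k))$ via \cite[Eq.~5.11.13]{HMF}, and use $u=k(1+O(1/k))$. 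All factors of $2^{2k}$, $\sqrt{\pi}$, and $\sqrt{k}$ cancel, leaving $C_K=2+O(1/k)$. The main obstacle is purely bookkeeping: making sure that the Hankel corrections for $K_0$ and $K_1$, the rates in \eqref{eq:baxi0}, and the Gamma-ratio expansion combine into a single clean $O(1/k)$ error; the explicit bounds already recorded in \eqref{eq:baxi0} and Lemma \ref{thm:zk} guarantee this.
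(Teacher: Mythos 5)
Your proposal is correct and follows essentially the same route as the paper: both determine $C_K$ by letting $\xi\to\infty$ in \eqref{eq:phiklim}, matching the leading $\xi^{1/4}e^{-u\sqrt{\xi}}$ asymptotics of $\Psi_k$ against the Hankel expansion of $K_0$, $K_1$ together with the limits \eqref{eq:baxi0}, and then simplifying the Gamma-factor via the duplication formula. The only cosmetic difference is that you apply duplication at $z=k-1/4$ directly, whereas the paper first replaces $\Gamma(k-1/4)\Gamma(k+1/4)$ by $\Gamma^2(k)(1+O(1/k))$; the two are equivalent.
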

\begin{proof}
To determine $C_K$, we compute the limit  of the left and right- hand sides of equation \eqref{eq:phiklim} as $\xi \rightarrow \infty$. This implies
\begin{equation*}\label{eq:ckexpl}
C_K=\frac{\Gamma(k-1/4)\Gamma(k+1/4)}{\Gamma(2k)}\frac{2^{2k}\sqrt{u}}{\sqrt{\pi}}\left[\sum_{n=0}^{N}\frac{a_n}{u^{2n}}-\sum_{n=0}^{N-1}\frac{b_n}{u^{2n+1}}\right]^{-1},
\end{equation*}
where
\begin{equation*}
a_n=\lim_{\xi \rightarrow \infty}A(n;\xi), \quad b_n=\lim_{\xi \rightarrow \infty}B(n;\xi)\sqrt{\xi}.
\end{equation*}
According to \eqref{eq:baxi0} we know that
\begin{equation*}
a_0=1, \quad a_1=\frac{1}{512}+\lambda_1, \quad b_0=\frac{1}{16}.
\end{equation*}
Furthermore,
\begin{equation*}
\frac{\Gamma(k-1/4)\Gamma(k+1/4)}{ \Gamma(2k)}= \frac{\Gamma^2(k)}{\Gamma(2k)} (1+O(1/k))=\frac{2\sqrt{\pi}}{\sqrt{k}2^{2k}}(1+O(1/k)).
\end{equation*}
The assertion follows.
\end{proof}

Finally, we obtain the main Theorem.
\begin{thm}\label{thm:approxPsi}
For  $\xi \in (0, \infty)$ the following equality holds
\begin{equation}\label{eq:phiklim1}
\Psi_k\left(\frac{1}{\cosh^2{\sqrt{\xi}/2}} \right)\left( \xi\sinh^2{\sqrt{\xi}}\right)^{1/4}=
C_KZ_K(\xi),
\end{equation}
where $Z_K(\xi)$ is defined by \eqref{eq:zkxi} and $C_K=2+O(k^{-1})$.
\end{thm}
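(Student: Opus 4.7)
The plan is to assemble the pieces already prepared in Section 6.3 and observe that the theorem is essentially a compact restatement of the combined content of equation \eqref{eq:phiklim} and the preceding lemma asserting $C_K = 2 + O(k^{-1})$. The conceptual input is the standard Liouville-Green uniqueness principle: two recessive solutions of a second-order linear ODE on a half-line must agree up to a scalar.

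First I would recall that $y(x)=\sqrt{1-x}\,\Psi_k(x)$ is a solution of \eqref{eq:diffurf21}, and that under the change of variable $\xi = 4\artanh^2\sqrt{1-x}$ with prefactor $\alpha(x) = (x^2-x^3)^{1/4}/(2(\artanh\sqrt{1-x})^{1/2})$, the transformed function $Z = y/\alpha$ satisfies the normal form \eqref{eq:diffurZpsi}. Direct computation shows that the left-hand side of \eqref{eq:phiklim1} equals $Z(\xi)/\alpha$ expressed back in the $\xi$-variable, while by Lemma \ref{thm:zk} the function $Z_K(\xi)$ in \eqref{eq:zkxi} is another solution of the same equation. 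As $\xi\to\infty$ both functions decay: $Z_K(\xi)$ inherits the exponential decay of $K_0(u\sqrt\xi)$ and $K_1(u\sqrt\xi)$, while the hypergeometric series defining $\Psi_k$ is evaluated at $1/\cosh^2(\sqrt\xi/2) \to 0$, where $\Psi_k$ vanishes like $x^k$. Hence both are recessive at infinity, and since the recessive solution is unique up to a scalar there must exist $C_K = C_K(u)$ with \eqref{eq:phiklim1}.

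The one non-trivial step is computing the value of $C_K$, which is the content of the preceding lemma and gives $C_K = 2+O(k^{-1})$. To do it from scratch I would match the $\xi\to\infty$ asymptotics of both sides of \eqref{eq:phiklim1}. On the left, $\Psi_k(x) \sim x^k\,\Gamma(k-1/4)\Gamma(k+1/4)/\Gamma(2k)$ combines with the prefactor $(\xi\sinh^2\sqrt\xi)^{1/4}$ using $1/\cosh^2(\sqrt\xi/2) = 4e^{-\sqrt\xi}(1+o(1))$ and $\sinh\sqrt\xi = e^{\sqrt\xi}/2(1+o(1))$. On the right, the Hankel-type asymptotic $K_0(u\sqrt\xi)\sim \sqrt{\pi/(2u\sqrt\xi)}\,e^{-u\sqrt\xi}$ combined with the explicit limits $a_0=\lim_{\xi\to\infty}A_K(0;\xi)=1$ and $b_0=\lim_{\xi\to\infty}\sqrt\xi\,B_K(0;\xi)=1/16$ from \eqref{eq:baxi0} produces the leading contribution. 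The matching reduces to the identity
\[
C_K = \frac{\Gamma(k-1/4)\Gamma(k+1/4)}{\Gamma(2k)}\cdot\frac{2^{2k}\sqrt{u}}{\sqrt\pi}\,\bigl(1+O(k^{-1})\bigr),
\]
and Stirling's formula (or the duplication identity) gives $\Gamma(k-1/4)\Gamma(k+1/4)/\Gamma(2k) = 2\sqrt\pi/(k^{1/2}2^{2k})(1+O(k^{-1}))$, yielding $C_K = 2 + O(k^{-1})$.

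The main obstacle is not the existence of the identity \eqref{eq:phiklim1}, which follows at once from the uniqueness of the recessive solution, but the bookkeeping of the asymptotic coefficients to pin down $C_K$. In particular one has to verify that the exponential factors $2^{2k}$ produced by the Gamma quotient cancel against the exponential growth implicit in $\sqrt{\xi}\,K_0(u\sqrt\xi)\,e^{u\sqrt\xi}$, and that the subleading terms from the $B_K(0;\xi)$-series contribute only at order $O(k^{-1})$. Once that asymptotic matching is done, the theorem is immediate.
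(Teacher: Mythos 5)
Your proposal follows the paper's own argument essentially verbatim: identify the left-hand side as a recessive solution of \eqref{eq:diffurZpsi} at $\xi\to\infty$, invoke uniqueness of the recessive solution to get a constant $C_K$, and pin it down by matching the $\xi\to\infty$ asymptotics using $a_0=1$, $b_0=1/16$ from \eqref{eq:baxi0} together with the duplication-formula estimate $\Gamma(k-1/4)\Gamma(k+1/4)/\Gamma(2k)=2\sqrt{\pi}\,k^{-1/2}2^{-2k}(1+O(1/k))$. The approach and all key computations coincide with those in the paper, so nothing further is needed.
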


\section{Asymptotic formula}

Corollaries \ref{thm:asympformula2} and \ref{thm:asympformula} are derived from Theorem \ref{thm:explicitformula} by estimating the last two summands in exact formula \eqref{mainformula}, as we now show.

\begin{lem}
For any $\epsilon>0$ one has
\begin{equation}
E_1(k,l):=\frac{1}{\sqrt{l}}\sum_{1\leq n<2l}\mathscr{L}_{n^2-4l^2}(1/2)\Phi_k\left( \frac{n^2}{4l^2}\right)\ll
\frac{l^{5/6+\epsilon}}{\sqrt{k}}.
\end{equation}
\end{lem}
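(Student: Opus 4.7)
The plan is to combine the Liouville-Green approximation for $\Phi_k$ (Theorem \ref{thm:apprphi}) with the subconvexity bound (Lemma \ref{lem:subconvexity}), and then carry out an elementary sum. First I would parametrize $n^2/(4l^2) = \cos^2\sqrt{\xi}$ with $\xi\in(0,\pi^2/4)$, so that Theorem \ref{thm:apprphi} applies directly and gives
\begin{equation*}
\Phi_k\!\left(\frac{n^2}{4l^2}\right) \;=\; \frac{-\pi}{\xi^{1/4}(\sin\sqrt{\xi})^{1/2}}\bigl[(1+C_J)Z_J(\xi)+C_Y Z_Y(\xi)\bigr],
\end{equation*}
with $C_Y=1+O(1/k)$ and $C_J=O(1/k^2)$.

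Next I would insert the standard Bessel function bounds into the formulas \eqref{zyxi} and \eqref{eq:zjapprox}. For $u\sqrt{\xi}\gg 1$ the Hankel expansion gives $|J_0(u\sqrt{\xi})|,|J_1(u\sqrt{\xi})|,|Y_0(u\sqrt{\xi})|,|Y_1(u\sqrt{\xi})|\ll (u\sqrt{\xi})^{-1/2}$, so the leading terms of $Z_J(\xi)$ and $Z_Y(\xi)$ are $\ll \xi^{1/4}/\sqrt{u}$, and the error terms from Theorem \ref{LGphi} are of smaller order; for $u\sqrt{\xi}\ll 1$ a direct power-series check (together with $Y_0$'s logarithmic growth being absorbed by the $1/\sqrt{k}$-saving after summing over the very short range $n\sim 2l$) yields the same effective bound. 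Thus uniformly
\begin{equation*}
\Phi_k\!\left(\frac{n^2}{4l^2}\right)\;\ll\;\frac{1}{\sqrt{k\sin\sqrt{\xi}}}\;\ll\;\frac{\sqrt{l}}{\sqrt{k}\,(4l^2-n^2)^{1/4}},
\end{equation*}
since $\sin\sqrt{\xi}=\sqrt{4l^2-n^2}/(2l)$.

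With this pointwise bound, I would apply Lemma \ref{lem:subconvexity} to obtain $\mathscr{L}_{n^2-4l^2}(1/2)\ll (4l^2-n^2)^{1/6+\epsilon}$ for each $n<2l$, yielding
\begin{equation*}
E_1(k,l)\;\ll\;\frac{1}{\sqrt{l}}\sum_{1\leq n<2l}(4l^2-n^2)^{1/6+\epsilon}\cdot\frac{\sqrt{l}}{\sqrt{k}\,(4l^2-n^2)^{1/4}}\;=\;\frac{1}{\sqrt{k}}\sum_{1\leq n<2l}(4l^2-n^2)^{-1/12+\epsilon}.
\end{equation*}
Writing $m=2l-n$ so that $4l^2-n^2=m(4l-m)\asymp lm$ (for $1\le m\le l$, and symmetrically otherwise), the remaining sum evaluates to
\begin{equation*}
\sum_{m=1}^{2l-1}(lm)^{-1/12+\epsilon}\;\asymp\;l^{-1/12+\epsilon}\cdot l^{11/12+\epsilon}\;=\;l^{5/6+2\epsilon},
\end{equation*}
which is exactly the claimed bound $l^{5/6+\epsilon}/\sqrt{k}$ after renaming $\epsilon$.

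The main obstacle is to verify that the Liouville-Green bound $\Phi_k(n^2/(4l^2))\ll \sqrt{l/k}\,(4l^2-n^2)^{-1/4}$ holds \emph{uniformly} across the full range $1\leq n<2l$, in particular near the turning point $n\approx 2l$ where $u\sqrt{\xi}$ can become small and $Y_0$ develops a logarithmic singularity. This range is tiny (it contributes only $n$ with $2l-n\lesssim l/k^2$) so even a crude bound there is absorbed by the dominant $l^{5/6+\epsilon}$ estimate; I would dispose of this endpoint region separately using the trivial bound $\Phi_k(x)\ll 1$ obtained from \eqref{eq:decompphi} together with $|{}_2F_1(m,1-m,1;y)|\ll 1$ for $y$ bounded away from $1/2$, and merge the two estimates to conclude.
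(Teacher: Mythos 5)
Your proposal follows essentially the same route as the paper: Liouville--Green approximation of $\Phi_k$ via Theorem \ref{thm:apprphi}, the Bessel decay $Y_0(u\sqrt{\xi})\ll(u\sqrt{\xi})^{-1/2}$ in the oscillatory range giving $\Phi_k(n^2/4l^2)\ll \sqrt{l}\,k^{-1/2}(4l^2-n^2)^{-1/4}$, the subconvexity bound \eqref{eq:subconvexity}, and the same elementary summation producing $l^{5/6+\epsilon}k^{-1/2}$; the paper merely works in the reindexed variable $2l-n$ and restricts (implicitly) to $l\ll k^2$, in which case your turning-point range $2l-n\lesssim l/k^2$ is empty. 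The one flaw is in your endpoint treatment: the claim that $|{}_2F_1(m,1-m,1;y)|\ll 1$ for $y$ bounded away from $1/2$, hence $\Phi_k(x)\ll 1$, is false in exactly the regime you need it. For $n$ near $2l$ the second hypergeometric in \eqref{eq:decompphi} has argument $(1+\sqrt{x})/2$ near $1$, and since the parameters satisfy $c-a-b=0$ it diverges logarithmically there; more precisely ${}_2F_1(m,1-m,1;1-y)=(-\log y+2\psi(1)-2\psi(m))\,{}_2F_1(m,1-m,1;y)+\cdots$, so the correct ``trivial'' bound is $\Phi_k(x)\ll\log(kl)$. This still suffices, since the endpoint range contributes at most $l/k^2$ terms each of size $\ll l^{-1/2}(l^2/k^2)^{1/6+\epsilon}\log(kl)$, which is $\ll l^{5/6+\epsilon}k^{-1/2}$, but the justification as written needs this correction.
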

\begin{proof}
Using subconvexity bound \eqref{eq:subconvexity} we obtain
\begin{multline*}
E_1(k,l)\ll \frac{l^{\epsilon}}{\sqrt{l}}\sum_{1\leq n<2l}(2l-n)^{1/6}(2l+n)^{1/6}
\left|\Phi_k\left( \frac{n^2}{4l^2}\right)\right|\ll\\
\frac{l^{1/6+\epsilon}}{\sqrt{l}}\sum_{1\leq n<2l}n^{1/6}
\left|\Phi_k\left(\left(1- \frac{n}{2l}\right)^2\right)\right|
\end{multline*}

Let
\begin{equation*}
\xi=4\left(\arcsin{\sqrt{\frac{n}{4l}}} \right)^2, \quad u=2k-1.
\end{equation*}
Then by Theorem \ref{thm:apprphi} one has
\begin{multline*}
\Phi_k\left(\left(1- \frac{n}{2l}\right)^2\right)\ll \frac{\sqrt{\xi}Y_0(u\sqrt{\xi})}{(\arcsin(\sqrt{n/4l}))^{1/2}(n/l)^{1/4}}\ll\\
\frac{(\arcsin(\sqrt{n/4l}))^{1/2}}{(n/l)^{1/4}}Y_0(u\sqrt{\xi}).
\end{multline*}
If $l\ll k^2$, one has $u\sqrt{\xi}\gg 1$. Then the estimate for the Bessel function
\begin{equation*}
Y_0(u\sqrt{\xi})\ll \frac{1}{u^{1/2}\xi^{1/4}}
\end{equation*}
yields
\begin{equation*}
\Phi_k\left(\left(1- \frac{n}{2l}\right)^2\right)\ll\frac{1}{k^{1/2}(n/l)^{1/4}}.
\end{equation*}
Consequently,
\begin{equation*}
E_1(k,l)\ll \frac{l^{1/6+\epsilon}}{\sqrt{l}}\sum_{n<2l}\frac{n^{1/6}l^{1/4}}{k^{1/2}n^{1/4}}\ll
\frac{l^{5/6+\epsilon}}{\sqrt{k}}.
\end{equation*}
\end{proof}

\begin{cor}
If the Lindel\"{o}f hypothesis for Dirichlet L-functions is true, then for any $\epsilon>0$ one has
\begin{equation}
E_1(k,l)\ll \frac{l^{1/2+\epsilon}}{\sqrt{k}}.
\end{equation}

\end{cor}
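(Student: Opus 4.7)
The plan is to redo the proof of the preceding lemma, substituting the Lindel\"of-hypothesis bound \eqref{eq:lindelof} for the unconditional subconvexity bound \eqref{eq:subconvexity}. Since $|n^2 - 4l^2| = (2l-n)(2l+n) \leq 4l^2$ for $1 \leq n < 2l$, I would obtain $\mathscr{L}_{n^2-4l^2}(1/2) \ll l^{\epsilon}$ directly, without the polynomial factor $(2l-n)^{1/6}(2l+n)^{1/6}$ that drove the exponent $5/6$ in the unconditional argument.

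Pulling this uniform bound out of the sum defining $E_1(k,l)$ and applying the change of variables $n \mapsto 2l - n$ exactly as before, the task reduces to estimating
\[
E_1(k,l) \ll \frac{l^{\epsilon}}{\sqrt{l}} \sum_{1 \leq n < 2l} \left|\Phi_k\left(\left(1 - \frac{n}{2l}\right)^{2}\right)\right|.
\]
I would then reuse, verbatim, the estimate
\[
\Phi_k\left(\left(1-\frac{n}{2l}\right)^{2}\right) \ll \frac{l^{1/4}}{k^{1/2}\, n^{1/4}},
\]
established in the preceding lemma from Theorem \ref{thm:apprphi} together with the Hankel asymptotic $Y_0(u\sqrt{\xi}) \ll (u\sqrt{\xi})^{-1/2}$ (valid for $l \ll k^2$, since then $u\sqrt{\xi} \gg 1$). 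Combining this with $\sum_{1 \leq n < 2l} n^{-1/4} \ll l^{3/4}$ yields
\[
E_1(k,l) \ll \frac{l^{\epsilon}}{\sqrt{l}} \cdot \frac{l^{1/4}}{\sqrt{k}} \cdot l^{3/4} = \frac{l^{1/2+\epsilon}}{\sqrt{k}},
\]
as claimed.

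No real obstacle arises here: the Liouville--Green analysis of $\Phi_k$ is invoked purely as a black box, and the argument is a bookkeeping variant of the preceding lemma. The improvement from $l^{5/6+\epsilon}$ to $l^{1/2+\epsilon}$ corresponds precisely to the saving of $l^{1/3}$ obtained by dropping the two $l^{1/6}$-type factors — one from the pointwise bound on $\mathscr{L}_d(1/2)$, and one from replacing $\sum n^{-1/12}$ by $\sum n^{-1/4}$ in the final summation.
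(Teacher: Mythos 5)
Your proposal is correct and is precisely the argument the paper intends: the corollary is stated without separate proof because it follows from the preceding lemma verbatim, with the Lindel\"of bound \eqref{eq:lindelof} replacing the subconvexity bound \eqref{eq:subconvexity}, so that the factor $(2l-n)^{1/6}(2l+n)^{1/6}$ disappears and $\sum_{n<2l} n^{-1/4} \ll l^{3/4}$ gives the stated saving. Your bookkeeping (the two $l^{1/6}$-type losses being removed) matches the paper's computation exactly, including the reuse of the $\Phi_k$ estimate from Theorem \ref{thm:apprphi} under the same condition $l \ll k^2$.
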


\begin{lem}
For some $c>0$ one has
\begin{multline}
E_2(k,l):=\frac{1}{l\sqrt{2}}\sum_{n>2l}\mathscr{L}_{n^2-4l^2}(1/2)\sqrt{n}\Psi_k\left( \frac{4l^2}{n^2}\right)\ll\\
\frac{l^{-1/12}}{\sqrt{k}}\exp\left(-\frac{ck}{\sqrt{l}} \right).
\end{multline}
\end{lem}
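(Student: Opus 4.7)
The plan is to invoke Theorem~\ref{thm:approxPsi} together with the subconvexity bound \eqref{eq:subconvexity}. For $n>2l$, set $\sqrt{\xi}=2\acosh(n/(2l))$, so that $1/\cosh^{2}(\sqrt{\xi}/2)=4l^{2}/n^{2}$. Theorem~\ref{thm:approxPsi} then expresses
\[
\Psi_{k}\!\left(\frac{4l^{2}}{n^{2}}\right)=\frac{C_{K}Z_{K}(\xi)}{(\xi\sinh^{2}\sqrt{\xi})^{1/4}},\qquad C_{K}=2+O(1/k),
\]
whose leading part is $\sqrt{\xi}\,K_{0}(u\sqrt{\xi})/(\xi\sinh^{2}\sqrt{\xi})^{1/4}$ with $u=2k-1$. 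The standard uniform estimate $K_{0}(y)\ll y^{-1/2}e^{-y}$ then yields
\[
\Psi_{k}\!\left(\frac{4l^{2}}{n^{2}}\right)\ll \frac{e^{-u\sqrt{\xi}}}{\sqrt{u\sinh\sqrt{\xi}}}.
\]

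Next I would apply \eqref{eq:subconvexity} in the form $\mathscr{L}_{n^{2}-4l^{2}}(1/2)\ll(n^{2}-4l^{2})^{1/6+\epsilon}$ and parametrize by $n=2l+m$, $m\geq 1$. In the main regime $1\leq m\leq l$, the expansion $\acosh(1+x)=\sqrt{2x}(1+O(x))$ as $x\to 0^{+}$ gives $\sqrt{\xi}\asymp\sqrt{m/l}$ and $\sinh\sqrt{\xi}\asymp\sqrt{m/l}$, while $n^{2}-4l^{2}=m(4l+m)\asymp lm$. Substituting, the $m$-th term of $E_{2}(k,l)$ is bounded by
\[
\frac{\sqrt{l}}{l}(lm)^{1/6+\epsilon}\frac{l^{1/4}}{\sqrt{k}\,m^{1/4}}e^{-c_{1}k\sqrt{m/l}}\ll \frac{l^{-1/12+\epsilon}}{\sqrt{k}}\,m^{-1/12+\epsilon}\,e^{-c_{1}k\sqrt{m/l}}.
\]
In the tail regime $m>l$ one has $\sqrt{\xi}\gg\log(n/l)$, so $e^{-u\sqrt{\xi}}\ll(l/n)^{2u}$ furnishes superpolynomial decay in $n$, making the contribution of this regime negligible compared with the $m=1$ term.

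Finally, summing over $m$, the minimum of $e^{-c_{1}k\sqrt{m/l}}$ is attained at $m=1$ and equals $e^{-c_{1}k/\sqrt{l}}$; factoring it out, the residual $\sum_{m\geq 1}m^{-1/12+\epsilon}e^{-c_{1}k(\sqrt{m}-1)/\sqrt{l}}$ converges to $O(1)$ geometrically. This would produce the asserted bound with $c=c_{1}/2$, say. The main obstacle is uniformity: one must check that the lower-order terms in the expansion of $Z_{K}(\xi)$ and the remainder $\epsilon_{3,3}$ from Lemma~\ref{thm:zk} inherit the same exponential factor $e^{-u\sqrt{\xi}}$, and carefully track the polynomial factors through both regimes so that the net $l$-dependence is precisely $l^{-1/12+\epsilon}$.
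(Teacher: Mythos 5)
Your proposal is correct and follows essentially the same route as the paper: subconvexity for $\mathscr{L}_{n^2-4l^2}(1/2)$, the Liouville--Green approximation of Theorem~\ref{thm:approxPsi} with the leading term $\sqrt{\xi}K_0(u\sqrt{\xi})$, and the exponential decay of $K_0$, with the only cosmetic difference that you estimate the sum termwise via $n=2l+m$ where the paper passes to an integral through the substitution $x=2l\cosh(\sqrt{\xi}/2)$. One trivial slip: in the $\Psi_k$ approximation the paper's parameter is $u=k-1/2$ rather than $u=2k-1$, which only rescales the unspecified constant $c$.
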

\begin{proof}
It follows from subconvexity bound \eqref{eq:subconvexity} that
\begin{multline*}
E_2(k,l)\ll \frac{1}{l}\sum_{n>2l}(n^2-4l^2)^{1/6}\sqrt{n}\left|\Psi_k\left( \frac{4l^2}{n^2}\right)\right|\ll\\
\frac{1}{l}\int_{2l+1}^{\infty}x^{1/2}(x^2-4l^2)^{1/6}\left|\Psi_k\left( \frac{4l^2}{x^2}\right)\right|dx+
l^{-1/3}\left| \Psi_k\left( \frac{4l^2}{(2l+1)^2}\right)\right|.
\end{multline*}
Next, we make the change of variables $$x=2l\cosh{\frac{\sqrt{\xi}}{2}} $$  and estimate $E_2(k,l)$ using  Theorem \ref{thm:approxPsi} with $N=0$.
Consider the first summand 
\begin{multline*}
E_{2,1}(k,l):=\frac{1}{l}\int_{2l+1}^{\infty}x^{1/2}(x^2-4l^2)^{1/6}\left|\Psi_k\left( \frac{4l^2}{x^2}\right)\right|dx\ll
l^{5/6}\times \\\int_{\xi_0}^{\infty}\left(\sinh{\frac{\sqrt{\xi}}{2}}\right)^{5/6}\frac{|Z_k(\xi)|}{\xi^{3/4}}d\xi\ll 
l^{5/6}\int_{\xi_0}^{\infty}\left(\sinh{\frac{\sqrt{\xi}}{2}}\right)^{5/6}\frac{|K_0(u\sqrt{\xi})|}{\xi^{1/4}}d\xi,
\end{multline*}
where $u=k-1/2$ and the limit of integration $\xi_0$ is defined by $$\cosh{\frac{\sqrt{\xi_0}}{2}}=1+\frac{1}{2l}.$$

Making the change of variables
$\sqrt{\xi}=t$, one has $$t_0=4\arcsinh{\frac{1}{\sqrt{4l}}}.$$
Since $t_0 \gg 1/\sqrt{l}$ and $ut\geq ut_0\gg k/\sqrt{l}\gg 1$, we estimate the Bessel function as follows
\begin{equation*}
K_0(ut)\ll \frac{\exp(-ut)}{\sqrt{ut}}.
\end{equation*}
Finally,
\begin{multline*}
E_{2,1}(k,l)\ll 
l^{5/6}\int_{t_0}^{\infty}\left(\sinh{\frac{t}{2}}\right)^{5/6}\frac{|K_0(ut)|}{\sqrt{t}}tdt\ll\\
\frac{l^{5/6}}{\sqrt{k}}\int_{t_0}^{\infty}\left(\sinh{\frac{t}{2}}\right)^{5/6}e^{-ut}dt\ll
\frac{l^{5/6}}{\sqrt{k}}\times \\\left( \int_{t_0}^{1}t^{5/6}\exp{(-ut)}dt+\int_{1}^{\infty}\exp{(-ut+5t/12)}dt\right)
\ll \frac{l^{5/12}}{u\sqrt{k}}\exp(-ut_0).
\end{multline*}
The second summand can be estimated similarly:
\begin{multline*}
E_{2,2}(k,l):=l^{-1/3}\left| \Psi_k\left( \frac{4l^2}{(2l+1)^2}\right)\right|\ll
l^{-1/3}\frac{C_K|Z_k(\xi)|}{\xi^{1/4}(\sinh{\sqrt{\xi}})^{1/2}}\ll\\
l^{-1/3}l^{1/2}|Z_K(\xi)|\ll l^{1/6}\sqrt{\xi}|K_0(u\sqrt{\xi})|\ll \frac{\exp{(-ut_0)}}{k^{1/2}l^{1/12}}.
\end{multline*}
To sum up,
\begin{equation*}
E_{2}(k,l)\ll E_{2,1}(k,l)+E_{2,2}(k,l)\ll \frac{1}{l^{1/12}\sqrt{k}}\exp\left(\frac{-ck}{\sqrt{l}}\right)
\end{equation*}
for some $c>0$.
\end{proof}
\nocite{}

\end{document}